\newtheorem{thm}{Theorem}[section]
\newtheorem{claim}[thm]{Claim}
\newtheorem{conj}[thm]{Conjecture}
\newtheorem{lem}[thm]{Lemma}
\newtheorem{prop}[thm]{Proposition}
\newtheorem{q}[thm]{Question}
\newtheorem{defin}[thm]{Definition}
\def\s{{\mathfrak s}}
\def\t{{\mathfrak t}}
\def\B{{\mathcal B}}
\def\L{{\mathbb L}}
\def\Q{{\mathbb Q}}
\def\Z{{\mathbb Z}}
\def\sg1{{\sigma_1}}
\def\sg2{{\sigma_2}}
\def\Char{{\mathrm{Char}}}
\def\Hom{{\mathrm{Hom}}}
\def\spc{{\mathrm{spin^c}}}
\def\Spc{{\mathrm{Spin^c}}}
\def\cok{{\mathrm{coker}}}
\def\del{{\partial}}
\def\mod{{\textup{mod} \;}}
\def\rk{{\mathrm{rk}}}
\newcommand{\into}{\hookrightarrow}
\newcommand{\onto}{\twoheadrightarrow}
\begin{document}

\title[On closed 3-braids with unknotting number one]%
{On closed 3-braids with unknotting number one}

\author[Joshua Greene]{Joshua Greene}

\address{Department of Mathematics, Princeton University\\ Princeton, NJ 08542}

\email{jegreene@math.princeton.edu}

\thanks{Partially supported by an NSF Graduate Fellowship.}

\begin{abstract}
We prove that if an alternating 3-braid knot has unknotting number one, then there must exist an unknotting crossing in any alternating diagram of it, and we enumerate such knots.  The argument combines the obstruction to unknotting number one developed by Ozsv\'ath and Szab\'o using Heegaard Floer homology, together with one coming from Donaldson's Theorem A.
\end{abstract}
\maketitle


\section{Introduction.}

The unknotting number of a classical knot $K \subset S^3$, denoted $u(K)$, is defined to be the minimum number of crossing changes needed to obtain the unknot from some diagram of $K$.  In spite of its simple definition, this invariant is notoriously difficult to compute.  Case in point: the value $u(8_{10})=2$ was unknown until 2004 \cite{OSunknotting}!  A classical lower bound involves the knot signature: $|\sigma(K)| \leq 2 u(K)$ \cite{Msig}.  Recent developments in Heegaard Floer homology and Khovanov homology have led to noteworthy progress in estimating $u(K)$ for some interesting classes of knots.  For instance, both the knot Floer and Khovanov homology theories produce a {\em concordance invariant} which provides a lower bound on the unknotting number, and indeed on the {\em slice genus}, of a knot.  In particular, Rasmussen used the concordance invariant $s$ he defined in Khovanov homology to give a combinatorial (gauge theory free) proof of the Milnor conjecture, which implies that the unknotting number of the $(p,q)$-torus knot is $(p-1)(q-1)/2$ \cite{R}.

The current work was motivated out of interest in the following conjecture of Kohn \cite[Conjecture 12]{Kohn}.

\begin{conj}\label{conj: Kohn}

If $K$ is a knot with unknotting number one, then there exists a minimal crossing diagram of $K$ which contains an unknotting crossing.

\end{conj}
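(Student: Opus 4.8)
The plan is to prove Kohn's conjecture by translating the hypothesis $u(K)=1$ into a statement about the double branched cover $\Sigma(K)$, and then combining the two obstructions highlighted in the abstract with a diagrammatic realization argument. The starting point is that a single crossing change to the unknot is recorded by an embedded \emph{crossing disk}: a disk $\Delta$ meeting $K$ transversally in two points of opposite sign, such that giving $K$ a full twist along $\partial\Delta$ produces the unknot. Equivalently, by the theorem of Montesinos, $\Sigma(K)$ is obtained by half-integer surgery $S^3_{\det(K)/2}(\kappa)$ on a knot $\kappa\subset S^3$. Under this dictionary an unknotting crossing in a diagram $D$ is precisely a crossing whose crossing disk is isotopic to $\Delta$, so the conjecture asks us to realize $\Delta$ as the crossing disk of some crossing in a diagram with $c(K)$ crossings.

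First I would use the obstruction machinery to constrain $K$. The Ozsv\'ath--Szab\'o obstruction compares the correction terms $d(\Sigma(K),\s)$ across $\spc$ structures $\s$, while Donaldson's Theorem A forces the intersection lattice of any negative-definite four-manifold bounding $\Sigma(K)$ --- here the one obtained by capping off the surgery trace of $\kappa$ --- to embed in the standard diagonal lattice. Together these cut the candidate knots down to a structured list indexed by the allowed lattice embeddings and correction-term profiles, and crucially they do so without reference to any particular diagram of $K$.

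Next I would carry out the realization step. For each surviving knot I would build a minimal-crossing diagram and exhibit within it a crossing whose crossing disk matches $\Delta$. In the \emph{alternating} case this is where the argument gains real traction: by the Tait flyping theorem of Menasco--Thistlethwaite, all reduced alternating diagrams of $K$ are related by flypes, so it suffices to locate an unknotting crossing in one reduced alternating diagram and then verify that flyping preserves this feature. The signature bound $|\sigma(K)|\le 2u(K)=2$ together with the Goeritz / Gordon--Litherland form of the diagram supplies the bridge between the lattice data of the previous step and the combinatorics that pins down the crossing.

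The main obstacle is the realization step \emph{outside} the alternating world. For a general knot the obstructions need not reduce $K$ to a finite, diagrammatically tractable list, and --- more seriously --- minimal-crossing diagrams of non-alternating knots enjoy no analogue of flype rigidity, so there is nothing a priori forcing the crossing disk $\Delta$ to be isotopic onto a crossing of a minimal diagram. Overcoming this step would require either a genuine classification of unknotting-number-one knots or a monotone-simplification scheme, in the spirit of Dynnikov's work on the unknot, that reduces the diagram obtained after the crossing change down to minimal crossing number while dragging the distinguished crossing disk along. Producing such a scheme compatible with the fixed crossing $\Delta$ is the crux on which the general case turns, and it is precisely the difficulty that the alternating hypothesis circumvents.
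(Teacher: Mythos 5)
This statement is Kohn's conjecture, and the paper does not prove it --- it is stated as Conjecture~\ref{conj: Kohn} precisely because it is open. The paper even remarks that in this generality the conjecture ``seems dubious,'' citing Stoimenow's 14-crossing examples in which one minimal diagram of an unknotting-number-one knot has \emph{no} unknotting crossing while another does; those examples already show that no argument working purely with $\Sigma(K)$ (which is diagram-independent) can force a distinguished crossing into an arbitrary minimal diagram. What the paper actually proves is the far more limited Theorem~\ref{thm: main}: the alternating 3-braid case of the derived Conjecture~\ref{conj: main}. So there is no ``paper's own proof'' for your proposal to match, and your proposal is not a proof either --- as you yourself concede, the realization step outside the alternating world is missing, and that step is not a technical loose end but the entire content of the conjecture.

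Beyond that honest admission, two links in your chain are weaker than you suggest. First, the obstruction machinery (Donaldson's Theorem A plus the correction-term symmetry of Theorem~\ref{t: symmetry}) yields \emph{necessary} conditions on $u(K)=1$; it does not ``cut the candidate knots down to a structured list.'' There is no classification of knots satisfying the conclusion of Theorem~\ref{t: criterion}, nor of the L-space knots $\kappa$ produced by the Montesinos trick --- the paper flags exactly this as speculation in Section~\ref{s: conc}, and Question~\ref{q: main} asks whether passing the test even forces an unknotting crossing for general alternating knots. Second, in the alternating case the Tait flyping theorem only \emph{propagates} an unknotting crossing from one alternating diagram to all others; locating one in the first place is the hard part, and in the paper this consumes Sections~\ref{s: sig 2} and~\ref{s: sig 0}: one must show that the embedding matrix $A$ of Theorem~\ref{t: criterion} itself identifies a crossing (via the distinguished rows $v_i$, $v_j$ with $\langle v_i,v_j\rangle \neq 0$, the change-making condition~(\ref{e: change}), and Proposition~\ref{p: almost-alt} to recognize the unknot after the crossing change). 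That combinatorial analysis is carried out only for 3-braid closures; nothing in your sketch supplies its analogue even for general alternating knots, let alone for arbitrary minimal diagrams. A minor point in the same direction: the surgery coefficient in the Montesinos trick carries a sign determined by $\sigma(K)$ (Proposition~\ref{p: Mont}, $\epsilon=(-1)^{\sigma(K)/2}$), which is what makes the definite-manifold gluing available; writing $S^3_{\det(K)/2}(\kappa)$ unsigned loses the hypothesis that drives the Donaldson step.
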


\noindent Stated in this level of generality, the conjecture seems dubious. For instance, Stoimenow \cite[Example 7.1]{Stoim} has found examples of 14-crossing knots with unknotting number one, each of which possesses a minimal diagram with {\em no} unknotting crossing (although each also possesses a minimal diagram which {\em does} contain an unknotting crossing).

However, Conjecture \ref{conj: Kohn} appears rather robust for the case of an alternating knot $K$. For one thing, the minimal diagrams for $K$ in this case are all alternating \cite{Kauffman,Malt,Thistle}.  For another, any two alternating diagrams of $K$ are related by a sequence of Tait flypes and the introduction and cancellation of nugatory crossings \cite{MenThis}, and each of these operations preserves the property of a diagram possessing an unknotting crossing.  Thus, if one alternating diagram of $K$ contains an unknotting crossing, then so does any other.  Therefore, we obtain the following derivative of Kohn's conjecture.

\begin{conj}\label{conj: main}

If $K$ is an alternating knot with unknotting number one, then any alternating diagram of $K$ must contain an unknotting crossing.

\end{conj}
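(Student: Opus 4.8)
The plan is to reduce the topological statement to a lattice embedding problem and then to read an unknotting crossing off of its solutions, combining the two obstructions advertised in the abstract. Fix a reduced alternating diagram $D$ of $K$ and let $\Lambda_D$ denote the Goeritz form of the black checkerboard surface of $D$; after the standard sign normalization for an alternating diagram this is a negative definite symmetric form of some rank $n$ on $\Z^n$. It is the intersection form of the smooth $4$-manifold $X_D$ obtained as the double cover of $B^4$ branched over the pushed-in surface, so $\del X_D = \Sigma(K)$ and $b_2(X_D)=n$. By the Montesinos trick, if $u(K)=1$ then $\Sigma(K)$ is $\pm\det(K)/2$-surgery on a knot $\kappa\subset S^3$, the sign recording the sign of the changed crossing; this description furnishes a second filling $W$ of $\pm\Sigma(K)$, and gluing $X_D$ to a cap of $W$ along $\Sigma(K)$ produces a closed negative definite $4$-manifold $Z$ with $b_2(Z)=n+1$.

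Donaldson's Theorem A then forces the intersection form of $Z$ to be the standard negative definite diagonal form on $\Z^{n+1}$, so that $\Lambda_D$ embeds isometrically into it. The Ozsv\'ath--Szab\'o obstruction sharpens this embedding: for an alternating knot the correction terms of $\Sigma(K)$ are computed directly from $\Lambda_D$, while the surgery presentation constrains them through the half-integer surgery formula, and matching the two pins the embedding down so that $\Lambda_D$ is realized as the orthogonal complement of a single \emph{changemaker} vector $\sigma=(\sigma_0,\dots,\sigma_n)\in\Z^{n+1}$, one obeying $0\le\sigma_0\le\cdots\le\sigma_n$ and $\sigma_i\le 1+\sum_{j<i}\sigma_j$ for every $i$. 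In short, $u(K)=1$ implies that the Goeritz lattice $\Lambda_D$ admits a changemaker embedding into $\Z^{n+1}$.

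What remains, and what I expect to be the genuine difficulty, is to run this implication backwards at the level of the diagram: to show that the existence of a changemaker embedding of $\Lambda_D$ forces a crossing of $D$ to be an unknotting crossing. I would attempt this by classifying the changemaker embeddings combinatorially and extracting from the vector $\sigma$ a distinguished crossing of $D$ whose change realizes the surgery presentation of $\Sigma(K)$ and unknots $K$; the flyping theorem of Menasco--Thistlethwaite recalled above would then propagate the conclusion from $D$ to every alternating diagram of $K$. The main obstacle is that for an \emph{arbitrary} alternating knot the form $\Lambda_D$ is an essentially unconstrained graph lattice coming from a plane graph, so neither the classification of changemaker embeddings nor the certification that each solution is geometrically realized by a true crossing admits uniform control; it is precisely here that one is driven to restrict the class of admissible Goeritz forms, where a tightly structured family (such as that arising from closed $3$-braids) renders both the lattice analysis and its geometric realization tractable.
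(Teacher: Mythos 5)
The statement you set out to prove is Conjecture \ref{conj: main}, and the first thing to say is that the paper does not prove it either: it remains a conjecture there, established only for alternating 3-braid knots (Theorem \ref{thm: main}) and, by a case-by-case check, for alternating knots of at most 11 crossings. Your proposal accurately reconstructs the machinery the paper develops --- the signed Montesinos trick exhibiting $\Sigma(K)$ as half-integer surgery, Donaldson's Theorem A applied to the closed-up negative definite manifold, and the correction-term symmetry refining the resulting lattice embedding to a change-making condition; this is precisely the content of Theorem \ref{t: criterion}. But your third paragraph, where you concede that "the genuine difficulty" of extracting an unknotting crossing from a changemaker embedding for an \emph{arbitrary} alternating diagram is beyond uniform control, is not a proof sketch with a fillable hole: it is exactly the open problem the paper isolates as Question \ref{q: main}, and the paper's answer to it is known only in the tightly structured 3-braid case (Sections \ref{s: sig 2} and \ref{s: sig 0}, where the combinatorics of the embedding matrix $A$, including the sets $\B_0$ and the expansion/contraction analysis, are what certify the crossing geometrically). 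So the proposal has a genuine gap, and the gap coincides with the reason the statement is a conjecture rather than a theorem.

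Two smaller inaccuracies in your setup are worth flagging. First, the half-integer surgery trace $W$ has $b_2(W) = 2$ (handles with framings $-n$ and $-2$), so the closed manifold has $b_2 = n+2$, not $n+1$, and the paper's embedding statement is $-AA^T = G_K \oplus R_n$ with \emph{two} distinguished rows: $y = (1,-1,0,\dots,0)$ and $x = (0,1,x_3,\dots,x_{n+2})$, with the change-making condition imposed on $(x_3,\dots,x_{n+2})$ and the additional constraint $\det(C) = \pm 1$ on the upper-right block. Your formulation --- $\Lambda_D$ as the orthogonal complement of a single changemaker vector $\sigma$ in $\Z^{n+1}$ --- is both dimensionally off and strictly stronger than what Theorem \ref{t: criterion} asserts; the paper obtains an embedding with constrained rows, not a full complement realization. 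Second, the reduction from $D$ to all alternating diagrams via Menasco--Thistlethwaite flyping is already handled in the paper's Introduction and is the easy direction; invoking it does not advance the hard step.
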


Closely related to Conjecture \ref{conj: main} is an elegant result of Tsukamoto, which characterizes the alternating diagrams which contain an unknotting crossing \cite{T}.  In short, Tsukamoto's theorem provides a simple algorithm to test whether a given crossing in an alternating diagram is an unknotting crossing.  An affirmative answer to Conjecture \ref{conj: main} would therefore couple with Tsukamoto's theorem to give a simple algorithm to test whether an alternating knot has unknotting number one.


Previously, Conjecture \ref{conj: main} was known to hold for some broad classes of alternating knots: two-bridge knots \cite{KM,Kohn}, alternating large algebraic knots \cite{GLu}, and alternating knots with up to 10 crossings \cite{GLu,OSunknotting}.  Furthermore, the methodology of \cite{GLu} applies to show that Conjecture \ref{conj: main} holds for all but at most 100 11-crossing alternating knots \cite{knotinfo}.

Our main result is the validity of Conjecture \ref{conj: main} for alternating 3-braid knots.

\begin{thm}\label{thm: main}

If $K$ is an alternating 3-braid knot with unknotting number one, then any alternating diagram of $K$ must contain an unknotting crossing.

\end{thm}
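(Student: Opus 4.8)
The plan is to reduce Theorem \ref{thm: main} to a lattice-embedding problem solvable by a finite case analysis, and then to exhibit the unknotting crossing explicitly for each surviving knot. I would begin by putting the alternating $3$-braid knots into standard form: every such $K$ is the closure of a braid conjugate to $\sigma_1^{a_1}\sigma_2^{-b_1}\cdots\sigma_1^{a_n}\sigma_2^{-b_n}$ with all $a_i,b_i\ge 1$, and this closure, with the evident checkerboard colouring, is a reduced alternating diagram $D$. From $D$ I extract the negative definite Goeritz form, whose lattice $\Lambda$ has an explicit, low-complexity plumbing shape (a star with at most three legs, reflecting the Seifert-fibred structure of the double branched cover $\Sigma(K)$). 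Recording $\Lambda$ as a family parametrized by $(a_i),(b_i)$ is routine bookkeeping.

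Next I would bring in the two obstructions named in the abstract. Assume $u(K)=1$. By the Montesinos trick the unknotting crossing change realizes $\Sigma(K)$ as half-integer surgery $S^3_{\pm\det(K)/2}(\kappa)$ on some knot $\kappa\subset S^3$. On the one hand, $\Sigma(K)$ bounds the sharp negative definite $4$-manifold $X_\Lambda$ whose intersection form is $\Lambda$; on the other, the surgery description gives a second $4$-manifold cobounding $\Sigma(K)$. Gluing these along $\Sigma(K)$ and capping appropriately (after reversing orientation and adding blow-ups as needed) produces a closed negative definite $4$-manifold, so Donaldson's Theorem A forces its form to be diagonal. Tracking $\Lambda$ through this construction yields an isometric embedding $\Lambda\hookrightarrow\mathbb{Z}^{r+1}$ realizing $\Lambda$ as the orthogonal complement $\sigma^\perp$ of a single distinguished vector $\sigma$, whose coordinates satisfy the ``changemaker'' inequalities dictated by the surgery coefficient $\det(K)/2$. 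Simultaneously, the Ozsv\'ath--Szab\'o obstruction forces the correction terms of $\Sigma(K)$---computable from $\Lambda$ because $X_\Lambda$ is sharp---to match those of an honest half-integer surgery, pinning down the torsion coefficients of $\kappa$ and rigidifying the embedding further.

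The heart of the argument, and the step I expect to be hardest, is then purely combinatorial: classify the plumbing lattices $\Lambda$ arising from the family in the first step that admit such a changemaker embedding. The interplay of the linear structure of each leg with the single changemaker constraint should cut the possibilities down to a short, explicit list of parameter values, together with at most finitely many sporadic exceptions. Carrying this out requires a careful induction on the coordinates of $\sigma$ and a delicate analysis of how the branch vertex of the plumbing can sit inside a diagonal lattice; essentially all of the technical work resides here.

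Finally, for each knot $K$ surviving the lattice obstruction I would verify directly that its alternating diagram contains an unknotting crossing, for instance by locating the crossing and applying Tsukamoto's criterion \cite{T}, thereby both confirming Conjecture \ref{conj: main} for these knots and producing the promised enumeration. Conversely, any alternating $3$-braid diagram with a visible unknotting crossing necessarily survives the obstruction, so the two lists coincide and no sporadic counterexample can slip through.
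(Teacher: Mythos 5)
Your overall scaffolding (Montesinos trick, then Donaldson's Theorem A combined with the Ozsv\'ath--Szab\'o correction-term symmetry to produce a changemaker-type embedding restriction) does match the paper's Theorem \ref{t: criterion} in spirit. But there are genuine gaps. First, a setup error: the Goeritz lattice of an alternating $3$-braid closure is \emph{not} a star-shaped plumbing with three legs. The white graph is a cycle $v_1,\dots,v_r$ together with a marked hub vertex, so after deleting the marked vertex the lattice is a \emph{circular} plumbing; the double branched cover is generically a graph manifold containing an essential torus, not a Seifert fibered space. A leg-by-leg induction on a three-legged star would analyze the wrong family of lattices. (Also, since the surgery is half-integer, the trace $W$ has $b_2(W)=2$, so the correct formulation embeds $G_K \oplus R_n$ into $\Z^{r+2}$ with \emph{two} distinguished vectors $x,y$, not a single changemaker $\sigma$ in $\Z^{r+1}$.)

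Second, and more seriously, the heart of your argument is deferred to ``a careful induction,'' and the endgame you propose in its place cannot close the loop. You assert that the survivors of the lattice obstruction form a short explicit list plus finitely many sporadic exceptions, to be checked one by one against Tsukamoto's criterion. But the alternating $3$-braid knots with unknotting number one form \emph{infinite} families (this is visible in Proposition \ref{p: almost-alt}), so there is no finite check; and your closing claim that ``the two lists coincide'' because diagrams with visible unknotting crossings survive the obstruction is circular --- the entire content of Theorem \ref{thm: main} is that no survivor \emph{lacks} an unknotting crossing, which the easy converse cannot supply. The paper avoids enumeration altogether by showing that the embedding \emph{itself locates the crossing}: the vector $v = v_1 + \cdots + v_r$ (with $y$ added when $\sigma(K)=0$) is characteristic of square $-(r+2)$, hence equals ${\bf 1}$ after negating basis vectors, which forces exactly one row (signature $2$) or exactly two rows (signature $0$) of the embedding matrix to carry nonzero entries in the first two columns. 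Changing the crossing at the corresponding region(s) of the diagram yields an almost-alternating $3$-braid diagram whose Goeritz matrix is $-CC^T$, hence of determinant $1$, and such a diagram represents the unknot by Proposition \ref{p: almost-alt} --- no appeal to Tsukamoto's theorem or case enumeration is needed. Moreover, in the signature-$0$ case one must prove that the two distinguished rows are adjacent, i.e.\ $\langle v_i, v_j \rangle \neq 0$ (Claim \ref{claim: (v_i,v_j) = 1}); this is precisely where the change-making condition (\ref{e: change}) and the Lisca-style expansion/contraction analysis of Section \ref{s: sig 0} are indispensable, and none of that mechanism appears in your outline.
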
 \noindent  Combined with Proposition \ref{p: almost-alt}, which characterizes the alternating 3-braid knot diagrams containing an unknotting crossing, Theorem \ref{thm: main} leads to the enumeration of the alternating 3-braid knots with unknotting number one.  Moreover, we argue in Proposition \ref{p: d bound} that any 3-braid knot with unknotting number 1 is ``close" to being alternating.  Furthermore, we settle Conjecture \ref{conj: main} for {\em all} 11-crossing alternating knots.


\subsection{Methodology.}  Theorem \ref{thm: main} follows by an application of Theorem \ref{t: criterion}, an algebraic-combinatorial obstruction to an alternating knot having unknotting number one.  At the heart of the method is a simple observation known as the {\em Montesinos trick}: if $u(K)=1$, then $\Sigma(K)$, the double-cover of $S^3$ branched along $K$, arises as $1/2$-integer surgery on some other knot $\kappa \subset S^3$ \cite{Monty}.  Thus, if we can obstruct $\Sigma(K)$ from arising as such a surgery, then it follows that $u(K) > 1$.  For example, if $H_1(\Sigma(K))$ fails to be a cyclic group, then $u(K)>1$ follows.

Earlier researchers have developed two finer obstructions stemming from the Montesinos trick. We briefly sketch both, and return to them in greater detail in Section \ref{s: criterion}.  First, suppose that $\Sigma(K)$ is known to bound a smooth, negative-definite 4-manifold $X$.  If $-\Sigma(K) = S^3_{-D/2}(\kappa)$ for some $D > 0$, then $-\Sigma(K)$ bounds a smooth, negative-definite 4-manifold $W$ with $b_2(W)=2$.  Gluing $X$ and $W$ along their common boundary results in a closed, smooth, negative-definite 4-manifold.  By Donaldson's Theorem A, its intersection pairing is diagonalizable \cite{D}.  This places a restriction on the intersection pairing on $X$, and by way of this restriction, Cochran and Lickorish were able to obtain some results on signed unknotting numbers \cite{CL}.  Second, suppose that $\Sigma(K)$ is a Heegaard Floer L-space with known correction terms.  If $\Sigma(K)$ is a $1/2$-integer surgery, then these values must obey a special symmetry.  By way of this method, Ozsv\'ath and Szab\'o were able to determine all the alternating knots with $\leq 10$ crossings with unknotting number one, as well as some non-alternating ones \cite{OSunknotting}.


The basic technical advance made in this work is a way to combine the obstructions stemming from Donaldson's Theorem A and the correction terms in order to develop a stronger restriction on a knot to have unknotting number one.  The way in which the two combine is reminiscent of (and indeed inspired by) a related obstruction to a knot being {\em smoothly slice} \cite{GJ}.  In short, Donaldson's Theorem A gets applied to show that a certain lattice associated to an alternating knot $K$ must embed as a sublattice of the standard $\Z^n$ lattice if $u(K) = 1$, and then the correction terms provide a sharper restriction on the embedding.  The precise statement is given in Theorem \ref{t: criterion}.  Remarkably, in the application to Theorem \ref{thm: main}, it turns out that if the lattice associated to an alternating 3-braid knot fulfills the conclusion of Theorem \ref{t: criterion}, then the embedding given therein actually identifies an unknotting crossing in the standard alternating 3-braid closure diagram.  The strength of Theorem \ref{t: criterion} is somewhat surprising, and it is natural to probe the limits of its strength.


\begin{q}\label{q: main}

Suppose that $K$ is an alternating knot with $|\sigma(K)| \leq 2$, $H_1(\Sigma(K))$ cyclic, and which fulfills the conclusion of Theorem \ref{t: criterion}.  Does it follow that an alternating diagram of $K$ must contain an unknotting crossing?

\end{q}

Clearly, an affirmative answer to Question \ref{q: main} would entail one to Conjecture \ref{conj: main} as well.  Shy of attacking Question \ref{q: main} in full, it would be interesting to pursue it in some special situations, for example alternating $n$-braid knots with unknotting number one.  Moreover, a version of Theorem \ref{t: criterion} applies to some non-alternating knots as well, and seems ripe to apply to the classification of {\em all} $3$-braid knots with unknotting number one.  However, some algebraic complications arise in the general case which have yet to be resolved at the time of this writing.  We discuss this situation in the concluding section, and hope to return to this topic in a sequel.


\subsection{Organization.}  The remainder of the paper is organized as follows.  In Section \ref{s: background}, we collect some basic notions regarding 3-braid knots and the Goeritz matrix associated to a knot diagram.  In particular, we describe the alternating 3-braid diagrams containing an unknotting crossing in Proposition \ref{p: almost-alt}, and show in Proposition \ref{p: d bound} how any 3-braid knot with unknotting number one is close to being alternating.  In Section \ref{s: HF} we provide the necessary background on intersection pairings and the correction terms from Heegaard Floer homology.   Lemma \ref{l: d diff}, especially in the equivalent form given in Lemma \ref{l: d diff 2}, is the key technical result of that section.  In Section \ref{s: criterion}, we state a precise version of the Montesinos trick, and discuss the obstructions to unknotting number one to which it leads via Donaldson's Theorem A and the Heegaard Floer correction terms.  That section culminates in the proof of Theorem \ref{t: criterion}, and illustrates it by way of a couple examples and the application to 11-crossing knots.  In Section \ref{s: sig 2}, we swiftly deduce Theorem \ref{thm: main} for the case of an alternating 3-braid knot with non-zero signature by an application of Theorem \ref{t: criterion}.  The argument there stands in marked contrast to that given in Section \ref{s: sig 0}, in which we prove Theorem \ref{thm: main} for the case of an alternating 3-braid knot with zero signature.  In that case we must delve more deeply into the combinatorics of the embedding matrix given in Theorem \ref{t: criterion} to obtain the desired conclusion.  The methodology used therein draws inspiration from work of Lisca \cite{Lisca}.  The concluding Section \ref{s: conc} discusses some directions for future work, a remark concerning quasi-alternating links, and some further justification behind Conjecture \ref{conj: main}.

To quickly navigate to the gist of our technique, we recommend first skimming the background Section \ref{s: background}, skipping over the proof of Proposition \ref{p: almost-alt} through the end of Subsection \ref{ss: 3braids}, then reading the statement of Theorem \ref{t: criterion}, and finally reading its sample applications in Subsection \ref{ss: example}.

\section*{Acknowledgments.}

It is a pleasure to thank my advisor, Zolt\'an Szab\'o, for his continued guidance and support.  Thanks in addition to Slaven Jabuka, Chuck Livingston, Jake Rasmussen, and Eric Staron for helpful correspondence, and to John Baldwin and Ina Petkova for their continued interest.


\section{3-braid knots and the Goeritz form.}\label{s: background}

\subsection{3-braid knots.}\label{ss: 3braids}

Denote the standard generators of the braid group $B_3$ by $\sigma_1$ and $\sigma_2$.  Let $h = (\sigma_1 \sigma_2)^3$.  The following result of Murasugi \cite[Proposition 2.1]{M3braid} gives a normal form for 3-braids.

\begin{prop}\label{p: normalform}

Any 3-braid is equivalent, up to conjugation, to exactly one of the form $h^d \cdot w$, where $d \in \Z$ and $w$ is either

\begin{enumerate}

\item an alternating word $\sigma_1^{-a_1} \sigma_2^{b_1} \cdots \sigma_1^{-a_m} \sigma_2^{b_m}$ with $m \geq 1$ and all $a_i, b_i \geq 1$;

\item $(\sigma_1 \sigma_2)^k$ for some $k \in \{ 0,1,2 \}$; or

\item $\sigma_1 \sigma_2 \sigma_1$, $\sigma_1^{-k}$, or $\sigma_2^k$, for some $k \geq 1$.


\end{enumerate}

\end{prop}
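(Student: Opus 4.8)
The plan is to reduce to the conjugacy problem in the central quotient $B_3/\langle h\rangle$ and to track the central factor separately. First I would record that $h = (\sigma_1\sigma_2)^3 = (\sigma_1\sigma_2\sigma_1)^2$ is central: writing $\Delta = \sigma_1\sigma_2\sigma_1$ for the half-twist, the braid relation gives $\Delta\sigma_1 = \sigma_2\Delta$ and $\Delta\sigma_2 = \sigma_1\Delta$ (conjugation by $\Delta$ is the flip $\sigma_1\leftrightarrow\sigma_2$), so $h = \Delta^2$ commutes with both generators and $\langle h\rangle\cong\Z$ is central. Next I would identify the quotient. Setting $x = \Delta$ and $y = \sigma_1\sigma_2$ one has $x^2 = y^3 = h$, and $\sigma_1 = y^{-1}x$, $\sigma_2 = x^{-1}y^2$; a routine Tietze transformation then yields $B_3\cong\langle x,y\mid x^2 = y^3\rangle$, whence $B_3/\langle h\rangle\cong\langle x,y\mid x^2 = y^3 = 1\rangle\cong\Z/2\ast\Z/3\cong\mathrm{PSL}(2,\Z)$.

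With the quotient in hand, I would invoke the standard cyclic normal form for free products: every conjugacy class of $\Z/2\ast\Z/3$ is represented by a cyclically reduced word, which is either a nontrivial element of a single factor — namely $x$, $y$, or $y^2$ — or an alternating product $\prod_{i}\bigl(x\,y^{\epsilon_i}\bigr)$ with each $\epsilon_i\in\{1,2\}$, the latter being unique up to cyclic permutation. Translating back through $\sigma_1^{-1}\equiv xy$ and $\sigma_2\equiv xy^2 \pmod{h}$, a block $\sigma_1^{-a}\sigma_2^{b}$ maps to $(xy)^a(xy^2)^b$, so the alternating braid words of item~(1) realize exactly the cyclically reduced words that use both syllable types $y$ and $y^2$; the degenerate words $(xy)^k$ and $(xy^2)^k$, using only one type, are the two parabolic families realized by $\sigma_1^{-k}$ and $\sigma_2^{k}$; and the torsion classes $1,y,y^2$ and $x$ are realized by $(\sigma_1\sigma_2)^k$ for $k\in\{0,1,2\}$ and by $\sigma_1\sigma_2\sigma_1$. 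This exhausts the conjugacy classes of the quotient and shows each admits a representative of the stated form $w$.

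It remains to lift and to prove uniqueness. Since $\langle h\rangle$ is central, conjugation in $B_3$ descends to conjugation in the quotient, and the fiber over a fixed quotient class consists of the $B_3$-classes $\{h^d\,w : d\in\Z\}$. To separate these I would use the exponent-sum homomorphism $e\colon B_3\to\Z$, $\sigma_i\mapsto 1$, which is a conjugacy invariant with $e(h) = 6$; given $w$, the value of $e$ determines $d$ uniquely, yielding both the decomposition $h^d w$ and the uniqueness of $d$. Combining the free-product cyclic-word invariant, which separates the quotient classes, with $e$ then gives the full ``exactly one'' statement.

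The main obstacle I expect is the sign normalization together with the matching uniqueness check at the boundary cases. Producing a representative whose $w$ has precisely the pattern $\sigma_1^{-a_i}\sigma_2^{b_i}$ requires choosing, among all lifts of a cyclic free-product word, the canonical one built from $\sigma_1^{-1}\equiv xy$ and $\sigma_2\equiv xy^2$, absorbing every excess crossing into a power of $h$; carrying this out while respecting the cyclic-permutation ambiguity is the delicate step. On the uniqueness side one must verify the overlaps: that $(\sigma_1\sigma_2)^3 = h$ forces the restriction $k\in\{0,1,2\}$ in item~(2), so that a full twist is never hidden inside $w$; that the parabolic representatives $\sigma_1^{-k}$ and $\sigma_2^{k}$ lie in distinct classes, separated by $e$, whereas $\sigma_1^{k}$ and $\sigma_2^{k}$ coincide up to conjugacy by $\Delta$, which is exactly why a single sign of each suffices; and that no alternating word of item~(1) is secretly conjugate to an exceptional word, which holds because the former uses both syllable types and the latter do not.
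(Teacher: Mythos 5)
Your proof is correct, but there is nothing in the paper to compare it against: the paper states this proposition as a quotation of Murasugi \cite[Proposition 2.1]{M3braid} and supplies no proof of its own. Your argument --- pass to the central quotient $B_3/\langle h\rangle\cong\langle x,y\mid x^2=y^3=1\rangle\cong\Z/2\ast\Z/3$, classify conjugacy classes there by the cyclic-word theorem for free products, translate back via the dictionary $\sigma_1^{-1}\equiv xy$, $\sigma_2\equiv xy^2 \pmod h$, and separate the lifts $h^d w$ by the exponent-sum homomorphism with $e(h)=6$ --- is the standard modern proof, and each step checks out: the identities $\sigma_1=y^{-1}x$ and $\sigma_2=x^{-1}y^2$ are correct, so blocks $\sigma_1^{-a}\sigma_2^{b}$ do map to $(xy)^a(xy^2)^b$; the quotient classes of the listed words are pairwise non-conjugate (in particular $(xy)^k$ and $(xy^2)^k$ are not cyclic permutations of one another, which is exactly why the list needs only $\sigma_1^{-k}$ and $\sigma_2^{k}$, their mirrors being recovered by $\Delta$-conjugation); and centrality of $h$ together with conjugacy-invariance of $e$ shows each braid is conjugate to $h^d w$ for a unique $d$ and a unique listed $w$. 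The one caveat, which you correctly flag, is that in case (1) ``exactly one'' must be read up to cyclic permutation of the block data $(a_1,b_1,\dots,a_m,b_m)$, since cyclic permutations yield conjugate braids; this is the intended reading of Murasugi's statement. Murasugi's own argument is a more hands-on manipulation of braid words; your route buys a transparent uniqueness statement (quotient conjugacy class together with exponent sum is a complete conjugacy invariant) at the cost of importing the free-product conjugacy theorem.
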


Observe that the closure of a 3-braid in normal form is a {\em knot} only in case (1) and in case (2) with $k \ne 0$.  We call such a knot a {\em 3-braid knot} and its {\em normal form} its realization as the closure of a 3-braid in normal form. The case $w = \sigma_1 \sigma_2 $ corresponds to the torus knot $T(3,3d+1)$, while the case $w = (\sigma_1 \sigma_2)^2$ corresponds to $T(3,3d+2)$.  A cyclic, {\em almost-alternating} 3-braid word is the result of changing some crossing in a cyclic, alternating 3-braid word.


\begin{prop}\label{p: almost-alt}

Suppose that $w$ is a cyclic, almost-alternating 3-braid word whose closure is a knot with determinant one.  Then that knot is the unknot, and $w$ takes one of the following forms:

\begin{enumerate}

\item $\sigma_1^{-n_1-1\pm1} \sigma_2^{n_2} \cdots \sigma_1^{-n_{k-1}} \sigma_2^{n_k+1} \sigma_1 \sigma_2 \sigma_1^{-n_k} \sigma_2^{n_{k-1}} \cdots \sigma_1^{-n_2} \sigma_2^{n_1-1}$, with $k \geq 2$ even and $n_1,\dots,n_k \geq 1$ (and every unlabeled exponent of the form $\pm n_i$);

\item $\sigma_1^{-n_1-1\pm1} \sigma_2^{n_2} \cdots \sigma_1^{-n_k} \sigma_2 \sigma_1 \sigma_2^{n_k+1} \cdots \sigma_1^{-n_2} \sigma_2^{n_1-1}$, with $k \geq 3$ odd and $n_1,\dots,n_k \geq 1$;

\item $\sigma_1^{-n-1\pm1} \sigma_2 \sigma_1 \sigma_2^n$, with $n \geq 0$;

\item $\sigma_1^{-2} \sigma_1 \sigma_2, \sigma_1 \sigma_1^{-2} \sigma_2, \sigma_1^{-1} \sigma_1 \sigma_1^{-1} \sigma_2$;

\item the result of swapping $\sigma_1^{-1}$ and $\sigma_2$ in one of these words; or

\item the inverse of one of these words (including the previous case).

\end{enumerate}

\end{prop}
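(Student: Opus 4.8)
The plan is to convert the determinant-one hypothesis into a rigidity statement about the syllable lengths of $w$ and then solve it. I would first record $\det(\hat{w})$ in terms of the standard diagram. Writing the alternating parent as $w_0 = \sigma_1^{-a_1}\sigma_2^{b_1}\cdots\sigma_1^{-a_m}\sigma_2^{b_m}$, its Goeritz matrix is, up to sign, a definite matrix of essentially tridiagonal type whose entries are assembled from the $a_i$ and $b_i$, so that $\det(\hat{w_0})$ is given by a continued fraction in these exponents. Equivalently, and more convenient for computation, I would evaluate the reduced Burau representation $\rho$ of $B_3$ at $t=-1$, where $\rho(\sigma_1)=\left(\begin{smallmatrix}1&0\\1&1\end{smallmatrix}\right)$ and $\rho(\sigma_2)=\left(\begin{smallmatrix}1&-1\\0&1\end{smallmatrix}\right)$ generate $SL_2(\Z)$; then $\rho(w)\in SL_2(\Z)$ and $\det(\hat{w})=|\det(\rho(w)-I)|=|2-\mathrm{tr}\,\rho(w)|$, so the hypothesis $\det(\hat{w})=1$ is equivalent to $\mathrm{tr}\,\rho(w)\in\{1,3\}$. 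A crossing change flips the sign of a single generator, perturbing exactly one factor of this product.

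Next I would trim the case analysis using the symmetries built into the statement. Because $w$ is considered cyclically, and because swapping the letters $\sigma_1^{-1}$ and $\sigma_2$ (case (5)) and inverting the word (case (6)) are determinant-preserving operations that preserve the almost-alternating property, I may assume the altered crossing lies in a $\sigma_1$-syllable and converts a $\sigma_1^{-1}$ into a $\sigma_1^{+1}$. Two regimes then present themselves. If the affected syllable $\sigma_1^{-a_i}$ has $a_i\geq2$, the flipped letter sits adjacent to like-signed letters and the diagram is isotopic to an alternating closure with $a_i$ effectively decreased; since an alternating knot has determinant one only if it is the unknot, and the only reduced alternating $3$-braid diagram of the unknot is degenerate, this regime is forced into the small configurations of case (4) (together with the requirement that the closure remain a knot rather than a link). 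The substantive regime is $a_i=1$, in which the flipped letter persists as an isolated positive $\sigma_1$ wedged between two $\sigma_2$-syllables.

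In the substantive regime I would compute $\mathrm{tr}\,\rho(w)$ directly. Expressing $\rho(w)$ as the product of the lower-triangular matrices $\rho(\sigma_1^{-a_i})$, the upper-triangular matrices $\rho(\sigma_2^{b_i})$, and the single anomalous factor $\rho(\sigma_1)=\left(\begin{smallmatrix}1&0\\1&1\end{smallmatrix}\right)$, the constraint $\mathrm{tr}\,\rho(w)\in\{1,3\}$ becomes a Diophantine condition linking the syllable lengths on the two sides of the defect. I would solve it by induction on the number of syllables: near the defect a braid relation such as $\sigma_1\sigma_2\sigma_1=\sigma_2\sigma_1\sigma_2$ followed by a Markov destabilization peels a matched pair of exponents off the two ends and reduces to a shorter instance of the same problem, which forces the palindromic pattern $n_1,\dots,n_k$ against $n_k,\dots,n_1$ around the central block. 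The boundary corrections $-n_1-1\pm1$, $n_k+1$, and $n_1-1$ are precisely the bookkeeping produced when the two ends of the cyclic word are identified, while the parity of $k$ distinguishes cases (1) and (2), with case (3) serving as the base of the induction.

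I expect the inductive core of the previous paragraph to be the main obstacle: proving that $\mathrm{tr}\,\rho(w)\in\{1,3\}$ forces exactly the palindromic matching of the $n_i$ with precisely these endpoint perturbations, and admits no broader family of near-solutions. This is a continued-fraction rigidity argument of the kind Lisca carries out for linear lattices \cite{Lisca}, and the delicate feature is that the cyclic closure couples the two extreme syllables, so both endpoints must be tracked simultaneously as the induction unwinds. Once the admissible words are pinned down, the chain of destabilizations assembled during the induction also realizes each of them as the unknot, yielding the final clause of the proposition.
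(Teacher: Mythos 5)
Your reformulation of the hypothesis via Burau at $t=-1$ is correct ($\det = |2-\mathrm{tr}\,\rho(w)|$, so determinant one is equivalent to $\mathrm{tr}\,\rho(w)\in\{1,3\}$), and your symmetry reductions and the easy regime (flipped crossing inside a syllable $\sigma_1^{-a_i}$ with $a_i\geq 2$) agree with what actually happens. But the proof stops exactly where the content of the proposition lies: you defer, and never supply, the ``inductive core'' showing that the trace condition forces the palindromic pattern. The paper's proof shows that no Diophantine analysis is needed for this step: the moves $\sigma_2\sigma_1\sigma_2\sigma_1^{-1}\to\sigma_1\sigma_2$ and $\sigma_1^{-1}\sigma_2\sigma_1\sigma_2\to\sigma_2\sigma_1$ are identities in $B_3$ (braid relation plus free cancellation), so they preserve the closure as a link --- hence the determinant on the nose --- and the hypothesis is invoked only at the terminal words, after which the enumeration comes from running the moves backwards. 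Note also that your proposed inductive step, a braid relation \emph{followed by a Markov destabilization}, does not reduce to a shorter instance of the same problem: destabilization lowers the braid index, so you leave the class of 3-braid words entirely. The peeling must be done by braid identities alone (which, incidentally, makes the $SL_2(\Z)$ bookkeeping redundant, since the link type itself is preserved).

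Second, and decisive even if the induction were set up correctly: the peeling process has a terminal configuration your plan never confronts. When the defect $\sigma_1$ is (cyclically) flanked by $\sigma_2^2$ on both sides, neither move applies; using $\sigma_2^2\sigma_1\sigma_2^2=\sigma_1^{-1}(\sigma_1\sigma_2)^3$, such a word is $h=(\sigma_1\sigma_2)^3$ times an alternating word. This case occurs immediately whenever the flipped $\sigma_1^{-1}$ sits between two $\sigma_2$-syllables of length at least two --- for instance the parent $\sigma_1^{-1}\sigma_2^2\sigma_1^{-2}\sigma_2^3$ with its first crossing changed, which is cyclically $h\cdot\sigma_1^{-3}\sigma_2$ --- so it cannot be wished away, and ruling it out is not formal: the paper appeals to Murasugi's result that the closure of $h$ times a nontrivial alternating word has determinant at least $5$ \cite[Proposition 5.1]{M3braid}. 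In your language, since $\rho(h)=-I$, you would need the estimate that $\mathrm{tr}\,\rho(w')\notin\{-1,-3\}$ for every alternating word $w'$ in $\sigma_1^{-1},\sigma_2$ whose product with $h$ closes to a knot. Some such external input is indispensable; in its absence your induction stalls at these words with no conclusion, so the claimed rigidity statement remains unproved.
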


\noindent Therefore, the list of alternating 3-braid diagrams with an unknotting crossing is gotten from the list in Proposition \ref{p: almost-alt} by replacing the single $\sigma_1$ in each of (1)-(4) by $\sigma_1^{-1}$.

\begin{proof}

Suppose that $w$ is a cyclic, almost-alternating 3-braid word, and assume that the corresponding alternating braid is a word in $\sigma_1^{-1}$ and $\sigma_2$, and that $w$ is the result of changing one $\sigma_1^{-1}$ to a $\sigma_1$.  Repeatedly make the substitutions $\sigma_2 \sigma_1 \sigma_2 \sigma_1^{-1} \to \sigma_1 \sigma_2$ and $\sigma_1^{-1} \sigma_2 \sigma_1 \sigma_2 \to \sigma_2 \sigma_1$.  When this is no longer possible, we obtain an equivalent word which (a) contains $\sigma_1 \sigma_1^{-1}$ or $\sigma_1^{-1} \sigma_1$, (b) contains $\sigma_2^2 \sigma_1 \sigma_2^2$, or (c) equals one of $\sigma_1 \sigma_2$, $\sigma_1 \sigma_2^2$, or $\sigma_1 \sigma_2^3$.  In case (a), cancel to obtain a word in $\sigma_1^{-1}$ and $\sigma_2$.  In case (b), substitute $\sigma_2^2 \sigma_1 \sigma_2^2 = \sigma_1^{-1} (\sigma_1 \sigma_2)^3$ to obtain $h$ times a word in $\sigma_1^{-1}$ and $\sigma_2$.


Now suppose that the closure of $w$ is a knot $K$ with determinant one.  In case (a), we see that $K$ is an alternating knot, so it must be the unknot.  It follows that the word we obtain must be $\sigma_1^{-1} \sigma_2$.  In case (b), the word is either $h \cdot \sigma_1^{-k}$ or $h \cdot \sigma_2^k$ for some $k \geq 0$, or $h$ times an alternating word in $\sigma_1^{-1}$ and $\sigma_2$.  However, the first two possibilities do not yield knots, and the third one yields a link of determinant $\geq 5$ \cite[Proposition 5.1]{M3braid}, so this case does not occur.  In case (c), the only possibility is $\sigma_1 \sigma_2$, which again gives the unknot.

Reversing the above procedure leads to a method to produce the almost-alternating 3-braid words representing the unknot:  begin with one of the two words $\sigma_1^{-1} \sigma_2$, $\sigma_1 \sigma_2$, in the first case insert $\sigma_1^{-1} \sigma_1$ or $\sigma_1 \sigma_1^{-1}$ someplace, and proceed by making substitutions $\sigma_1 \sigma_2 \to \sigma_2 \sigma_1 \sigma_2 \sigma_1^{-1}$ and $\sigma_2 \sigma_1 \to \sigma_1^{-1} \sigma_2 \sigma_1 \sigma_2$.  The words we get in this way, together with swapping the roles of $\sigma_1^{-1}$ and $\sigma_2$, and taking inverses of all these words, constitute the desired set.  From this description, it is straightforward to obtain the list given in the statement of the Proposition.

\end{proof}

The following result of Erle \cite{E} determines the signature of a 3-braid knot in normal form.\footnote{We adhere to the convention that a positive knot has negative signature.  The signature formula for the torus knots predates Erle's work; see \cite[Proposition 9.1]{M3braid}, which actually uses the opposite convention on signature.}

\begin{prop}\label{p: sig}

Let $K_d$ denote the $3$-braid knot with normal form $h^d \cdot w$, where $w$ is as in Proposition \ref{p: normalform}(1). Then $\sigma(K) = -4d + \sum_{i=1}^m (a_i - b_i)$.  In addition, $\sigma(T(3,6d \pm 1)) = -8d$ and $\sigma(T(3,6d \pm 2)) = -8d \mp 2$.

\end{prop}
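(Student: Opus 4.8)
The plan is to separate the two contributions visible in the formula: the central full twist $h^d$, which should account for the term $-4d$, and the alternating syllable word $w$, which should account for $\sum_{i=1}^m(a_i-b_i)$. The torus knots $T(3,n)$ arise from case (2) of Proposition~\ref{p: normalform} rather than case (1), so I would treat them separately at the end. Concretely, I would anchor the computation at $d=0$ and establish a full-twist recursion $\sigma(K_{d+1})=\sigma(K_d)-4$ valid for every $d\in\Z$; together these yield the stated value.

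For $d=0$ the closure $\widehat{w}$ of $w=\sigma_1^{-a_1}\sigma_2^{b_1}\cdots\sigma_1^{-a_m}\sigma_2^{b_m}$ is an alternating link, and I would compute its signature from the Bennequin surface $F$ produced by Seifert's algorithm on the closed braid: three disks (one per strand) joined by one band per letter of $w$, so that $b_1(F)=\sum_i(a_i+b_i)-2$. Taking as a basis of $H_1(F)$ the loops running through consecutive like-typed bands, the symmetrized Seifert form $V+V^{T}$ is a block-tridiagonal integer matrix whose blocks track the syllables $\sigma_1^{-a_i}$ and $\sigma_2^{b_i}$. A direct diagonalization computes its inertia, giving signature $\sum_i(a_i-b_i)$; as a check, the base cases $\widehat{\sigma_1^{-a}\sigma_2^{b}}$ are connected sums of the (left-handed) $(2,a)$- and (right-handed) $(2,b)$-torus knots, of signature $(a-1)-(b-1)=a-b$, which fixes the normalization. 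Equivalently one may run Gordon--Litherland on the alternating checkerboard diagram, whose Goeritz form is definite, and add the crossing correction.

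The heart of the argument is the recursion $\sigma(K_{d+1})=\sigma(K_d)-4$, which must hold for all $d$ even though $K_d$ is generally non-alternating once $d\neq0$. Since $h=(\sigma_1\sigma_2)^3$ is central, passing from $K_d$ to $K_{d+1}$ inserts one positive full twist on the three strands, enlarging $F$ by six bands and $H_1(F)$ by a rank-six summand. I would compute the $6\times6$ self-block contributed by the twist, show it has signature $-4$ and trivial nullity, and then argue that its linking with the old summand can be removed by a congruence without changing the inertia of either piece. This decoupling is the main obstacle: a priori the off-diagonal linking numbers between the new twist classes and the classes of $w$ could shift the signature, and the delicate point is to exhibit the explicit basis change (or the definiteness of the relevant sub-block) showing they do not. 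An alternative that avoids the bookkeeping is to realize the $d$ full twists as surgery on an unknot encircling the three strands and invoke Litherland's twisting/satellite signature formula, reducing the increment to a sum of Tristram--Levine signatures of small $(3,\cdot)$-torus links; the computation to carry out is then that this increment equals $-4$ uniformly in $d$.

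Finally, for the torus knots $T(3,n)$ I would invoke the classical computation (Murasugi's recursion, or the Brieskorn integer-point count) to obtain $\sigma(T(3,n))$ according to the residue of $n$ modulo $6$, yielding $-8d$ for $n=6d\pm1$ and $-8d\mp2$ for $n=6d\pm2$, and check these against the convention that a positive knot has negative signature. Assembling the alternating base case with $-4$ per full twist then gives $\sigma(K_d)=-4d+\sum_{i=1}^m(a_i-b_i)$.
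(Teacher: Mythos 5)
The paper offers no proof to compare against: Proposition \ref{p: sig} is quoted from Erle \cite{E}, with the torus-knot values going back to Murasugi \cite{M3braid}. Your proposal must therefore stand on its own, and as written it does not. The base case $d=0$ (an alternating link, handled by a Seifert-matrix or Gordon--Litherland computation) and the torus-knot cases (classical) are fine, but the full-twist recursion $\sigma(K_{d+1})=\sigma(K_d)-4$ is deferred rather than proved, and that recursion is not a preliminary lemma feeding into Erle's theorem --- it essentially \emph{is} Erle's theorem.

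Concretely, the decoupling step fails as described. If the $6\times 6$ block $W$ spanned by the new twist classes is nondegenerate, then the symmetrized Seifert form does split orthogonally as $W\oplus W^{\perp}$ and signatures add; but $W^{\perp}$ is not the image of the old surface's first homology, and the form induced on $W^{\perp}$ is a perturbation of the old form whose inertia is exactly the quantity you are trying to control. Symmetrically, splitting off the old summand leaves a $6$-dimensional complement that is not the twist block. Exhibiting a congruence that kills the cross terms while preserving both inertias is the entire content of the computation, and no soft argument can supply it, because full-twist increments of invariants of this kind are genuinely not uniform: in this very paper, Proposition \ref{p: s} shows that the analogous increment for the Rasmussen invariant is $6$ for $|d|$ large but $4$ across $d=0$. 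The same caveat applies to your alternative route: the Litherland/Tristram--Levine approach expresses the increment as a sum of Tristram--Levine signatures, which can jump, so the assertion that the increment equals $-4$ \emph{uniformly in $d$} is precisely the computation that must be carried out (Erle does it by reducing the Seifert matrix to a normal form via an explicit congruence; a different proof via the Burau representation and the Meyer cocycle is also known). As it stands, the proposal correctly identifies the key difficulty and then assumes it away.
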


The next Proposition determines the Rasmussen $s$-invariant of a $3$-braid knot \cite{R}.\footnote{Conjecturally, this same result applies to $2\tau$ as well, where $\tau$ denotes the Ozsv\'ath-Szab\'o concordance invariant \cite{OStau}.}  This result and the next are not needed for the proof of Theorem \ref{thm: main}, but we include them with a view towards future work, as discussed in Section \ref{s: conc}.

\begin{prop}\label{p: s}

With notation as in Proposition \ref{p: sig}, we have \begin{eqnarray*}
s(K_d) = \left\{
\begin{array}{cl}
6d-2-\sigma(K_0), & \quad \mbox{if $d > 0$}; \cr
-\sigma(K_0), & \quad \mbox{if $d = 0$}; \cr
6d+2-\sigma(K_0), & \quad \mbox{if $d < 0$}. \cr
\end{array}
\right.
\end{eqnarray*} In addition, $s(T(3,n)) = 2(n-1)$ for $n \geq 1$ and $2(n+1)$ for $n \leq -1$.

\end{prop}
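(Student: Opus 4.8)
The plan is to handle the three regimes $d=0$, $d>0$, $d<0$ and the torus knots by separate means, using Rasmussen's slice-Bennequin inequality for $s$ as the common backbone. The torus knots are the easiest: $T(3,n)$ with $n \geq 1$ is the closure of the positive braid $(\sigma_1\sigma_2)^n$, hence a positive knot of Seifert genus $(n-1)$, so $s(T(3,n)) = 2g_4 = 2(n-1)$; the case $n \leq -1$ follows from $s(\overline K) = -s(K)$ under mirroring, giving $2(n+1)$. I record this first and then concentrate on the family $K_d = \widehat{h^d w}$ with $w$ as in Proposition \ref{p: normalform}(1).

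Let $D$ be the standard closed-braid diagram of $h^d w$. It has $O(D)=3$ Seifert circles (the three strands) and writhe $w(D) = 6d + \sum_i (b_i - a_i)$, since $h=(\sigma_1\sigma_2)^3$ has exponent sum $6$ and $w$ has exponent sum $\sum_i(b_i - a_i)$. By Proposition \ref{p: sig} at $d=0$ we have $\sigma(K_0) = \sum_i(a_i - b_i)$, so $w(D) = 6d - \sigma(K_0)$. Applying the slice-Bennequin inequality $s \geq w(D) - O(D) + 1$ both to $D$ and to its mirror yields the two-sided window $6d - \sigma(K_0) - 2 \leq s(K_d) \leq 6d - \sigma(K_0) + 2$. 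Since $s(K_d)$ and $\sigma(K_0)$ are both even, this confines $s(K_d)$ to the three values $6d - \sigma(K_0)$ and $6d - \sigma(K_0) \pm 2$, and the task reduces to selecting the correct endpoint in each regime.

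For $d=0$, the closure of the alternating word $w$ is an alternating knot, and Rasmussen's computation gives $s = -\sigma$ for alternating knots; hence $s(K_0) = -\sigma(K_0)$, the middle value. For $d > 0$ I claim that the braid $h^d w$ is \emph{quasipositive}, so that its Bennequin surface realizes the slice genus: for quasipositive knots $s = 2g_4 = w(D) - O(D) + 1$, which selects the bottom of the window and gives $s(K_d) = 6d - 2 - \sigma(K_0)$. The case $d<0$ then follows by mirror symmetry: the mirror of $K_d$ is the knot built from the mirrored alternating word $\overline w$, with $\sigma(K_0)$ replaced by $-\sigma(K_0)$; combining the $d>0$ formula with $s(\overline K) = -s(K)$ reproduces $s(K_d) = 6d + 2 - \sigma(K_0)$. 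The apparent asymmetry (a jump of $\pm 4$ off $d=0$, then $\pm 6$ thereafter) is exactly the passage from the ``alternating'' middle value to the ``quasipositive'' edge of the slice-Bennequin window.

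The main obstacle is the quasipositivity claim for $d>0$. The underlying reason it should hold is that the $d$ positive full twists $h^d$ supply enough positive bands to absorb the negative letters $\sigma_1^{-a_i}$ of $w$ after conjugation; making this precise—exhibiting $h^d w$ as a product of conjugates of positive generators, or equivalently producing a quasipositive band presentation whose Bennequin surface has the predicted Euler characteristic—is the crux, and it must be carried out uniformly in $d \geq 1$ and in the exponents $a_i, b_i$ (the delicate case being small $d$ with large $\sum_i a_i$). Once quasipositivity is established, the remainder is bookkeeping with Proposition \ref{p: sig} and the mirror relation.
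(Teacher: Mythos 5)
Your reductions for the torus knots and for $d=0$ are fine, and the slice--Bennequin window $6d-\sigma(K_0)-2 \leq s(K_d) \leq 6d-\sigma(K_0)+2$ is correctly derived. But the mechanism you propose for selecting the bottom endpoint when $d>0$, namely quasipositivity of $h^d w$, is not merely unproven (you flag it as the crux and leave it open); it is false in general, so no amount of band combinatorics can rescue it. A quasipositive knot satisfies $s = 2g_4 \geq 0$, whereas the very formula you are trying to prove gives $s(K_d) = 6d - 2 - \sigma(K_0)$, which is negative whenever $\sigma(K_0) = \sum_i (a_i - b_i) > 6d - 2$. Concretely, take $d=1$ and $w = \sigma_1^{-11}\sigma_2$: the closure $K_1$ is a knot with $\sigma(K_1) = -4 + (11-1) = 6$, and since $K_{\pm 1}$ is quasi-alternating one has $s(K_1) = -\sigma(K_1) = -6 < 0$, so $h\,\sigma_1^{-11}\sigma_2$ cannot be quasipositive. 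Exactly in your ``delicate case'' of small $d$ and large $\sum_i a_i$ the claim breaks down, so the $d>0$ case (and with it the mirrored $d<0$ case) is a genuine gap.

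The paper's proof avoids this entirely. It first handles $d = 0, \pm 1$ by Baldwin's theorem that $K_d$ is then quasi-alternating, whence $s(K_d) = -\sigma(K_d)$ by Manolescu--Ozsv\'ath, and then for $|d| > 1$ it invokes Van Cott's inequality $6(i-j) - 4 \leq s(K_i) - s(K_j) \leq 6(i-j)$ for $i \geq j$. Taking $i = d$ with the \emph{two} anchors $j = -1$ and $j = 1$ (for $d > 0$) squeezes $s(K_d)$: the lower bound gives $s(K_d) \geq s(K_{-1}) + 6(d+1) - 4 = 6d - 2 - \sigma(K_0)$ and the upper bound gives $s(K_d) \leq s(K_1) + 6(d-1) = 6d - 2 - \sigma(K_0)$, forcing equality. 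If you want to salvage your outline, what is missing is a selection principle valid for all $d \geq 1$ that does not force $s \geq 0$; the two-anchor squeeze via a twisting inequality is precisely such a principle, and some substitute for it---rather than quasipositivity---is what your argument needs.
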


\begin{proof}

When $d = 0$ or $\pm 1$, the knot $K_d$ is quasi-alternating \cite[Theorem 8.7]{B}.  Hence $s(K_d) = -\sigma(K_d)$ \cite[Theorem 1]{MOzs}\footnote{This paper takes as $s$ the quantity we call $s/2$.}, and the result follows in this case from Proposition \ref{p: sig}.  In general, \[ 6(i-j)-4 \leq s(K_i) - s(K_j) \leq 6(i-j) \] for all $i \geq j$ \cite[Theorem 9]{V}.  Notice that we let $s/2$ play the role of the function $\nu$ appearing in \cite{V}.  Assume that $d > 0$, and take $i = d$ and $j = -1$ and $1$ in this inequality.  Then \[6(i+1)-4 \leq s(K_d) - s(K_{-1}) \quad \mbox{and} \quad s(K_d) - s(K_1) \leq 6(d-1).\]  Since $s(K_{-1}) = -\sigma(K_0) - 4$ and $s(K_1) = -\sigma(K_0) + 4$, the result $s(K_d) = 6d-2-\sigma(K_0)$ follows.  The case of $d < 0$ is similar.  The assertion about the torus knot $T(3,n)$ follows from the fact that this knot is positive and so $s(T(3,n)) = 2g(T(3,n)) = 2(n-1)$ when $n \geq 1$, and the analogous fact about the mirror of $T(3,n)$ when $n \leq -1$ \cite[Theorem 4]{R}.

\end{proof}


\begin{prop}\label{p: d bound}

If $K$ is a 3-braid knot with unknotting number one and non-negative signature, then $d \in \{-1,0,1,2 \}$ when $K$ is put in normal form.

\end{prop}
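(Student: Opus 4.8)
The plan is to combine the signature constraint from the Murasugi bound with Erle's signature formula to pin down the possible values of $d$. The starting point is the classical inequality $|\sigma(K)| \leq 2u(K)$, so that a knot with unknotting number one has $|\sigma(K)| \leq 2$, and by hypothesis $0 \leq \sigma(K) \leq 2$. I would first dispose of the torus-knot cases $w = \sigma_1\sigma_2$ and $w = (\sigma_1\sigma_2)^2$ using the explicit signature values in Proposition \ref{p: sig}: since $\sigma(T(3,6d\pm1)) = -8d$ and $\sigma(T(3,6d\pm2)) = -8d\mp2$, the requirement $0 \leq \sigma \leq 2$ forces $d \leq 0$ and in fact $d \in \{-1,0\}$ for these knots (one checks each residue class directly), which lies inside $\{-1,0,1,2\}$.

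For the generic case $w = \sigma_1^{-a_1}\sigma_2^{b_1}\cdots\sigma_1^{-a_m}\sigma_2^{b_m}$ of Proposition \ref{p: normalform}(1), the relevant formula is $\sigma(K_d) = -4d + \sum_{i=1}^m(a_i - b_i)$. Writing $S = \sum_{i=1}^m (a_i - b_i)$, the constraint becomes $0 \leq S - 4d \leq 2$. The idea is that $S$ cannot be too large in absolute value relative to the structure of the word, so that solving $4d = S - \sigma(K)$ with $\sigma(K) \in \{0,1,2\}$ confines $d$ to a short interval. Concretely, I would aim to bound $|S|$; since each $a_i, b_i \geq 1$, one has crude control, but more importantly the hypothesis $u(K)=1$ should, via the Montesinos trick and the cyclic structure, force the word to be "nearly balanced" between the $a_i$'s and $b_i$'s, keeping $S$ small. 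Given $0 \leq S - 4d$ and $S - 4d \leq 2$, we get $d = (S - \sigma(K))/4$, and a bound of the shape $|S| < 12$ (say) would immediately yield $d \in \{-1,0,1,2\}$ after checking integrality and the residue of $S$ modulo $4$.

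The main obstacle will be controlling $S = \sum(a_i - b_i)$ for the alternating word $w$. The signature formula alone does not bound $S$; one needs to exploit the unknotting-number-one hypothesis more forcefully. I expect the cleanest route is to invoke the determinant/homology constraint: since $u(K)=1$ implies $H_1(\Sigma(K))$ is cyclic (from the Montesinos trick, as noted in the Methodology), and the determinant of a 3-braid knot in normal form can be computed from $d$ and the exponents, combining the cyclicity of $H_1$ with $0 \le \sigma(K) \le 2$ should eliminate the cases $|d| \geq 2$ (other than $d=2$) by showing their determinants either fail to be the order of a cyclic group arising from half-integer surgery or violate the signature window. In practice I would set up the two-sided inequality $0 \le -4d + S \le 2$, argue that large $|d|$ forces $|S|$ to be comparably large, and then show such large-$S$ alternating words produce knots whose signature or determinant is incompatible with $u(K)=1$, leaving only $d \in \{-1,0,1,2\}$.

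Finally, I would double-check the boundary value $d = 2$, which is included in the conclusion: this presumably corresponds to $\sigma(K)$ near the top of its allowed range, realized by a word with $S$ close to $8$ or $10$, and I would confirm that such configurations are genuinely attainable rather than excluded, consistent with the asymmetry of the interval $\{-1,0,1,2\}$ about zero (reflecting that we have restricted to \emph{non-negative} signature, which breaks the $d \leftrightarrow -d$ symmetry one would otherwise expect).
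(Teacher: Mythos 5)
Your treatment of the torus-knot cases is fine: there the window $0 \le \sigma(K) \le 2$ together with Proposition~\ref{p: sig} suffices (though note the $d$ in the formulas $\sigma(T(3,6d\pm1)) = -8d$ is a parameter, not the normal-form exponent; one must translate via $T(3,3d+1)$, $T(3,3d+2)$). The genuine gap is in the generic case~(1) of Proposition~\ref{p: normalform}. Setting $S = \sum_i (a_i - b_i)$, the constraint $0 \le -4d + S \le 2$ is a single linear relation in the two unknowns $d$ and $S$: it says exactly that $S = 4d + \sigma(K)$ with $\sigma(K) \in \{0,2\}$, and it is satisfiable for every integer $d$. Consequently ``bounding $|S|$'' is not an auxiliary step but is literally equivalent to bounding $d$, and your proposed mechanisms for achieving it cannot work: the hypothesis $u(K)=1$ does not force the word to be ``nearly balanced'' in any sense independent of the conclusion; cyclicity of $H_1(\Sigma(K))$ is a condition on the group structure of the branched cover that has no interaction whatsoever with $d$; and there is no determinant-size obstruction to unknotting number one (the twist knots $T_n$ of Figure~\ref{f: twistknot} have $u=1$ and determinant $2n-1$, arbitrarily large). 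So the argument is circular precisely at its key step.

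The missing idea is a \emph{second} invariant whose dependence on $d$ differs from that of the signature, and this is exactly what the paper uses: the Rasmussen invariant, computed for 3-braid knots in Proposition~\ref{p: s}. Both $\sigma$ and $s$ change by at most $2$ under a single crossing change, with signs dictated by the sign of the crossing (\cite[Proposition 2.1]{CL}, \cite[Corollary 4.3]{R}), so $u(K)=1$ confines both $\sigma(K)$ and $s(K)$ to explicit width-two windows. Now add the two formulas: by Propositions~\ref{p: sig} and~\ref{p: s},
\[
\sigma(K_d) + s(K_d) \;=\;
\begin{cases}
2d-2, & d>0,\\
2d+2, & d<0,
\end{cases}
\]
so the word-dependent quantity $\sigma(K_0)=\sum_i(a_i-b_i)$ cancels, and what remains is a function of $d$ alone, which the two windows then confine to the stated range after conditioning on the sign of the unknotting crossing and the value of $\sigma(K)$, as in the paper's proof. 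This cancellation is the entire reason Proposition~\ref{p: s} is established, and it is also the true source of the asymmetry of $\{-1,0,1,2\}$ that you noticed at the end: the $\mp 2$ in Proposition~\ref{p: s} is asymmetric in the sign of $d$, whereas nothing in the signature data alone distinguishes $d$ from $-d$. Without this second constraint (or a comparable one), the signature cannot yield the proposition.
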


\begin{proof}


Recall that if $K_-$ and $K_+$ are two knots which differ at a crossing, which is negative in $K_-$ and positive in $K_+$, then \[ 0 \leq \sigma(K_-) - \sigma(K_+) \leq 2 \quad \mbox{and} \quad -2 \leq s(K_-) - s(K_+) \leq 0 \] (\cite[Proposition 2.1]{CL}, \cite[Corollary 4.3]{R}).  In particular, the bound $|\sigma(K)| \leq 2 u(K)$ follows.  Thus, if $K$ is a torus knot, then the result follows from this bound and Proposition \ref{p: sig}. In case $K$ is not a torus knot, we suppose first that $K$ can be unknotted by changing a negative crossing to a positive one.  Then \[ 0 \leq \sigma(K) \leq 2 \quad \mbox{and} \quad -2 \leq s(K) \leq 0. \] Conditioning on the possibilities that $\sigma(K) = 0$ or $2$ and $d$ is positive or not, and applying Propositions \ref{p: sig} and \ref{p: s}, we obtain the desired bound on $d$.  The same reasoning applies if $K$ can be unknotted by changing a negative crossing.

\end{proof}









\subsection{The Goeritz form.}\label{ss: Goeritz}

Consider a diagram $D$ of a knot $K$.  It splits the plane into connected regions, which we color white and black in checkerboard fashion.  With respect to this coloration, each crossing $c$ in $D$ has an incidence number $\mu(c) = \pm 1$ as displayed in Figure \ref{f: crossingincidence}.

\begin{figure}
\centering
\includegraphics[width=3in]{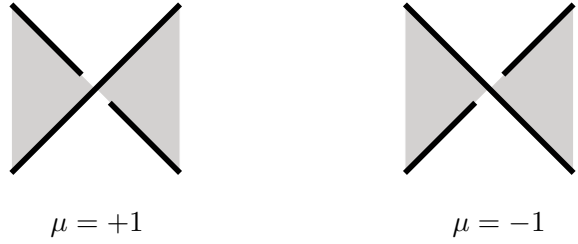}
\put(-200,-20){$\mu = +1$}
\put(-48,-20){$\mu = -1 $}
\caption{The incidence number of a crossing.}  \label{f: crossingincidence}
\end{figure}

We form a planar graph by drawing a vertex in every white region and an edge for every crossing that joins two white regions.  Associate the label $\mu(e) := \mu(c)$ to the edge $e$ corresponding to the crossing $c$, and mark a single vertex.  We refer to this decorated plane drawing $\Gamma$ as the \emph{white graph} corresponding to $D$ with the choice of marked region.

The {\em Goeritz matrix} $G = (g_{ij})$ corresponding to $\Gamma$ is defined as follows (cf. \cite[pp. 98-99]{Lick}).  Enumerate the vertices of $\Gamma$ by $v_1,\dots,v_{r+1}$, where $v_{r+1}$ denotes the marked vertex, and for $1 \leq i,j \leq r$, set \[ g_{ij} = \sum_{\small \begin{array}{c} e \text { joining } \\ v_i \text { and } v_j \end{array}} \mu(e), \quad i \ne j, \quad \quad \text{and} \quad \quad g_{ii} = -\sum_{\small \begin{array}{c} e \text{ incident } \\ v_i \text{ once} \end{array}} \mu(e).\]  Observe that we exclude loop edges in the second summation.  The matrix $G$ is a symmetric $r \times r$ matrix, so induces a quadratic form (denoted by the same symbol), and $|\det(G)| = \det(K)$.

When $D$ is an alternating diagram, it has a preferred coloration according to the convention that all crossings have incidence number $\mu = + 1$.  With this convention fixed, the Goeritz form of an alternating knot diagram is negative-definite.  The Goeritz matrix takes a particularly simple form for the case of an alternating 3-braid knot, when we mark the region which meets the braid axis.  Let $K$ denote the closure of the alternating word $\sigma_1^{-a_1} \sigma_2^{b_1} \cdots \sigma_1^{-a_m} \sigma_2^{b_m}$ with $m \geq 1$ and all $a_i, b_i \geq 1$.  The white graph $\Gamma$ consists of vertices $v_1,\dots,v_r$ in a cycle, $r := \sum_{i=1}^m b_i$, together with the marked vertex $v_{r+1}$.  The vertex $v_i$ is connected to $v_{r+1}$ by $a_l$ parallel edges if $i = 1+b_1+\cdots+b_{l-1}$, and is not adjacent to it otherwise.  Provided that $r \geq 3$, the corresponding Goeritz matrix $G = (g_{ij})$ is given by  \[ g_{ij} = \left\{
\begin{array}{cl}
-a_l-2 , & \quad \mbox{if $i=j=1+b_1+\cdots+b_{l-1}$;} \cr
-2 , & \quad \mbox{if $i=j$ is not of this form;} \cr
1 , & \quad \mbox{if $|i-j|=1$ or $r-1$;} \cr
0 , & \quad \mbox{otherwise}.
\end{array}
\right.
\]  When $r = 1$, we obtain $G = (-a_1)$, and when $r=2$, we obtain \[G = \left( \begin{matrix} -a_1-2 & 2 \\ 2 & -a_2-2 \end{matrix} \right),\] taking the bottom-right entry to be $-2$ in case the value $a_2$ is undefined.

We close with a pair of examples, to which we return in Subsection \ref{ss: example}.  The knot $8_7$ is the closure of $\sigma_1^{-4} \sigma_2 \sigma_1^{-1} \sigma_2^2$ (Figure \ref{f: 3-braid}), and $10_{79}$ is the closure of $\sigma_1^{-3} \sigma_2^2 \sigma_1^{-2} \sigma_2^3$.  The corresponding Goeritz matrices are \[G_{8_7} = \left( \begin{matrix} -6 & 1 & 1 \\ 1 & -3 & 1 \\ 1 & 1 & -2 \end{matrix} \right) \quad \text{and} \quad G_{10_{79}} = \left( \begin{matrix} -5 & 1 & 0 & 0 & 1 \\ 1 & -2 & 1 & 0 & 0 \\ 0 & 1 & -4 & 1 & 0 \\ 0 & 0 & 1 & -2 & 1 \\ 1 & 0 & 0 & 1 & -2 \end{matrix} \right).\]  Observe that if we change the crossing in the diagram of $8_7$ indicated, then the result is the unknot, and a Goeritz form for the resulting diagram is gotten by increasing the diagonal entry $g_{22} =-3$ in $G_{8_7}$ by $2$.

\begin{figure}
\begin{center}
\includegraphics[width=5in]{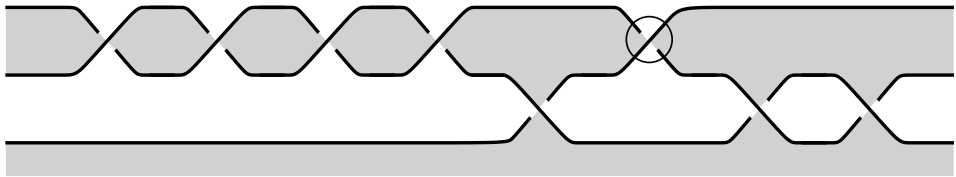}
\caption{The braid diagram representing the knot $8_7$, with an unknotting crossing indicated.  The braid axis meets the white region at the top of the diagram.} \label{f: 3-braid}
\end{center}
\end{figure}



\section{Intersection pairings and correction terms.}\label{s: HF}

Here we recall the basic facts about intersection pairings and $\spc$ structures on 4-manifolds, and the necessary input from Heegaard Floer homology.  A more extensive summary of the relevant material about the latter appears in \cite[Section 2]{OSunknotting}.  The section concludes with the statement of the versatile Lemma \ref{l: d diff 2}, which we put to use in Section \ref{s: criterion} towards the proof of Theorem \ref{t: criterion}.


\subsection{Intersection pairings and $\spc$ structures}\label{ss: int pair}

Here and throughout we take (co)homology groups with integer coefficients.  When $X$ is a compact, oriented 4-manifold with $H_2(X)$ torsion-free, there is an {\em intersection pairing} on its homology \[ \langle \cdot,\cdot \rangle : H_2(X) \otimes H_2(X) \to \Z .\]  This pairing extends to all of $H_2(X) \otimes \Q$ by linearity. The pairing is non-degenerate if and only if $\del X$ is a union of rational homology 3-spheres.  Provided that the pairing is non-degenerate and $H_1(X)$ is torsion-free, the cohomology group $H^2(X)$ is identified with the dual group $\Hom(H_2(X),\Z) \subset H_2(X) \otimes \Q$, and the pairing $\langle \cdot,\cdot \rangle$ restricts to a $\Q$-valued pairing on it.  In this way we may regard $H_2(X)$ as a subgroup of $H^2(X)$.  In topological terms, a class in $H_2(X)$ gets identified with its Poincar\'e dual in $H^2(X,\del X)$, which includes into $H^2(X)$ according to a portion of the long exact sequence in cohomology of the pair $(X,\del X)$: \[ 0 \to H^2(X, \del X) \to H^2(X) \to H^2(\del X) \to H^3(X, \del X).\]  If, moreover, $H_1(X) = 0$, then the last term in this sequence vanishes, and in this way we may identify $H^2(\del X)$ with the quotient $H^2(X) / H_2(X)$.  The {\em discriminant} of the pairing $\langle \cdot,\cdot \rangle$ is thus the order of $H^2(\del X)$.

In terms of a fixed coordinate system, let $\{ v_1,\dots,v_k \}$ denote a basis for $H_2(X)$, and express its pairing with respect to this basis by the symmetric matrix $M$.  The pairing endows the dual group $H^2(X)$ with a dual basis $\{ v_1^*,\cdots, v_k^* \}$, and the $\Q$-valued pairing on $H^2(X)$ is expressed by the inverse matrix $M^{-1}$ with respect to it.  In terms of the chosen basis, we have an identification \[ H^2(\del X) \cong \cok(M),\]  and the discriminant of the pairing is the determinant of $M$ in absolute value.

A typical way in which this setup arises is the case of a rational homology sphere $Y$ presented by surgery on an oriented, integer-framed link $\L \subset S^3$ with $k$ components and linking matrix $M$.  Let $X$ denote the trace of the surgery.  Given an oriented link component $L_i$, cap off the core of the handle attachment along it with a pushed-in oriented Seifert surface $F_i$ for $L_i$, orient the resulting surface consistently with $F_i$, and denote its class in $H_2(X)$ by $v_i$.  We obtain a basis $\{v_1,\dots,v_k\}$ for $H_2(X)$ in this way, with respect to which the intersection pairing on homology is given by the matrix $M$.

To an oriented 3-manifold $Y$ or 4-manifold $X$ we can associate its collection of $\Spc$ structures.  This set forms an affine space over the cohomology group $H^2(\cdot)$, and the {\em first Chern class} $c_1: \Spc(\cdot) \to H^2(\cdot)$ is related to the action by the formula $c_1(\s + \alpha) = c_1(\s)+ 2 \alpha$ for all $\s \in \Spc(\cdot)$ and $\alpha \in H^2(\cdot)$.  A class $\alpha \in H^2(X)$ is {\em characteristic} if \[ \langle \alpha,x \rangle \equiv \langle x, x \rangle  \; (\mod 2), \text{ for all } x \in H_2(X),\] and the set of characteristic classes is denoted $\Char(X)$.  For the case under consideration, the map $c_1: \Spc(X) \to H^2(X)$ is a 1-1 map with image $\Char(X)$.  Furthermore, when the discriminant of the pairing is odd, the map $c_1: \Spc(\del X) \to H^2(\del X)$ sets up a bijection, since 2 is a unit in $H^2(\del X)$.  The pairing in this case restricts to a non-degenerate pairing on $H_2(X) \otimes \Z / 2 \Z$.  Expressed with respect to the dual basis $\{ v_1^*,\cdots, v_k^* \}$, a characteristic class $\alpha \in H^2(X)$ is precisely one whose reduction $(\mod 2)$ agrees with that of the diagonal of $M$.

Adaptations of the preceding notions exist in the presence of torsion, but they will not be necessary here.  In summary, our working hypothesis is that $X$ is a compact 4-manifold with $H_1(X) = 0$ and $H_2(X)$ torsion-free.


\subsection{Correction terms and sharp 4-manifolds}\label{ss: sharp}

Recall that a rational homology 3-sphere $Y$ is an {\em L-space} if $HF^+_{red}(Y) = 0$, or equivalently if $\rk \; \widehat{HF}(Y) = |H_1(Y)|$.  In \cite{OSgrading}, Ozsv\'ath and Szab\'o show how to associate a numerical invariant $d(Y,\t) \in \Q$ called a {\em correction term} to an oriented, rational homology sphere $Y$ equipped with a $\spc$ structure $\t$. They prove that this invariant obeys the relation $d(-Y,\t) = - d(Y,\t)$, and if $Y$ is the boundary of a negative definite 4-manifold $X$, then \begin{equation}\label{e: d bound} c_1(\s)^2 + b_2(X) \leq 4d(Y,\t) \end{equation} for all $\s \in \Spc(X)$ for which the restriction $\s|Y$ equals $\t \in \Spc(Y)$ \cite[Theorem 9.6]{OSgrading}.

\begin{defin}\label{d: sharp} \hspace{.1in}

\begin{enumerate}

\item If $Y$ is a rational homology sphere contained in a negative-definite 4-manifold $X$, then a class $c_1(\s)$ is a {\em maximizer} if the value $c_1(\s)^2$ is maximal over all $\spc$ structures on $X$ which restrict to $\s|Y \in \Spc(Y)$.

\item  A negative definite 4-manifold $X$ with L-space boundary $Y$ is {\em sharp} if, for every $\t \in \Spc(Y)$, there is some $\s \in \Spc(X)$ with $\s|Y = \t$ that attains equality in the bound (\ref{e: d bound}).

\end{enumerate}


\end{defin}


Now suppose that $Y$ is an L-space presented by surgery on an oriented, integer-framed link $\L \subset S^3$.  Let $k$ denote the number of link components, $W$ the trace of surgery, and suppose that the linking matrix $M$ is negative-definite and has odd determinant.  Suppose that there is another oriented, framed link $\L' \subset S^3$ with the same linking matrix $M$, for which surgery on $\L'$ yields another L-space $Y'$, and for which the trace of surgery $W'$ is sharp.  Thus, we have a series of identifications \begin{equation}\label{e: spc W} \Spc(W) \overset{c_1}{\to} \Char(W) \cong \Char(W') \overset{c_1}{\gets} \Spc(W') \end{equation} and \begin{equation}\label{e: spc Y} \Spc(Y) \overset{c_1}{\to} H^2(Y) \cong \cok(M) \cong H^2(Y') \overset{c_1}{\gets} \Spc(Y').\end{equation}  Note that under the correspondence (\ref{e: spc W}), the value $c_1(\cdot)^2$ is preserved.  Suppose lastly that $-Y$ is the boundary of a negative definite 4-manifold $X$ with $H_1(X) = 0$ and $H_2(X)$ torsion-free.

\begin{lem}\label{l: d diff}

Under the stated assumptions, the restriction map $\Spc(X \cup_Y W) \to \Spc(Y)$ surjects, and using the identification $\Spc(Y) \leftrightarrow \Spc(Y')$ of (\ref{e: spc Y}), we have the inequality:

\begin{equation}\label{e: d diff}
\max_{\small \begin{array}{c} \s \in \Spc(X \cup_Y W) \\ \s|Y = \t \end{array} } c_1(\s)^2 + b_2(X \cup_Y W) \leq 4 d(Y',\t)- 4d(Y,\t).
\end{equation}  Moreover, if $X$ is sharp, then (\ref{e: d diff}) is an equality for every $\t \in \Spc(Y)$.


\end{lem}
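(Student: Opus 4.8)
The plan is to analyze the closed manifold $Z := X \cup_Y W$ directly and feed its intersection pairing into the correction term machinery via two complementary 4-manifolds. First I would establish that $Z$ is closed, oriented, and smooth, and compute its intersection form: since $Y$ is a rational homology sphere, Mayer--Vietoris gives $H_2(Z) \cong H_2(X) \oplus H_2(W)$ with the pairing being the orthogonal direct sum of the pairings on $X$ and $W$. Both summands are negative-definite (the former by hypothesis, the latter because its pairing is given by the negative-definite matrix $M$), so $Z$ is a closed, negative-definite 4-manifold. This identifies $\Char(Z)$ and lets me relate $\spc$ structures on $Z$ to pairs of $\spc$ structures on $X$ and $W$ that agree on $Y$; in particular the restriction $\Spc(Z) \to \Spc(Y)$ surjects because any $\t \in \Spc(Y)$ extends over both $X$ (which has $Y$ as boundary, after orientation bookkeeping) and $W$, and these glue.

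Next I would compare the two ways of bounding the correction term $d(Y,\t)$. The central input is the bound (\ref{e: d bound}): applied to $W$ (whose boundary is $Y$, presented by surgery on $\L$) one gets, for $\s' \in \Spc(W)$ restricting to $\t$,
\[
c_1(\s')^2 + b_2(W) \leq 4 d(Y,\t),
\]
and applied to the sharp manifold $W'$ (boundary $Y'$) one gets \emph{equality} for a maximizing $\s'$, realizing $4 d(Y',\t)$. The crucial observation is that under the identification (\ref{e: spc W}) of $\Char(W) \cong \Char(W')$ coming from the shared linking matrix $M$, the self-intersection $c_1(\cdot)^2$ is preserved, so the maximal value of $c_1(\s')^2$ over $\s' \in \Spc(W)$ restricting to $\t$ equals $4 d(Y',\t) - b_2(W)$ via sharpness on the primed side. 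Then I would write any $\s \in \Spc(Z)$ restricting to $\t$ as gluing together $\s|X$ and some $\s|W$ restricting to $\t$ on $Y$; by additivity $c_1(\s)^2 = c_1(\s|X)^2 + c_1(\s|W)^2$ and $b_2(Z) = b_2(X) + b_2(W)$. Maximizing over $\s$ restricting to $\t$ factors as independently maximizing over the $X$-part and the $W$-part. Substituting the $W$-maximum $4 d(Y',\t) - b_2(W)$ and recombining yields
\[
\max_{\s|Y = \t} c_1(\s)^2 + b_2(Z) = \Big(\max_{\s|X, \, \s|Y=\t} c_1(\s|X)^2 + b_2(X)\Big) + 4 d(Y',\t).
\]
Now applying (\ref{e: d bound}) to $X$ itself (with boundary $-Y$, so the bound reads in terms of $d(-Y,\t) = -d(Y,\t)$) bounds the $X$-bracket above by $4 d(-Y,\t) = -4 d(Y,\t)$, giving exactly the desired inequality (\ref{e: d diff}).

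For the equality statement, I would observe that the only inequality used in the last step is the bound (\ref{e: d bound}) applied to $X$; if $X$ is sharp, that bound is an equality for a suitable $\s|X$ restricting to each $\t \in \Spc(Y)$, so the whole chain becomes an equality. The main obstacle I anticipate is the careful orientation and $\spc$-restriction bookkeeping at the gluing interface: one must be consistent about whether $Y$ or $-Y$ bounds each piece, ensure that the identification (\ref{e: spc Y}) used to transport $\t$ from $Y$ to $Y'$ is compatible with the identification (\ref{e: spc W}) on the 4-manifold level (so that ``the same $\t$'' genuinely indexes corresponding self-intersection numbers), and verify that the odd-determinant hypothesis makes $c_1$ a bijection on $\Spc(Y)$ so that these correspondences are well-defined. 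Once the orientations and the $c_1$-preserving identification are pinned down, the additivity of $c_1(\cdot)^2$ and $b_2$ over the gluing makes the rest a formal manipulation of the two instances of (\ref{e: d bound}).
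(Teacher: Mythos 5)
Your overall route is the same as the paper's: decompose a $\spc$ structure on $Z = X \cup_Y W$ into its restrictions to $X$ and $W$, use additivity of $c_1(\cdot)^2$ and of $b_2$, transport the $W$-side maximum to $(W',Y')$ via the shared linking matrix and the sharpness of $W'$, and bound the $X$-side by (\ref{e: d bound}) applied to $X$ with boundary $-Y$, with sharpness of $X$ upgrading that bound to an equality. However, your opening step contains a genuine error: Mayer--Vietoris does \emph{not} give $H_2(Z) \cong H_2(X) \oplus H_2(W)$ with the orthogonal direct sum pairing. Since $Z$ is closed (and its second homology is torsion-free here), Poincar\'e duality makes its intersection form unimodular, whereas the orthogonal direct sum of the pairings on $X$ and $W$ has discriminant $|H^2(Y)|^2$, which is $D^2 > 1$ in the cases of interest; the claimed isomorphism is therefore impossible. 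The correct statement, which the paper uses, is that $H_2(X)\oplus H_2(W)$ embeds in $H_2(Z)$ as a finite-index sublattice, and dually the cohomological Mayer--Vietoris sequence gives $0 \to H^2(Z) \to H^2(X)\oplus H^2(W) \to H^2(Y) \to 0$, exhibiting $H^2(Z)$ as a sublattice of $H^2(X)\oplus H^2(W)$, the latter equipped with the $\Q$-valued dual pairings. It is this embedding, not the claimed splitting, that yields both the bijection between $\{\s \in \Spc(Z): \s|Y=\t\}$ and pairs $(\s_X,\s_W)$ agreeing on $Y$, and the additivity $c_1(\s)^2 = c_1(\s|X)^2 + c_1(\s|W)^2$, each square being computed in its own lattice. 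Your subsequent manipulations use only these two (correct) consequences, so the argument is repairable, but as written it rests on a false premise.

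A second, smaller gap: you assert that the restriction $\Spc(Z) \to \Spc(Y)$ surjects because any $\t \in \Spc(Y)$ ``extends over both $X$ and $W$, and these glue.'' Extension of $\spc$ structures over a $4$-manifold with boundary is not automatic (it fails, for instance, for a disk bundle over $S^2$ of even Euler number). The paper's argument is needed: $H_1(X)=0$ forces $H^2(X) \to H^2(Y)$ to surject; $\Char(X)$ is a coset of $2H^2(X)$; and since $H^2(Y)$ has odd order, multiplication by $2$ is invertible there, so $\Spc(X) \to \Spc(Y)$ surjects (and likewise for $W$), after which the gluing bijection gives surjectivity for $Z$. You invoked the odd-determinant hypothesis only to make $c_1$ bijective on $\Spc(Y)$, which is a different, and easier, point than the one where that hypothesis actually does its work.
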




\begin{proof}

The closed-up manifold $X \cup_Y W$ can be obtained by attaching 2-handles and a 4-handle to $X$.  Hence $H^2(X \cup W)$ is a free abelian group, $H^3(X \cup W) = 0$, and $H^1(Y) = 0$ by assumption.  Consider the Mayer-Vietoris sequence in cohomology associated to the natural decomposition of $X \cup_Y W$.  A portion of this sequence reads \[ 0 \to H^2(X \cup W) \to H^2(X) \oplus H^2(W) \to H^2(Y) \to 0. \]


\noindent Given an inclusion of a 3- or 4-manifold into a 4-manifold, the mapping $c_1$ commutes with the restriction maps on $\Spc(\cdot)$ and $H^2(\cdot)$.  Therefore, the preceding short exact sequence implies the bijection of sets \[ \begin{array}{rl} \{ \s \in \Spc(X \cup W) \; | \; \s|Y = \t \} &\overset{\sim}{\longrightarrow} \{ (\s_X, \s_W) \in \Spc(X) \times \Spc(W) \; | \; \s_X|Y = \s_W|Y = \t \} \\ \noalign{\bigskip} \s &\longmapsto (\s|X, \s|W). \end{array} \]  Since both $H_1(X)$ and $H_1(W)$ vanish, the long exact sequences in cohomology for the pairs $(X,Y)$ and $(W,Y)$ imply that the restriction maps $H^2(X) \to H^2(Y)$ and $H^2(W) \to H^2(Y)$ surject.  The subset $\Char(X) \subset H^2(X)$ is a coset of $2 H^2(X)$, which has index $2^{b_2(X)}$ in $H^2(X)$.  Since $H^2(Y)$ has odd order, it follows that the restriction map $\Spc(X) \to \Spc(Y)$ surjects as well.  The same argument applies to the map $\Spc(W) \to \Spc(Y)$, and now the above correspondence shows that the restriction map $\Spc(X \cup W) \to \Spc(Y)$ surjects, too.

Equip the free abelian groups $H^2(X \cup W)$ and $H^2(X) \oplus H^2(W)$ with their respective intersection pairings, thereby recasting the map $H^2(X \cup W) \into H^2(X) \oplus H^2(W)$ as an inclusion of (negative-definite) lattices.  With this view, the correspondence \[ c_1(\s) \mapsto (c_1(\s|X),c_1(\s|W)) \] enables us to compute \[ c_1(\s)^2 = c_1(\s|X)^2 + c_1(\s|W)^2, \] where each term is squared within its respective lattice.  By virtue of this fact, we obtain \begin{equation}\label{e: max s} \max_{\small \begin{array}{c} \s \in \Spc(X \cup W) \\ \s|Y = \t \end{array} } c_1(\s)^2 = \max_{\small \begin{array}{c} \s_X \in \Spc(X) \\ \s_X|Y = \t \end{array} } c_1(\s_X)^2 + \max_{\small \begin{array}{c} \s_W \in \Spc(W) \\ \s_W|Y = \t \end{array} } c_1(\s_W)^2. \end{equation}  In other words, a maximizer $c_1(\s)$ with $\s \in \Spc(X)$ decomposes into a pair of maximizers $(c_1(\s|X),c_1(\s|W))$.  By the correspondence (\ref{e: spc W}), we can replace the pair $(W,Y)$ appearing in the last term of (\ref{e: max s}) by the pair $(W',Y')$.  Now add the quantity $b_2(X \cup W) = b_2(X) + b_2(W)$ to both sides of this equation and invoke the inequality (\ref{e: max s}) and the sharpness hypothesis on $W'$ to obtain the inequality (\ref{e: d diff}).  The equality in case $X$ is sharp follows as well.

\end{proof}

In order to apply Lemma \ref{l: d diff}, we need to rephrase it with respect to a fixed coordinate system and invoke Donaldson's Theorem A.  To begin with, we focus on the restriction map $H^2(X \cup W) \to H^2(Y)$.  We have a sequence of inclusions of lattices which are dual to one another: \[ H_2(X) \oplus H_2(W) \into H_2(X \cup W) \cong H^2(X \cup W) \into H^2(X) \oplus H^2(W).\]  Identify the chosen basis $\{ v_1,\dots,v_k \}$ for $H_2(W)$ with its image under the first inclusion.  Given a class $\alpha \in H^2(X \cup W)$, its image under the composite $H^2(X \cup W) \into H^2(X) \oplus H^2(W) \onto H^2(W)$ is given in the dual basis $\{v_1^*,\cdots,v_k^* \}$ by $(\langle \alpha, v_1 \rangle ,\cdots, \langle \alpha,v_k \rangle)$.  Therefore, the reduction of this class in $\cok(M)$ specifies a class in $H^2(Y)$, and this is the restriction $[\alpha]$.

On the other hand, $X \cup W$ is a closed, smooth, negative-definite 4-manifold, so by Donaldson's Theorem A, the lattice $H^2(X \cup W)$ is isomorphic to $\Z^n$, $n = b_2(X \cup W)$, equipped with the standard negative definite inner product.  Choose a (negative) orthonormal basis for it.  Then the condition for a class in $H^2(X \cup W)$ to be characteristic becomes \[ \alpha \equiv {\bf 1} \; (\mod 2) \] when $\alpha$ is expressed with respect to this basis, where ${\bf 1}$ denotes the vector of all $1$'s and length $b_2(X \cup W)$.

We summarize the foregoing in the following.

\begin{lem}\label{l: d diff 2}

Under the stated assumptions, and using the identification $\Spc(Y) \leftrightarrow \cok(M) \leftrightarrow \Spc(Y')$ of (\ref{e: spc Y}), we have the inequality:

\begin{equation}\label{e: d diff 2}
\max_{\small \begin{array}{c} \alpha \equiv {\bf 1} \; (\mod 2) \\ \left[ \alpha \right] = \t \end{array} } \alpha^2 + b_2(X \cup_Y W) \leq  4 d(Y',\t)- 4d(Y,\t).
\end{equation}  Moreover, if $X$ is sharp, then (\ref{e: d diff 2}) is an equality for every $\t \in \Spc(Y)$. In any event, a maximizer $\alpha \in H^2(X \cup W)$ restricts to a maximizer $(\langle \alpha,v_1 \rangle,\dots, \langle \alpha,v_k \rangle) \in H^2(W)$.

\end{lem}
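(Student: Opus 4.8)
The plan is to combine Lemma \ref{l: d diff} with the coordinate identifications set up in the paragraphs immediately preceding the statement; the lemma is essentially a transcription of the inequality (\ref{e: d diff}) into the orthonormal basis furnished by Donaldson's Theorem A, so the work is one of translation rather than of new content.

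First I would invoke Donaldson's Theorem A on the closed, smooth, negative-definite 4-manifold $X \cup_Y W$ to fix a (negative) orthonormal basis identifying the lattice $H^2(X \cup W)$ with $\Z^n$, $n = b_2(X \cup W)$. Under this identification the square $c_1(\s)^2$ of a $\spc$ structure $\s$ becomes $\alpha^2$, where $\alpha$ is the coordinate vector of $c_1(\s)$ and the square is taken in the standard negative-definite form; and, as recorded above, $c_1(\s)$ being characteristic translates into the congruence $\alpha \equiv {\bf 1} \; (\mod 2)$. Since $c_1 \colon \Spc(X \cup W) \to \Char(X \cup W)$ is a bijection, the maximum over $\s$ with $\s|Y = \t$ appearing in (\ref{e: d diff}) becomes a maximum over characteristic vectors $\alpha$ subject to a single restriction constraint.

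Next I would translate that restriction constraint. Using the coordinate description of the restriction map $H^2(X\cup W) \to H^2(Y)$ established above---namely that the composite $H^2(X\cup W) \into H^2(X) \oplus H^2(W) \onto H^2(W)$ sends $\alpha$ to $(\langle \alpha, v_1 \rangle, \dots, \langle \alpha, v_k \rangle)$, whose reduction in $\cok(M) \cong H^2(Y)$ is the restriction $[\alpha]$---the condition $\s|Y = \t$ is precisely $[\alpha] = \t$ under the identification $\Spc(Y) \leftrightarrow \cok(M)$ of (\ref{e: spc Y}). Substituting these two translations into (\ref{e: d diff}) yields (\ref{e: d diff 2}) verbatim, and the equality clause for sharp $X$ is inherited directly from the corresponding clause of Lemma \ref{l: d diff}.

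Finally, for the assertion that a maximizer restricts to a maximizer, I would appeal to the product decomposition (\ref{e: max s}) obtained inside the proof of Lemma \ref{l: d diff}: a maximizing $\s$ splits as a pair of maximizers $(c_1(\s|X), c_1(\s|W))$, and by the coordinate description of the restriction map its $W$-component is exactly the vector $(\langle \alpha, v_1 \rangle, \dots, \langle \alpha, v_k \rangle) \in H^2(W)$. Since the result is a reformulation, I expect no genuine obstacle; the one point demanding care is the mutual compatibility of the three identifications in play---the Donaldson orthonormal basis on $H^2(X \cup W)$, the dual basis $\{v_1^*, \dots, v_k^*\}$ on $H^2(W)$, and the isomorphism $H^2(Y) \cong \cok(M)$---so that $[\alpha] = \t$ faithfully encodes $\s|Y = \t$ and the bracketed quantity $\alpha^2$ is honestly the self-pairing in the standard negative-definite form rather than some rescaling of it.
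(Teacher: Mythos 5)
Your proposal is correct and follows essentially the same route as the paper: the paper likewise treats this lemma as a pure translation of Lemma \ref{l: d diff}, using Donaldson's Theorem A to identify $\Char(X \cup W)$ with vectors $\alpha \equiv {\bf 1} \; (\mod 2)$ in $\Z^n$, the coordinate description $\alpha \mapsto (\langle \alpha, v_1 \rangle, \dots, \langle \alpha, v_k \rangle)$ reduced in $\cok(M)$ to encode the restriction to $\Spc(Y)$, and the sentence following Equation (\ref{e: max s}) for the claim that a maximizer restricts to a maximizer. Your closing remark about compatibility of the identifications is exactly the content the paper disposes of in the two paragraphs preceding the lemma.
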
 \noindent The last sentence is a byproduct of the one following Equation (\ref{e: max s}). Note that $\alpha^2$ is minus the ordinary Euclidean length squared of the vector $\alpha$.  In particular, for every $\t \in \Spc(Y)$, the left-hand side of inequality (\ref{e: d diff 2}) is an even number $\leq 0$, and it equals $0$ if and only if there exists $\alpha \in \{ \pm 1 \}^n$ with $[\alpha] = \t$.

\section{A criterion for unknotting number one.}\label{s: criterion}

In this section we prove Theorem \ref{t: criterion}, which places a strong restriction on an alternating knot to have unknotting number one.  This theorem combines two earlier approaches, one due to Cochran-Lickorish using Donaldson's Theorem A, the other due to Ozsv\'ath-Szab\'o using their Heegaard Floer homology correction terms.  The two combine by way of Lemma \ref{l: d diff 2}.  Both techniques have at their core the Montesinos trick, which we state now in the precise form we need (cf. \cite[proof of Theorem 8.1]{OSunknotting}).

\begin{prop}[Signed Montesinos trick]\label{p: Mont}

Suppose that $K$ is a knot with unknotting number one, and reflect it if necessary so that it can be unknotted by changing a negative crossing to a positive one. Then $\Sigma(K) = S^3_{-\epsilon D / 2}(\kappa)$ for some knot $\kappa \subset S^3$, where $\epsilon = (-1)^{\sigma(K)/2}$, and  $D = \det(K)$.

\end{prop}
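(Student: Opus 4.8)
The plan is to run the classical Montesinos trick locally at the unknotting crossing and then track the two pieces of decoration that make the statement precise: the denominator $2$ of the surgery, and the sign of its numerator. After reflecting $K$ so that the unknotting crossing change carries a negative crossing to a positive one, I fix a ball $B$ meeting $K$ in the trivial two-strand tangle at that crossing. The double cover of $(B, K \cap B)$ branched over the two arcs is a solid torus $V$, while the double cover $\tilde T$ of $(S^3 \setminus B, K \setminus B)$ branched over the remaining tangle is a compact $3$-manifold with $\partial \tilde T = T^2$. By construction $\Sigma(K) = \tilde T \cup_h V$, where the gluing $h$ records the crossing as it sits in $K$. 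Regluing to effect the crossing change replaces $K$ by the unknot $U$ and produces $\tilde T \cup_{h'} V = \Sigma(U) = S^3$. Hence $\tilde T$ is the exterior of a knot $\kappa \subset S^3$ — the core of the filling solid torus — and $\Sigma(K)$ is a Dehn surgery on $\kappa$.

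Next I compute the surgery slope. In the branched double cover, the two tangles differing by the crossing change (the $+1$- and $-1$-tangles) correspond to slopes on $\partial V = T^2$ at geometric distance $|(+1)-(-1)| = 2$, since a crossing change is a full twist and is therefore two Farey steps, not one. The filling $h'$ giving $S^3$ identifies the meridian $\mu_\kappa$ with the $U$-slope, so the $K$-slope lies at distance $2$ from $\mu_\kappa$; writing it as $p\,\mu_\kappa + 2\,\lambda_\kappa$ relative to the Seifert framing $\lambda_\kappa$ of $\kappa$ shows $\Sigma(K) = S^3_{p/2}(\kappa)$ for some odd integer $p$. Since $|H_1(\Sigma(K))| = \det(K) = D$ while $|H_1(S^3_{p/2}(\kappa))| = |p|$, we get $p = \pm D$, and thus $\Sigma(K) = S^3_{\pm D/2}(\kappa)$.

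The main obstacle is pinning down the sign, i.e.\ showing the numerator is $-\epsilon D$ with $\epsilon = (-1)^{\sigma(K)/2}$; this is an orientation-comparison problem, since $\Sigma(K)$ carries a canonical orientation as a branched cover that must be matched against the one coming from the surgery. My plan is to play two $4$-dimensional fillings of $\Sigma(K)$ against each other through their signatures. On one side, pushing a Seifert surface $F$ for $K$ into $B^4$ and taking the double branched cover gives a $4$-manifold $X_F$ with $\partial X_F = \Sigma(K)$ whose intersection form is the symmetrized Seifert form $V + V^{T}$, so that $\operatorname{sig}(X_F) = \sigma(K)$. On the other side, the half-integer surgery $S^3_{p/2}(\kappa)$ bounds a $4$-manifold built from two $2$-handles whose intersection form has a definite sign governed by $\operatorname{sign}(p)$. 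Comparing these two bounds modulo $4$ — using that $\sigma(K)$ is even and that $\epsilon$ records only $\sigma(K) \bmod 4$, and that the initial reflection has already fixed the direction of the crossing change — forces the sign to be exactly $-\epsilon$, yielding $\Sigma(K) = S^3_{-\epsilon D/2}(\kappa)$. All of the subtlety is in the orientation conventions relating branched covers, Seifert-surface signatures, and surgery framings; the rest is the standard Montesinos trick.
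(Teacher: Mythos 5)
You should first be aware that the paper contains no proof of this proposition: it is stated ``in the precise form we need'' with a pointer to the proof of Theorem 8.1 of \cite{OSunknotting}, so the only thing to compare against is that cited argument. Your first two paragraphs reproduce its uncontroversial part correctly: the branched double cover of the crossing ball is a solid torus, the complementary piece $\tilde T$ is a knot exterior because one filling yields $\Sigma(U)=S^3$, the two fillings are at distance $2$, and the order of $H_1$ forces the coefficient to be $\pm D/2$. This much is the classical (unsigned) Montesinos trick and is fine.

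The gap is in your third paragraph, which is exactly where the \emph{signed} content of the proposition lives. ``Comparing these two bounds modulo $4$'' is not an argument, because there is no mechanism that relates the signatures of two different fillings of $\Sigma(K)$. If you glue $X_F$ to $-W$ along $\Sigma(K)$, Novikov additivity gives a closed $4$-manifold of signature $\sigma(K)-2\,\mathrm{sign}(p)$; since closed $4$-manifolds realize every integer as a signature, no congruence between $\sigma(K)$ and $\mathrm{sign}(p)$ follows. You also cannot fall back on Donaldson's Theorem A here: the form $V+V^T$ of $X_F$ is indefinite whenever $\sigma(K)=0$ and $g(F)>0$, and has signature $2$ when $\sigma(K)=2$, so neither piece is definite of the appropriate sign; and nothing in sight is spin, so Rokhlin-type arguments are unavailable as well. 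What is actually needed, and what the cited proof (in the spirit of Cochran--Lickorish \cite{CL}) supplies, is a \emph{closed-manifold} input obtained by realizing the crossing change four-dimensionally: the crossing change is the blow-down of a $\mp 1$-framed unknot $c$ encircling the two strands, so attaching the corresponding $2$-handle to $B^4$ and capping off produces a copy of $\pm \C P^2$ containing a closed surface $S$ built from the pushed-in Seifert surface of $K$, the trace annulus of the twist, and the disk bounded by $U$. The double cover branched along $S$ has signature computed by the $G$-signature theorem, namely $2\,\mathrm{sig}(\pm\C P^2)-[S]^2/2$, and Novikov additivity then evaluates the signature of the rank-two surgery piece sitting inside it; that evaluation, together with $\mathrm{lk}(c,K)\in\{0,\pm2\}$, is what pins $\mathrm{sign}(p)$ to $-(-1)^{\sigma(K)/2}$. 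Your closing remark that ``all of the subtlety is in the orientation conventions'' is apt, but the proposal never resolves those conventions --- the sign is asserted, not derived --- so as written the proof establishes only $\Sigma(K)=S^3_{\pm D/2}(\kappa)$, not the statement in question.
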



\subsection{Embeddings of intersection pairings.}\label{ss: embeddings}


For any knot $\kappa' \subset S^3$ and positive integer $D = 2n - 1$, the space $S^3_{-D/2}(\kappa')$ is the oriented boundary of the 4-manifold $W$ obtained by attaching a handle to the knot $\kappa'$ with framing $-n$ and a handle to a meridian $\mu$ of $\kappa'$ with framing $-2$.  Here $\kappa'$ and $\mu$ are regarded as knots in the boundary of a four-ball $D^4$.  Orient the knot $\kappa'$ somehow, and orient $\mu$ so the two have linking number 1.  Let $\{ x,y \}$ denote the basis for $H_2(W)$ implied by these orientations and handle attachments.  With respect to it, the intersection pairing is given by the negative definite form \begin{equation}\label{e: R_n} R_n = \left( \begin{matrix} -n & 1 \\ 1 & -2 \end{matrix} \right). \end{equation}  Now, for a knot $K$ as in the statement of Proposition \ref{p: Mont}, we must have $\sigma(K) \in \{ 0, 2 \}$.  If $\sigma(K) = 0$, then $ - \Sigma(\overline{K}) = \Sigma(K) = S^3_{-D/2}(\kappa)$, while if $\sigma(K) = 2$, then $- \Sigma(K) = S^3_{-D/2}(\overline{\kappa})$.  Here the overbar denotes mirror image.  Thus, Proposition \ref{p: Mont} leads to the following result.

\begin{prop}\label{p: W}

Assume that (i) $\sigma(K) = 0$ and $K$ can be unknotted by changing a positive crossing, or (ii) $\sigma(K)=2$ and $K$ can be unknotted by changing a negative crossing.  Then $-\Sigma(K)$ is the oriented boundary of a compact $4$-manifold $W_K$ with negative definite intersection pairing given by $R_n$.

\end{prop}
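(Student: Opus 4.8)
The plan is to assemble the statement directly from the Signed Montesinos trick (Proposition \ref{p: Mont}) together with the explicit handle description of $S^3_{-D/2}(\kappa')$ recorded in the opening paragraph of this subsection. The guiding observation is that each of the two hypotheses (i) and (ii) is tailored precisely so that $-\Sigma(K)$ is realized as $-D/2$ surgery --- with the crucial minus sign --- on some knot $\kappa' \subset S^3$, and the trace of such a surgery has intersection form $R_n$. So the whole argument is a matter of reducing both cases to the normalization of Proposition \ref{p: Mont}, and then reading off the form.

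First I would treat case (ii), since it is exactly the normalization of Proposition \ref{p: Mont}: here $\sigma(K) = 2$ and $K$ is unknotted by changing a negative crossing to a positive one, so $\epsilon = (-1)^{\sigma(K)/2} = -1$ and the trick yields $\Sigma(K) = S^3_{D/2}(\kappa)$. Reversing orientation and invoking the standard identity $-S^3_{p/q}(\kappa) = S^3_{-p/q}(\overline{\kappa})$ for mirror images then gives $-\Sigma(K) = S^3_{-D/2}(\overline{\kappa})$, so I set $\kappa' = \overline{\kappa}$. Case (i) I would reduce to this same normalization by passing to the mirror. Since mirroring a knot interchanges positive and negative crossings, $\overline{K}$ can be unknotted by changing a negative crossing to a positive one, and $\sigma(\overline{K}) = -\sigma(K) = 0$; hence Proposition \ref{p: Mont} applies to $\overline{K}$ with $\epsilon = +1$ and gives $\Sigma(\overline{K}) = S^3_{-D/2}(\kappa)$. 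Combining this with the branched-cover identity $\Sigma(\overline{K}) = -\Sigma(K)$ yields $-\Sigma(K) = S^3_{-D/2}(\kappa)$, so here $\kappa' = \kappa$.

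In both cases I have now exhibited $-\Sigma(K) = S^3_{-D/2}(\kappa')$ for a knot $\kappa'$, where $D = \det(K)$ is odd, so $D = 2n-1$ with $n = (D+1)/2 \geq 1$. The construction from the opening paragraph --- attaching a $(-n)$-framed $2$-handle along $\kappa'$ and a $(-2)$-framed $2$-handle along a meridian $\mu$ linking $\kappa'$ once --- produces a compact $4$-manifold $W_K$ bounding this surgery, and in the basis $\{x,y\}$ its intersection pairing is exactly $R_n$. That the boundary really is $S^3_{-D/2}(\kappa')$ I would confirm via the continued fraction $-D/2 = -n - 1/(-2)$, i.e.\ by slam-dunking the meridian. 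Finally $R_n$ is negative definite, since $\det R_n = 2n-1 = D > 0$ while the diagonal entry $-n < 0$, which establishes the claim.

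The step I expect to require the most care is the bookkeeping of orientations and crossing signs. The entire force of the hypotheses lies in guaranteeing that the surgery coefficient emerges as $-D/2$ rather than $+D/2$, for it is only the negative coefficient that makes $W_K$ negative definite; the mirror identities for branched double covers and for Dehn surgery must therefore be applied with scrupulously consistent sign conventions. This is precisely the point at which the restriction to $\sigma(K) = 0$ in case (i) and $\sigma(K) = 2$ in case (ii) is doing its work, and it is the only place where a slip would invalidate the conclusion.
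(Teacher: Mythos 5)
Your proof is correct and takes essentially the same route as the paper: the paper likewise deduces both cases from the signed Montesinos trick via the mirror identities $\Sigma(\overline{K}) = -\Sigma(K)$ and $-S^3_{D/2}(\kappa) = S^3_{-D/2}(\overline{\kappa})$ (case (ii) directly, case (i) after reflecting), and then reads off $R_n$ from the same two-handle description --- a $(-n)$-framed handle on $\kappa'$ and a $(-2)$-framed handle on its meridian --- of a $4$-manifold bounding $S^3_{-D/2}(\kappa')$. Your added verifications (the slam-dunk computation $-n - 1/(-2) = -D/2$ and the definiteness check on $R_n$) are left implicit in the paper but are consistent with it.
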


Now we specialize to the case of an alternating knot $K$.  In this case, $\Sigma(K)$ is the oriented boundary of a compact, negative-definite $4$-manifold $X_K$ with $H_2(X_K)$ torsion-free, $H_1(X_K) = 0$, and whose intersection pairing is given in a suitable basis $\{ v_1,\dots,v_r \}$ by the Goeritz matrix $G_K$ \cite[Theorem 3]{GLi}.  Consider the closed-up 4-manifold $X_K \cup_{\Sigma(K)} W_K$.  Identify the classes $v_1,\dots,v_r,x,y$ with their images under the inclusion $H_2(X_K) \oplus H_2(W_K) \into H_2(X_K \cup W_K)$.  Choose a (negative) orthonormal basis for $H_2(X_K \cup W_K)$ by Donaldson's Theorem A, and form the $(r+2) \times (r+2)$ integral matrix $A$ with row vectors $v_1,\dots,v_r,x,y$ expressed in this basis. In total, we obtain the following result.



\begin{prop}\label{p: embedding}

Suppose that $K$ is an alternating knot with unknotting number one, and without loss of generality that either (i) $\sigma(K)=0$ and $K$ can be unknotted by changing a positive crossing or (ii) $\sigma(K) = 2$.  Then there exists an $(r+2) \times (r+2)$ integer matrix $A$ for which $-A A^T = G_K \oplus R_n$.
\end{prop}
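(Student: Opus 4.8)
The plan is to assemble this statement by stringing together the results already established in the excerpt, so that the content of the proof is essentially a bookkeeping of which hypotheses feed into which conclusion. First I would invoke the reduction afforded by Proposition \ref{p: Mont} (the Signed Montesinos trick): since $K$ is an alternating knot with $u(K) = 1$, we have $|\sigma(K)| \leq 2u(K) = 2$, and since $\sigma(K)$ is even this forces $\sigma(K) \in \{-2, 0, 2\}$. After reflecting $K$ if necessary (replacing it by its mirror, which negates the signature and preserves $u = 1$), I may assume we are in one of the two normalized cases (i) $\sigma(K) = 0$ with $K$ unknotted by changing a positive crossing, or (ii) $\sigma(K) = 2$. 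This is exactly the hypothesis of Proposition \ref{p: W}, which I would then apply to produce a compact $4$-manifold $W_K$ with $\del W_K = -\Sigma(K)$ whose intersection pairing is the negative-definite form $R_n$ of Equation (\ref{e: R_n}), where $D = \det(K) = 2n-1$.

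Next I would bring in the alternating hypothesis proper. By the cited result of Gordon--Litherland \cite{GLi}, the double cover $\Sigma(K)$ of an alternating knot bounds a compact, negative-definite $4$-manifold $X_K$ with $H_1(X_K) = 0$ and $H_2(X_K)$ torsion-free, whose intersection pairing is presented by the Goeritz matrix $G_K$ in a suitable basis $\{v_1, \dots, v_r\}$. The crucial point is that $\del X_K = \Sigma(K)$ while $\del W_K = -\Sigma(K)$, so the two boundaries match with opposite orientations and I may glue to form the closed manifold $X_K \cup_{\Sigma(K)} W_K$. This closed manifold is smooth and negative-definite, since both pieces are negative-definite and the pairings add under the Mayer--Vietoris decomposition (as used in the proof of Lemma \ref{l: d diff}).

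The final step is to invoke Donaldson's Theorem A \cite{D}: the intersection lattice of the closed, smooth, negative-definite manifold $X_K \cup W_K$ is standard, i.e. isomorphic to $\Z^{r+2}$ with the negative of the Euclidean inner product. Fixing a negative orthonormal basis, I express the images of $v_1, \dots, v_r, x, y$ under the inclusion $H_2(X_K) \oplus H_2(W_K) \hookrightarrow H_2(X_K \cup W_K)$ as the rows of an integer matrix $A$ of size $(r+2) \times (r+2)$. Because these rows are exactly the generators carrying the pairings $G_K$ (on the $X_K$ side) and $R_n$ (on the $W_K$ side), and because the ambient form is the \emph{negative} of the standard one, the Gram matrix of the rows computed in the orthonormal basis is $-AA^T$, which must therefore equal $G_K \oplus R_n$. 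I expect the only genuinely delicate point to be the orientation and sign bookkeeping --- confirming that the two boundary identifications are compatible so that gluing yields a closed negative-definite manifold, and that the negative orthonormal normalization produces the sign $-AA^T$ rather than $+AA^T$. Everything else is a direct concatenation of Propositions \ref{p: Mont}, \ref{p: W}, the Gordon--Litherland presentation, and Donaldson's theorem.
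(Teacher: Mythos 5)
Your proposal is correct and follows essentially the same route as the paper: Subsection \ref{ss: embeddings} proves Proposition \ref{p: embedding} exactly by combining Proposition \ref{p: Mont}, Proposition \ref{p: W}, the Gordon--Litherland presentation of the pairing on $X_K$ by $G_K$, gluing along $\Sigma(K)$, and Donaldson's Theorem A to express $v_1,\dots,v_r,x,y$ in a (negative) orthonormal basis as the rows of $A$. Your sign and orientation bookkeeping (in particular that $\del X_K = \Sigma(K)$ while $\del W_K = -\Sigma(K)$, so the pieces glue to a closed negative-definite manifold) matches the paper's.
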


Already this result places a strong restriction on the Goeritz matrix of an alternating knot with unknotting number one.  A variant on Proposition \ref{p: embedding} appears in \cite{CL}, where it is applied to give some bounds on signed unknotting numbers.


\subsection{The correction terms test.}\label{ss: corr terms}

When $Y$ is an L-space obtained by half-integer surgery on a knot in $S^3$, Ozsv\'ath and Szab\'o prove a symmetry amongst the correction terms of $Y$ when compared with those of a corresponding lens space \cite[Theorem 4.1]{OSunknotting}.  We recall their result here.  Let $\kappa$ be a knot and $D = 2n-1$ with $n > 1$.  We have a natural identification \[ \Spc(S^3_{-D/2}(\kappa)) \to H^2(S^3_{-D/2}(\kappa)), \quad \t \mapsto c_1(\t)/2,\] since 2 is a unit in the second cohomology group.  This group is in turn isomorphic with $\cok (R_n)$, and we identify \[ \cok(R_n) \cong \Z / D \Z, \quad [(a,b)] \mapsto a + nb.\]  We note that the composite identification $H^2(S^3_{-D/2}(\kappa)) \cong \Z / D \Z$ has as its inverse the map \[\Z / D \Z \overset{\sim}{\to} H^2(S^3_{-D/2}(\kappa)), \quad i \mapsto i \cdot [x^*],\] keeping the notation of the previous Subsection.  Thus, in the case at hand, we can refine the correspondence (\ref{e: spc Y}) to \begin{equation} \Spc(S^3_{-D/2}(\kappa)) \leftrightarrow \Z / D \Z \leftrightarrow \Spc(S^3_{-D/2}(U)). \end{equation}  Under this correspondence, \cite[Theorem 4.1]{OSunknotting} reads as follows.




\begin{thm}\label{t: symmetry}

Let $\kappa$ be a knot, $D = 2n-1$ with $n > 1$, and suppose that $S^3_{-D/2}(\kappa)$ is an L-space with the property that \begin{equation}\label{e: moot} d(S^3_{-D/2}(\kappa),0) = d(S^3_{-D/2}(U),0). \end{equation}  Write $n = 2k$ or $2k+1$ depending on its parity.  Then we have the identity
\begin{equation}\label{e: symmetry}
d(S^3_{-D/2}(\kappa),i) - d(S^3_{-D/2}(U),i) = d(S^3_{-D/2}(\kappa),2k-i) - d(S^3_{-D/2}(U),2k-i)
\end{equation} for $i = 1,\dots,k$, and also for $i=0$ in case $n = 2k+1$.

\end{thm}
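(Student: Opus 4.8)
The plan is to obtain Theorem \ref{t: symmetry} as a direct transcription of \cite[Theorem 4.1]{OSunknotting} along the identification $\Spc(S^3_{-D/2}(\kappa)) \leftrightarrow \Z/D\Z$ fixed above, so that the real work is bookkeeping rather than new Floer-theoretic input. First I would recall the Ozsv\'ath--Szab\'o result in its native labeling of $\spc$ structures and record their normalization hypothesis, which under our conventions is exactly the condition (\ref{e: moot}). The substance of their theorem is that the correction terms of a half-integer surgery, measured against those of the corresponding lens space $S^3_{-D/2}(U)$, enjoy a symmetry that is \emph{not} the conjugation symmetry but a second involution forced by the half-integer surgery structure; the task is to check that this second involution is precisely $i \mapsto 2k-i$ in our indexing.

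Concretely, I would track two involutions on $\Z/D\Z$. The conjugation symmetry $d(Y,\t)=d(Y,\overline{\t})$, valid for any rational homology sphere and both for $\kappa$ and for $U$, becomes $i \mapsto -i$ under $\t \mapsto c_1(\t)/2$ together with $\cok(R_n)\cong \Z/D\Z$, and by itself only yields that the difference $d(S^3_{-D/2}(\kappa),i)-d(S^3_{-D/2}(U),i)$ is invariant under $i\mapsto D-i$. The asserted symmetry $i\mapsto 2k-i$ is genuinely finer and comes from the surgery formula: expressing these correction terms through the knot-theoretic local invariants of $\overline{\kappa}$ that govern half-integer surgery (equivalently, via the mapping cone for $-D/2$ surgery), each difference is controlled by a reflection that, once $D=2n-1$ is split as $n=2k$ or $2k+1$, becomes $i\mapsto 2k-i$. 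The hypothesis (\ref{e: moot}) anchors the symmetry by pinning the value at $i=0$, which is what lets the reflection be stated as an equality of differences rather than merely an inequality.

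I expect the main obstacle to be the index and parity bookkeeping: keeping the conjugation involution $i\mapsto -i$ and the half-integer involution $i\mapsto 2k-i$ separate, and verifying that $i=1,\dots,k$ (together with $i=0$ when $n=2k+1$) is an honest set of orbit representatives for $i\mapsto 2k-i$, with the fixed points handled according to the parity of $n$. One must also be careful with the orientation and mirror conventions feeding into the computation, namely $d(-Y,\t)=-d(Y,\t)$ and the passage from $-D/2$ surgery on $\kappa$ to $+D/2$ surgery on $\overline{\kappa}$, since a sign slip there would reflect the wrong involution. Modulo these off-by-one and parity checks, the statement follows formally from \cite[Theorem 4.1]{OSunknotting}.
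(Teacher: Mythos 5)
Your proposal matches the paper exactly: the paper gives no independent proof of this theorem, but simply states it as the translation of \cite[Theorem 4.1]{OSunknotting} under the identification $\Spc(S^3_{-D/2}(\kappa)) \leftrightarrow \Z/D\Z \leftrightarrow \Spc(S^3_{-D/2}(U))$ set up in that subsection, with hypothesis (\ref{e: moot}) playing the role of their normalization condition. Your additional care about separating the conjugation involution from the half-integer symmetry and checking the index bookkeeping is exactly the (routine) content implicit in the paper's phrase ``Under this correspondence, [their theorem] reads as follows.''
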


On the other hand, Ozsv\'ath and Szab\'o prove that $\Sigma(K)$ is an L-space when $K$ is an alternating knot, and moreover that the four-manifold $X_K$ of Subsection \ref{ss: embeddings} is sharp \cite[Proposition 3.3 and Theorem 3.4]{OSdoublecover}.  Using the equality that results in (\ref{e: max s}), this entails a formula for the correction terms of $\Sigma(K)$ in terms of the Goeritz form $G_K$ \cite[Proposition 3.2]{OSunknotting}.  This formula can be used in conjunction with Theorem \ref{t: symmetry} to prove in some cases that for a specific alternating knot $K$, the space $\Sigma(K)$ {\em cannot} be obtained by $-D/2$ surgery on any knot $\kappa$; and consequently, that the knot $K$ does not have unknotting number one.  This is the main obstruction in \cite{OSunknotting}, which was fruitfully applied there to classify alternating knots with up to ten crossings with unknotting number one, as well as to obtain results for some non-alternating examples.

In this regard, we note that $d(\Sigma(K),0) = d(S^3_{-D/2}(U),0)$ holds whenever $K$ is an alternating knot with $\sigma(K) \in \{ 0,2 \}$ and $D = \det(K)$.  That is, the hypothesis (\ref{e: moot}) is always met in the case $\Sigma(K) = S^3_{-D/2}(\kappa)$ with $K$ an alternating knot with $u(K) = 1$.  For according to \cite[Theorem 1.2]{MOw}, $d(\Sigma(K),0) = - \sigma(K)/4.$  In addition, $d(S^3_{-D/2}(U),0)$ equals $0$ if $D > 0, D \equiv 1 \; (\mod 4)$, and it equals $-1/2$ if $D > 0, D \equiv 3 \; (\mod 4)$.  This follows by calculating the square of a maximizer in $\Char(W',0)$ displayed in Table \ref{table: maximizers}.  Furthermore, by \cite[Theorem 5.6]{Msig}, $D = \det(K) \equiv \sigma(K) + 1 \; (\mod 4)$.  Thus, $d(\Sigma(K),0) = d(S^3_{-D/2}(\kappa),0) = d(S^3_{-D/2}(U),0)$, as claimed.

Note that the trace of surgery $W'$ corresponding to the space $S^3_{-D/2}(U)$ is sharp.  This can be seen, for instance, by exhibiting $S^3_{-D/2}(U)$ as the branched-double cover of the twist knot $T_n$, depicted in Figure (\ref{f: twistknot}).  By marking the outer region, we obtain the Goeritz matrix $R_n$ for this knot, and $W' \cong X_{T_n}$.  Moreover, it is a straightforward matter to identify the maximizers in $\Char(W',i) := \{ \alpha \in \Char(W') \; | \; [\alpha] = i \},$ where we use the correspondence $H^2(S^3_{-D/2}(U)) \overset{\sim}{\to} \Z / D \Z$.  They are tabulated in Table \ref{table: maximizers}.



\begin{table}[h]\vspace*{-1ex}
\caption{Maximizers $\alpha \in Char(W',i)$.}\label{table: maximizers}
\begin{tabular}{ccc}
\hline
&$n=2k$&$n=2k+1$\\
\hline
$i=0, \pm 1, \dots, \pm k$&$(2i,0)$&$(2i+1,-2),(2i-1,2)$\\
\hline
$i=\pm(k+1),\dots,\pm n$&$(2i-2n,2),(2i-2n+2,-2)$&$(2i+1-2n,0)$\\
\end{tabular}
\end{table}


\begin{figure}
\begin{center}
\includegraphics[width=2in]{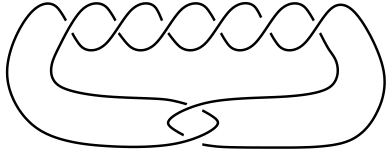}
\caption{The twist knot $T_n$, shown here for $n=6$.} \label{f: twistknot}
\end{center}
\end{figure}


\subsection{The refined test.}\label{ss: refined}

We now state Theorem \ref{t: criterion}, our primary restriction for an alternating knot $K$ to have unknotting number one.  The main ingredients in its proof are Lemma \ref{l: d diff 2} and Theorem \ref{t: symmetry}.

\begin{thm}\label{t: criterion}

Suppose that $K$ is an alternating knot with unknotting number one, and without loss of generality that either (i) $\sigma(K)=0$ and $K$ can be unknotted by changing a positive crossing or (ii) $\sigma(K) = 2$.  Let $G_K$ denote its Goeritz matrix, $\det(K) = 2n-1$, and $R_n$ the matrix in (\ref{e: R_n}). Then there exists an $(r+2) \times (r+2)$ integer matrix $A$ for which \[ -A A^T = G_K \oplus R_n \] and whose last two rows are \begin{equation}\label{e: x and y} x = (0,1,x_3,\dots,x_{r+2}) \text{ and } y = (1,-1,0,\dots,0).\end{equation}  The values $x_3,\dots,x_{r+2}$ are non-negative and obey the {\em change-making condition} \begin{equation}\label{e: change} x_3 \leq 1 \; , \; x_i \leq x_3+\cdots+x_{i-1} + 1 \text{ for } 3 < i \leq r+2,\end{equation} and the upper-right $r \times r$ submatrix $C$ of $A$ has determinant $\pm 1$.

\end{thm}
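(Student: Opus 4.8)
The plan is to combine the three obstructions already assembled: Proposition \ref{p: W} (existence of $W_K$ with pairing $R_n$), the sharpness of $X_K$ (via \cite{OSdoublecover}), and the correction-term symmetry of Theorem \ref{t: symmetry}, all funneled through Lemma \ref{l: d diff 2}. Proposition \ref{p: embedding} already yields the matrix $A$ with $-AA^T = G_K \oplus R_n$; the content of Theorem \ref{t: criterion} beyond that is the normalization of the last two rows $x,y$, the change-making inequalities \eqref{e: change}, and the unimodularity of the submatrix $C$. So I would treat Proposition \ref{p: embedding} as the starting point and spend the proof pinning down the structure of the rows $x$ and $y$ that encode the image of $H_2(W_K)$ inside the standard lattice $\Z^{r+2}$.

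First I would record that, by construction, $x$ and $y$ are the images of the basis $\{x,y\}$ for $H_2(W_K)$, so they satisfy $x \cdot x = -n$, $y\cdot y = -2$, $x\cdot y = 1$, and each is orthogonal to all the $v_i$ (since $G_K \oplus R_n$ is a block sum). From $y\cdot y = -2$ I get that $y$ has exactly two nonzero coordinates, each $\pm 1$, in the orthonormal basis; after reindexing the orthonormal basis and flipping signs I may take $y = (1,-1,0,\dots,0)$. The condition $x\cdot y = 1$ then forces the first two coordinates of $x$ to differ by $1$, and absorbing the freedom to translate $x$ by multiples of $y$ (which preserves $-AA^T$ after a compensating change, or more precisely choosing the representative maximizer appropriately) together with sign changes, I normalize to $x = (0,1,x_3,\dots,x_{r+2})$ with all $x_i \ge 0$. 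This is the part of the argument that is essentially a lattice-bookkeeping exercise and should go through cleanly.

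The heart of the matter is extracting the change-making condition \eqref{e: change}, and this is where Theorem \ref{t: symmetry} and Lemma \ref{l: d diff 2} enter. The idea is that the correction-term symmetry, transported through the identification $\cok(R_n)\cong \Z/D\Z$ and the maximizer table (Table \ref{table: maximizers}), forces the maximizers $\alpha \equiv \mathbf{1} \pmod 2$ over each $\spc$ class $\t$ to have the smallest possible Euclidean length compatible with the sharpness equality in \eqref{e: d diff 2}. Concretely, the vanishing of the left-hand side of \eqref{e: d diff 2} for the classes where the lens-space maximizer is $(\pm 1)$-valued means that for those $\t$ there must exist a characteristic vector $\alpha\in\{\pm1\}^{r+2}$ with $[\alpha]=\t$; and since $[\alpha]$ is computed as $\langle\alpha,x\rangle + n\langle\alpha,y\rangle \in \Z/D\Z$, this representability statement, ranging over all $\t$ in the relevant interval, is exactly the statement that the partial sums $x_3+\cdots+x_i$ realize every residue up to the next coordinate — i.e.\ the change-making inequalities. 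I expect the main obstacle to be precisely this translation: matching the representability of each $\t$ by a $\{\pm1\}$-vector against the arithmetic of partial sums, keeping careful track of the identification $\cok(R_n)\cong\Z/D\Z$ via $[(a,b)]\mapsto a+nb$ and the induced pairing, so that ``which residues are hit'' becomes ``$x_i \le x_3+\cdots+x_{i-1}+1$''.

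Finally, the unimodularity $\det C = \pm 1$ of the upper-right $r\times r$ submatrix I would obtain from a determinant computation: the full block structure $-AA^T = G_K\oplus R_n$ gives $\det(A)^2 = \det(G_K)\det(R_n) = D\cdot(2n-1)=D^2$, so $|\det A| = D$. On the other hand, expanding $\det A$ by cofactors along the last two rows $x$ and $y$, whose nonzero entries are confined to the first two columns together with the $x_i$, relates $\det A$ to $\det C$ times the elementary $2\times 2$ determinant coming from the $(x,y)$ rows; since $D = \det(K) = |\det G_K|$ already accounts for the $r\times r$ Goeritz block, the leftover factor forces $|\det C| = 1$. This last step is a short linear-algebra verification once the shapes of $x$ and $y$ are fixed, so I anticipate no difficulty there beyond careful index bookkeeping.
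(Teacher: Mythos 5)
Your outline assembles the right ingredients and follows the same overall route as the paper (Proposition \ref{p: embedding} as the starting point, then Lemma \ref{l: d diff 2} plus Theorem \ref{t: symmetry} to refine the embedding), but it misplaces where the Heegaard Floer input is actually needed, and this leaves a genuine gap. You claim the normalization $x = (0,1,x_3,\dots,x_{r+2})$ is ``essentially a lattice-bookkeeping exercise.'' It is not. After fixing $y = (1,-1,0,\dots,0)$, the relation $\langle x,y\rangle = 1$ only gives $x_1 - x_2 = -1$, so $(x_1,x_2) = (t,t+1)$ for some unknown integer $t$. The only Gram-preserving moves available are signed permutations of the coordinates $3,\dots,r+2$ (which do not touch $x_1,x_2$), and the involution $x \mapsto -(x+y)$, which sends $(t,t+1)$ to $(-t-1,-t)$; note that $x \mapsto x + ky$ preserves $x^2=-n$ only for $k\in\{0,1\}$, and $k=1$ flips the sign of $\langle x,y\rangle$, which is why only this involution survives. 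Hence the orbit of $(x_1,x_2)$ under bookkeeping is $\{(t,t+1),\,(-t-1,-t)\}$, which contains $(0,1)$ only when $t \in \{0,-1\}$; nothing in lattice theory excludes, say, $(x_1,x_2)=(2,3)$. In the paper this is precisely the first place the correction terms enter: Lemma \ref{l: d diff 2}, sharpness of $X_K$, and Table \ref{table: maximizers} show that the set $S$ of classes where the correction terms agree has largest element $|x_1+x_2| + \sum_{i\geq 3}|x_i|$, while the reflected set $S'$ (built from the other family of maximizers in the table) has largest element $1 + \sum_{i\geq 3}|x_i|$; Theorem \ref{t: symmetry} forces $S = S'$, hence $|x_1+x_2|=1$, and only then does the involution finish the normalization. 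Since your proposal spends the correction terms solely on the change-making condition, this step is unproven in your outline.

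Two secondary points. First, your mechanism for (\ref{e: change}) is garbled: the criterion is not ``classes where the lens-space maximizer is $(\pm1)$-valued'' (those maximizers live in the rank-two lattice $H^2(W')$ and are never $\{\pm1\}$-vectors); rather, vanishing of the left side of (\ref{e: d diff 2}) is equivalent to existence of $\alpha \in \{\pm1\}^{r+2}$ restricting to $\t$, and determining \emph{which} classes have this property is exactly the content one must extract from $S = S'$, together with the paper's downward-closure step (every positive $2j \in S$ forces $2j-2 \in S$), before ``subset sums of the $x_i$ fill out $[0,\sum x_i]$'' follows. Second, the determinant claim does not follow from a two-row Laplace expansion as you describe: because the first two columns of the $v$-block coincide (a consequence of $\langle v_i,y\rangle = 0$), the cross terms involving $x_j$, $j \geq 3$, do not cancel but double, giving $\det A = -\det C + 2\sum_{j\geq 3}(\pm x_j)\det A_{\hat{1}\hat{j}}$ rather than a clean product. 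The paper instead uses the orthogonality relation $C\overline{x} = -z$ (with $z$ the common first column of the $v$-block) to perform column operations producing a block-triangular matrix whose lower-left $2\times 2$ block has determinant $-D$, whence $\det A = \pm D\det C$. That relation is the missing input; this part of your argument is repairable, but it is not mere index bookkeeping.
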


\begin{proof}

The space $\Sigma(K)$ is the oriented boundary of the sharp 4-manifold $X_K$, which arises by attaching 2-handles along a framed link $\L \subset S^3$ with linking matrix $G_K$. The signed Montesinos trick implies that $-\Sigma(K) = S^3_{-D/2}(\kappa)$, which is the boundary of the manifold $W_K$ obtained by attaching 2-handles along the framed link $\kappa \cup \mu$ with linking matrix $R_n$ (see Equation (\ref{e: R_n})).  As noted above, the manifold $W'$ corresponding to the L-space $S^3_{-D/2}(U)$ is sharp.  Therefore, the hypotheses preceding Lemma \ref{l: d diff} are fulfilled.  Furthermore, the technical hypothesis \ref{e: moot} of Theorem \ref{t: symmetry} is met as well, as remarked in Subsection \ref{ss: corr terms}.

We use the coordinate-dependent Lemma \ref{l: d diff 2} in tandem with Theorem \ref{t: symmetry} to obtain sharper information on the embedding matrix $A$ guaranteed by Proposition \ref{p: embedding}.  As before, we identify $H_2(X_K \cup W_K)$ with the lattice $\Z^{r+2}$, equipped with the standard negative definite inner product and a (negative) orthonormal basis, and label the rows of $A$ by $v_1,\dots,v_r,x,y$.  Since $y^2 = -2$, we can perform an automorphism of $\Z^{r+2}$ to arrange that $y = (1,-1,0,\dots,0)$.  Writing $x = (x_1,\dots,x_{r+2})$, the equation $\langle x,y \rangle = 1$ implies that \begin{equation}\label{e: x prelim} x_1 - x_2 = -1. \end{equation}



Suppose that $n = 2k$ is even, and consider the set \begin{equation} S := \{ 0 \leq 2j \leq 2k \; | \; d(\Sigma(K),j) = d(S^3_{-D/2}(U),j) \}.\end{equation}  Select any $j \in S$.  By Lemma \ref{l: d diff 2}, there exists a maximizer $\alpha = (\alpha_1,\dots,\alpha_{r+2}) \in \{\pm 1 \}^{r+2}$ with $[( \langle \alpha,x \rangle, \langle \alpha,y \rangle )] = j \in \Z / D \Z \cong \cok(R_n)$, and the class $( \langle \alpha,x \rangle, \langle \alpha,y \rangle) \in H^2(W_K)$ is itself a maximizer.  Referring to Table \ref{table: maximizers}, we identify this class as $(2j,0)$: that is, $ \langle \alpha,x \rangle = -2j$ and $\langle \alpha,y \rangle = 0$.  From $\langle \alpha,y \rangle = 0$ we obtain $\alpha_1 = \alpha_2 = \pm 1$.  Therefore, the expression for $\langle \alpha,x \rangle$ becomes \[ 2j = - \alpha_1 (x_1 + x_2) - \sum_{i=3}^{r+2} \alpha_i x_i. \] Conversely, any value of this form with all $\alpha_i \in \{ \pm 1 \}$ belongs to $S$.  Thus, \[ S = \{ \alpha_1 (x_1+x_2) + \sum_{i=3}^{r+2} \alpha_i x_i \geq 0 \; | \; \alpha_i \in \{ \pm 1 \} \; \forall i \}. \]  In particular, the largest element of $S$ is $S_{max} : = |x_1+x_2| + \sum_{i=3}^{r+2} |x_i|$, which is gotten by taking $\alpha_1$ to have the same sign as $(x_1 + x_2)$, and $\alpha_i$ to have the same sign as $x_i$ for $i \geq 3$.

Similarly, define \begin{equation}\label{e: S'} S' := \{ 0 \leq 2j' \leq 2k \; | \; d(\Sigma(K),2k-j') = d(S^3_{-D/2}(U),2k-j') \}.\end{equation}  The same argument, making use of Equation (\ref{e: x prelim}), identifies \[ S' = \{ 1 + \sum_{i=3}^{r+2} \alpha_i x_i \geq 0 \; | \; \alpha_i \in \{ \pm 1 \} \; \forall i \}. \]  In particular, its largest element is $S'_{max} := 1 + \sum_{i=3}^{r+2} |x_i|$.  Now, Theorem \ref{t: symmetry} implies that in fact $S = S'$.  In particular, $S_{max} = S'_{max}$.  Comparing these values, and again making use of Equation (\ref{e: x prelim}), we obtain $(x_1,x_2) = (0,1)$ or $(-1,0)$.  Replacing $x$ by $-(x+y)$ if necessary, we obtain an embedding matrix $A$ for which $(x_1,x_2) = (0,1)$.  This establishes (\ref{e: x and y}).

Let us probe the equality $S = S'$ in light of the fact that $x_1+x_2=1$.  Choose a positive value $2j \in S'$.  Then there exist $\alpha_3,\dots,\alpha_{r+2} \in \{ \pm 1 \}$ so that $1 + \sum_{i=1}^{r+2} \alpha_i x_i = 2j$.  Now choose $\alpha_1 = -1$, so that $\alpha_1 + \sum_{i=1}^{r+2} \alpha_i x_i = 2j-2 \in S$.  Thus, for every positive value $2j \in S = S'$, the value $2j-2$ belongs to $S$ as well. Hence $S$ consists of all even numbers between $0$ and $S_{max}$.  (An alternative argument proceeds by way of \cite[Theorem 8.4]{OSunknotting}.)  Thus, \[ S \cup (-S) = \{ \alpha_1 + \sum_{i=3}^{r+2} \alpha_i x_i \; | \; \alpha_i \in \{ \pm 1 \} \; \forall i \}\] consists of all even numbers between $-S_{max}$ and $S_{max}$.  Replace the $i^{th}$ basis vector by its negative if necessary so that $x_i \geq 0$, and write $\alpha_i = -1 + 2 \beta_i$, with $\beta_i \in \{ 0 , 1 \}$.  It follows that the set of values \[ \{ \sum_{i=3}^{r+2} \beta_i x_i \; | \; \beta_i \in \{ 0, 1 \} \} \] consists of all the integers between $0$ and $\sum_{i=3}^{r+2} x_i$.  That is, using coins with whole values $x_3,\dots,x_{r+2}$, it is possible to make change in any whole amount from $0$ up to the maximum possible $\sum_{i=3}^{r+2} x_i$.  Reorder the basis elements so that $x_3 \leq x_4 \leq \cdots \leq x_{r+2}$.  Then it is easy to see that this change-making condition is satisfied if and only if (\ref{e: change}) holds.

The condition on the determinant of $C$ is an algebraic consequence of the fact that $x_1 = 0, x_2 = 1$.  Specifically, let $z$ denote the vector consisting of the first $r$ values from the first column of $A$ and $\overline{x}$ the vector $(x_3,\dots,x_r)$.  Since $\langle v_i,y \rangle = 0$ for all $i$, $z$ agrees with the vector consisting of the first $r$ values from the second column of $A$.  The facts that $x_1 = 0$, $x_2 = 1$, and $\langle x,v_i \rangle = 0$ for $i = 1, \dots, r$ together imply that $C \overline{x} = -z$.  Now subtract the second column of $A$ from its first, and add $x_i$ copies of the $i^{th}$ column of the second one, producing a matrix $A'$ with $\det(A') = \det(A)$.  The upper-left $r \times 2$ submatrix of $A'$ consists entirely of $0$'s, the upper-right $r \times r$ submatrix is $C$, and the lower-left $2 \times 2$ submatrix is \[ \left( \begin{matrix} -1 & - x^2 \\ 2 & -1 \end{matrix} \right), \] which has determinant $1-2n = -D$.  It follows that $\det(A) = \det(A') = \pm D \cdot \det(C)$, and since $-D^2 = \det(G \oplus R_n) = - \det(A)^2$, we obtain $\det(C) = \pm 1$, as stated.

The preceding argument goes through with minimal change in the case $n = 2k+1$, completing the proof of the theorem.

\end{proof}


\subsection{Examples.}\label{ss: example}

Theorem \ref{t: criterion} is our main criterion for an alternating knot with unknotting number one.  Using it, we will prove Theorem \ref{thm: main} over the course of the next two sections.  As a warm-up, we apply it to the pair of examples from the end of Subsection \ref{ss: Goeritz}, and discuss the case of 11-crossing knots.

First, consider the knot $8_7$.  This is an alternating 3-braid knot with $\sigma(8_7) = 2$, $\det(8_7) = 23 = 2 \cdot 12 - 1$, and $u(8_7) = 1$.  The matrix $A$ guaranteed by Theorem \ref{t: criterion} is essentially unique in this case:

\[
A =
\left(
\begin{matrix}
0 &  0 &  1 &  2 & -1 \\
1 &  1 & -1 &  0 &  0 \\
0 &  0 &  1 & -1 &  0 \\
\hline
0 &  1 &  1 &  1 &  3 \\
1 & -1 &  0 &  0 &  0
\end{matrix}
\right).
\]  Letting $C$ denote the upper-right $3 \times 3$ submatrix, observe that the matrix $- C C^T$ is a Goeritz matrix for the knot diagram obtained on changing the crossing indicated in Figure \ref{f: 3-braid}.  Indeed, this is typical of the case of an alternating 3-braid knot with non-zero signature, and for which there is a matrix $A$ fulfilling the conclusion of Theorem \ref{t: criterion}: the matrix $-C C^T$ is the Goeritz matrix for a knot diagram obtained on changing some crossing in the given diagram.  Moreover, the resulting knot is almost-alternating, and has determinant $|\det(-C C^T)| = 1$: thus, it is the unknot, according to Proposition \ref{p: almost-alt}.  In this way, Theorem \ref{t: criterion} enables us to identify an unknotting crossing in the given knot diagram.  This is the spirit of the argument given in Section \ref{s: sig 2}.

Next, consider the knot $10_{79}$.  This is an alternating 3-braid knot with $\sigma(10_{79}) = 0$ and $\det(10_{79}) = 61 = 2 \cdot 31 -1$.  Putting aside the change-making condition (\ref{e: change}), there is an essentially unique matrix $A$ which fulfills the other conclusions of Theorem \ref{t: criterion}:

\[
A =
\left(
\begin{matrix}
 1 &  1 &  0 &  1 &  1 &  0 & -1 \\
 0 &  0 &  0 &  0 &  0 & -1 &  1 \\
 1 &  1 &  0 &  0 &  1 &  0 & -1 \\
 0 &  0 &  0 &  1 & -1 &  0 &  0 \\
 0 &  0 &  1 & -1 &  0 &  0 &  0 \\
\hline
 0 &  1 &  2 &  2 &  2 &  3 &  3 \\
 1 & -1 &  0 &  0 &  0 &  0 &  0
\end{matrix}
\right).
\]  However, since the penultimate row fails to satisfy (\ref{e: change}), it follows that $u(10_{79}) \ne 1$.  Notice that there are two rows amongst the first $r$ in $A$ with non-zero entries in the first two columns.  This is generally the case for the matrix $A$ corresponding to an alternating 3-braid knot with zero signature, and which fulfills all the conclusions of Theorem \ref{t: criterion} except possibly the change-making condition.  We can argue further that when the change-making condition is met, those two rows are adjacent: $\langle v_i, v_j \rangle = 1$.  Granted this, we can proceed as sketched above in the case of non-zero signature to identify an unknotting crossing in the given diagram.  This is the spirit of the argument given in Section \ref{s: sig 0}.

Using Theorem \ref{t: criterion} as demonstrated, we can complete the determination of the alternating knots with unknotting number one and crossing number at most $11$.  As mentioned in the Introduction, the determination up to unknotting number $10$ follows from classical techniques, together with the work of Ozsv\'ath-Szab\'o \cite{OSunknotting} and Gordon-Luecke \cite{GLu}.  Furthermore, Gordon-Luecke succeeded in determining the 11-crossing alternating knots with unknotting number one with 100 exceptions.  The exceptions are the knots $11aN$, where $N \in \{ \underline{1}$, $\underline{4}$, $\underline{5}$, $\underline{6}$, $\underline{{\bf 7}}$, $\underline{16}$, $\underline{21}$, $\underline{23}$, $\underline{32}$, $\underline{{\bf 33}}$, $\underline{36}$, $\underline{37}$, $\underline{39}$, $\underline{42}$, $\underline{{\bf 45}}$, $\underline{46}$, $\underline{50}$, $\underline{51}$, $\underline{55}$, $\underline{58}$, $\underline{61}$, $\underline{87}$, $\underline{92}$, $\underline{97}$, {\bf 99}, $\underline{103}$, $\underline{{\bf 107}}$, $\underline{108}$, $\underline{109}$, 112, $\underline{118}$, 125, $\underline{128}$, 131, $\underline{133}$, $\underline{134}$, 135, {\bf 137}, {\bf 148}, $\underline{153}$, 155, 158, 162, {\bf 163}, 164, $\underline{165}$, 169, {\bf 170}, 171, 172, $\underline{181}$, 196, 197, $\underline{199}$, $\underline{201}$, $\underline{{\bf 202}}$, $\underline{214}$, 217, 218, $\underline{{\bf 219}}$, $\underline{221}$, {\bf 228}, 239, 248, 249, $\underline{258}$, 268, 269, 270, 271, 273, {\bf 274}, 277, 278, 279, {\bf 281}, 284, 285, 286, {\bf 288}, {\bf 296}, {\bf 297}, 301, 303, 305, 312, 313, {\bf 314}, 315, 317, 322, 324, 325, 327, 331, 332, 349, 350, 352, $\underline{362} \}$ \cite{knotinfo}.  A laborious, week-long hand calculation using Theorem \ref{t: criterion} rules out these remaining 100 possibilities.  Only four of these values, $N \in \{$55, 87, 153, 172$\}$, require the invocation of the change-making condition (\ref{e: change}).  Admittedly, this calculation is difficult to check.  Fortunately, Slaven Jabuka and Eric Staron have independently verified several of these cases by different methods; those values checked by Jabuka appear above in bold, and those by Staron appear underlined \cite{Jabuka,Staron}.  Furthermore, the knot $11a362$ is the pretzel knot $P(5,5,3)$, which has unknotting number $> 1$ by a result of Kobayashi \cite{Kobayashi}.


\section{Alternating $3$-braid closures with signature 2.}\label{s: sig 2}

Suppose that $K$ is the closure of an alternating $3$-braid $\sigma_1^{-a_1} \sigma_2^{b_1} \cdots \sigma_1^{-a_m} \sigma_2^{b_m}$ with $m \geq 1$ and all $a_i, b_i \geq 1$.  We assume that $K$ has unknotting number one and $\sigma(K) = 2$.  According to Proposition \ref{p: sig},
\begin{equation}\label{e: r sig 2} r := \sum_{i=1}^m b_i = \sum_{i=1}^m a_i - 2.
\end{equation}  By \cite[Theorem 5.6]{Msig}, $D = \det(K) \equiv \sigma(K) + 1 \; (\mod 4)$, so we write $D = 2n-1$ with $n$ even.

Now express $G_K \oplus R_n = -A A^T$ according to Theorem \ref{t: criterion}.  By abuse of notation, we identify the vertices $v_i$ of the white graph $\Gamma$ with the corresponding rows of $A$.  Thus, we write $v_i = (v_{i1},\dots,v_{i(r+2)})$.  Set
\begin{equation} v  = v_1 + \cdots + v_r
\end{equation} and observe that
\begin{equation}\label{e: vector sum} v^2 =  2r+\sum_{i=1}^m -2(b_i - 1) + \sum_{i=1}^m (-a_i-2) = 2r - \sum_{i=1}^m (a_i + 2b_i) = -(r+2), \end{equation} making use of Equation (\ref{e: r sig 2}) in the last step.  On the other hand, $v$ is characteristic on the sublattice $H_2(X_K) \oplus H_2(W_K) \subset \Z^{r+2}$, noting in particular that $\langle v,x \rangle  = 0 \equiv -n = x^2 \; (\mod 2)$.  Since $H_2(X_K) \oplus H_2(W_K)$ has odd index in $\Z^{r+2}$, it follows that $v$ is characteristic for $\Z^{r+2}$.  Hence $v \equiv {\bf 1} \; (\mod 2)$, and by Equation (\ref{e: vector sum}) we conclude that $v \in \{ \pm 1 \}^{r+2}$.  Moreover, by replacing each basis vector by its negative as necessary, we may assume that $v =  {\bf 1}$.  Note that in so doing, some of the components of $x$ and $y$ as stated in Theorem \ref{t: criterion} may become negated.  However, it is still the case that the first two entries of $y$ are negatives of one another: this is because $\langle v_i,y \rangle = 0$ for all $i$, and so $0 = \sum_{i=1}^r \langle v_i,y \rangle = \langle v,y \rangle = \langle {\bf 1},y \rangle = -(y_1 + y_2)$.  Again invoking the fact that $\langle v_i,y \rangle = 0$, we learn that \begin{equation}\label{e: vi1 vi2} v_{i1} = v_{i2} \text{ for all } i. \end{equation}

Next, fix an index $i$.  Then \[ \sum_j v_{ij} = - \langle v,v_i \rangle = -(v_i^2 + 2) = \sum_j v_{ij}^2 - 2.\]  Since the $v_{ij}$ are integers, it must be the case that
\begin{equation}\label{e: vij} v_{ij} = 0 \mbox{ or } 1 \mbox{ for all but a single index } j, \mbox{ for which } v_{ij} = -1 \mbox{ or } 2. \end{equation}  Since $\sum_{i=1}^r v_{i1} = 1$, it follows that there is some index $i$ for which $v_{i1} \ne 0$.  Now (\ref{e: vi1 vi2}) and (\ref{e: vij}) together imply that $v_{i1} = v_{i2} = 1$.  Moreover, we see that the index $i$ is unique, and so $v_{j1} = v_{j2} = 0$ for all $j \ne i$.  In addition, $v_i$ must have some non-zero coordinate besides $v_{i1}$ and $v_{i2}$, since $\langle v_i,v_{i+1} \rangle = 1$.   It follows that $v_i^2 < -2$.  Consequently, the corresponding vertex $v_i$ in the white graph $\Gamma$ has some edge to the marked vertex $v_{r+1}$.  By changing the crossing in the knot diagram corresponding to this edge, the result is an almost-alternating $3$-braid closure $K'$ whose Goeritz matrix is given by $G' = - C C^T$.  It follows that $\det(K') = |\det(G')| = 1$, and now Proposition \ref{p: almost-alt} implies that $K' = U$.  Thus, the embedding matrix $A$ guaranteed by Theorem \ref{t: criterion} identifies an unknotting crossing in the given alternating 3-braid diagram, namely the one between $v_i$ and $v_{r+1}$.  This establishes Theorem \ref{thm: main} for the case of an alternating 3-braid knot with non-zero signature.

\section{Alternating $3$-braid closures with signature 0.}\label{s: sig 0}



Suppose that $K$ is the closure of an alternating $3$-braid with unknotting number one, notated as in Section \ref{s: sig 2}, and assume that $\sigma(K) = 0$.  We write $D = 2n-1$ with $n$ odd.   According to Proposition \ref{p: sig}, \begin{equation} r := \sum_{i=1}^m b_i = \sum_{i=1}^m a_i. \end{equation}

Express $G_K \oplus R_n = -A A^T$ according to Theorem \ref{t: criterion}.  We proceed as in Section \ref{s: sig 2}.  Here, however, we set \[ v = v_1 + \cdots v_r + y \] and observe that \[ v^2 =  2r+\sum_{i=1}^m -2(a_i - 1) + \sum_{i=1}^m (-b_i-2) + y^2 = 2r -2 - \sum_{i=1}^m (2a_i + b_i) = -(r+2), \] as before.  Moreover, we deduce that $v$ is characteristic, noting in particular that $\langle v,x \rangle = 1 \equiv -n = x^2 \; (\mod 2)$, and conclude in the same way as before that $v \in \{ \pm 1 \}^{r+2}$.  Replace each basis vector by its negative as necessary so that $v = {\bf 1}$, possibly altering $x$ and $y$ from the precise form stated in Theorem \ref{t: criterion}.  Indeed, it is now the case that $y_1 = y_2 = 1$, since $-y_1-y_2 = \langle {\bf 1}, y \rangle = \langle v, y \rangle = -2$.  This implies at once that $v_{i1} = -v_{i2}$ for all $i$, and that $\sum_i v_{i1} = \sum_i v_{i2} = 0$.  Furthermore, (\ref{e: vij}) holds here as well.  If $v_{i1} = v_{i2} = 0$ for all $i$, then $G = -C C^T$ and so $\det(K) = |\det(G)| = 1$.  Since $K$ is alternating, this implies that $K$ is the unknot, which is a contradiction because this knot has unknotting number {\em zero}.  Consequently, there is some index $i$ for which $v_{i1} \ne 0$.  Since $v_{i1} = - v_{i2}$, it follows from (\ref{e: vij}) that $v_{i1} = \pm 1$.  Moreover, since $\sum_i v_{i1} = 0$, there is another index $j$ for which $v_{j1} \ne 0$, and which we may therefore take to have the opposite sign as $v_{i1}$.  Now suppose that there were some third index $k$ for which $v_{k1} \ne 0$.  Then $v_k$ agrees in its first two coordinates with one of $v_i$ and $v_j$, which we may take to be $v_i$, without loss of generality.  Appealing to (\ref{e: vij}) again, the other coordinates of $v_i$ and $v_k$ are all non-negative, and so $0 \leq \langle v_i, v_k \rangle \leq -(v_{i1} v_{k1} + v_{i2} v_{k2}) = -2$, a contradiction.  In total, we have obtained the following result.

\begin{lem}\label{l: v_i & v_j}

In the matrix $A$ guaranteed by Theorem \ref{t: criterion}, we can negate some of its columns so that $v := v_1 + \cdots + v_r + y = {\bf 1}$.  In so doing, $y$ takes the form $(1,1,0,\dots,0)$, and there exist a pair of indices $i$ and $j$ for which $v_{i1} = -v_{i2} = -v_{j1} = v_{j2} = 1$, and $v_{k1} = v_{k2} = 0$ for all $k \ne i,j$.

\end{lem}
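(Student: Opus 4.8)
The plan is to imitate the signature-$2$ argument of Section \ref{s: sig 2} almost verbatim, with one structural modification forced by the parity of $n$. There one sums the first $r$ rows of $A$ and shows the result must be the all-ones vector; here $n$ is odd rather than even, and summing $v_1 + \cdots + v_r$ by itself will fail the characteristic test against $x$ (that sum pairs to $0$ with $x$, while $x^2 = -n$ is odd). So I would instead set $v := v_1 + \cdots + v_r + y$, the extra summand $y$ being exactly what repairs the parity. The goal is to prove that, after negating columns of $A$, one has $v = {\bf 1}$, and then to read off the asserted shape of $y$ and of the first two columns of the $v_i$.

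First I would compute $v^2$. Using the explicit Goeritz form of Subsection \ref{ss: Goeritz}, the relation $\sum a_i = \sum b_i = r$ coming from $\sigma(K)=0$ via Proposition \ref{p: sig}, and $y^2 = -2$, routine bookkeeping gives $v^2 = -(r+2)$, so $v$ has full Euclidean length $r+2$ in $\Z^{r+2}$. Next I would check that $v$ is characteristic. Pairing against each $v_i$ gives $\langle v,v_i\rangle = v_i^2 + 2 \equiv v_i^2 \pmod 2$ (the off-diagonal Goeritz row-sum along the cycle is exactly $2$), and pairing against $y$ gives $y^2 \equiv 0$; the delicate case is $x$, where $\langle v,x\rangle = \langle y,x\rangle = 1$ matches $x^2 = -n$ precisely because $n$ is odd. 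Since $H_2(X_K)\oplus H_2(W_K)$ sits in $\Z^{r+2}$ with odd index $D$, being characteristic on this sublattice forces $v$ to be characteristic in $\Z^{r+2}$, i.e. $v \equiv {\bf 1} \pmod 2$. Combined with $v^2 = -(r+2)$, every entry of $v$ must be $\pm 1$, so $v \in \{\pm 1\}^{r+2}$, and after negating columns I may take $v = {\bf 1}$. From $\langle v,y\rangle = -2$ (so $\sum_j y_j = 2$) together with $y^2 = -2$ (so $\sum_j y_j^2 = 2$) I read off $y = (1,1,0,\dots,0)$; then $\langle v_i,y\rangle = 0$ yields $v_{i1} = -v_{i2}$ for all $i$, and the first two coordinates of $v = {\bf 1}$ give $\sum_i v_{i1} = \sum_i v_{i2} = 0$.

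It remains to extract the two special indices. The coordinate constraint (\ref{e: vij})---that each $v_i$ has all entries in $\{0,1\}$ save one in $\{-1,2\}$---holds here by the same computation as in Section \ref{s: sig 2}. If every $v_{i1}$ (equivalently every $v_{i2}$) vanished, then $G_K = -CC^T$ and $\det(K) = |\det(C)|^2 = 1$ by Theorem \ref{t: criterion}, forcing the alternating knot $K$ to be the unknot and contradicting $u(K)=1$; so some $v_{i1} \ne 0$, and since $v_{i1} = -v_{i2}$, constraint (\ref{e: vij}) pins it to $\pm 1$. The zero-sum $\sum_i v_{i1} = 0$ then produces a second index $j$ with $v_{j1}$ of the opposite sign, yielding the pattern $v_{i1} = -v_{i2} = -v_{j1} = v_{j2} = 1$.

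The main obstacle is ruling out a third index $k$ with $v_{k1} \ne 0$. Here I would note that $v_k$ must agree in its first two coordinates with one of $v_i, v_j$---say $v_i$---and then exploit (\ref{e: vij}): off the single special coordinate all entries of $v_i$ and $v_k$ are non-negative, so $\langle v_i, v_k\rangle \le -(v_{i1}v_{k1} + v_{i2}v_{k2}) = -2$, whereas off-diagonal Goeritz entries satisfy $\langle v_i, v_k\rangle = (G_K)_{ik} \ge 0$, a contradiction. This last step is the crux: it is where the non-negativity built into the $3$-braid Goeritz form collides with the forced overlap of the two ``spoke'' rows, and it is what makes the $3$-braid hypothesis do genuine work.
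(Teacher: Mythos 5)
Your proposal is correct and follows the paper's own proof in Section \ref{s: sig 0} essentially step for step: the same vector $v = v_1 + \cdots + v_r + y$ with $v^2 = -(r+2)$, the same characteristic/odd-index argument forcing $v \in \{\pm 1\}^{r+2}$, the same determinant-one contradiction ruling out $v_{i1} = v_{i2} = 0$ for all $i$, and the identical use of (\ref{e: vij}) to exclude a third index via $\langle v_i, v_k \rangle \leq -2$. The only cosmetic difference is how you read off $y = (1,1,0,\dots,0)$ (from $\sum_j y_j = 2$ and $\sum_j y_j^2 = 2$ rather than pairing ${\bf 1}$ against $y$ directly), which is immaterial.
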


The vectors $v_i$ and $v_j$ appearing in Lemma \ref{l: v_i & v_j} play the same role in the present situation as the distinguished vector $v_i$ did in Section \ref{s: sig 2}.  The remainder of the proof of Theorem \ref{thm: main} in the case of signature $0$ reduces to establishing the following claim.

\begin{claim}\label{claim: (v_i,v_j) = 1}

The vectors $v_i$ and $v_j$ guaranteed by Lemma \ref{l: v_i & v_j} obey $\langle v_i, v_j \rangle \ne 0$.

\end{claim}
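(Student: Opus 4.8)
The plan is to argue by contradiction: suppose $\langle v_i, v_j \rangle = 0$, and show this forces the change-making condition (\ref{e: change}) to fail, contradicting Theorem \ref{t: criterion}. The intuition comes from the $10_{79}$ example in Subsection \ref{ss: example}, where exactly this phenomenon occurs: the two rows carrying the nonzero first-column entries are non-adjacent in the white graph, and the resulting matrix violates change-making. So the geometric content of the claim is that $v_i$ and $v_j$ must correspond to adjacent vertices of the cycle in $\Gamma$, and the combinatorial content is that non-adjacency is detectable via the change-making condition on $x$.

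First I would record the structural constraints already in hand. By Lemma \ref{l: v_i & v_j}, $v_i$ and $v_j$ are the only rows with nonzero entries in columns $1,2$, with $v_{i1}=-v_{i2}=1$ and $v_{j1}=-v_{j2}=-1$. From the Goeritz form of an alternating $3$-braid, the vertices $v_1,\dots,v_r$ lie in a cycle, so the off-diagonal pairings $\langle v_k, v_\ell \rangle$ are $1$ for cyclically adjacent indices and $0$ otherwise (ignoring the marked vertex $v_{r+1}$). Meanwhile property (\ref{e: vij}) constrains each row to be a $\{0,1\}$-vector with a single exceptional coordinate of value $-1$ or $2$. The key relation to exploit is $C\overline{x} = -z$ from the proof of Theorem \ref{t: criterion}, where $z$ is the (common) first and second column of the upper $r$ rows; here $z$ has entries $z_i = v_{i1}=1$, $z_j=v_{j1}=-1$, and $z_k=0$ otherwise. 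Thus the vector $\overline{x}=(x_3,\dots,x_{r+2})$ is determined as $-C^{-1}z$, and since $\det C = \pm 1$ this is an integer vector.

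Next I would analyze what $\langle v_i,v_j\rangle=0$ implies about the supports of the rows on the orthonormal coordinates $3,\dots,r+2$. Using (\ref{e: vij}) together with the pairings dictated by the cycle structure, the condition $\langle v_i,v_j\rangle=0$ (rather than $1$) says precisely that $v_i$ and $v_j$ are \emph{not} cyclically adjacent. The plan is to track how the values $x_3,\dots,x_{r+2}$ are forced by the equations $\langle x, v_k\rangle = 0$ for $k=1,\dots,r$ (which encode $C\overline x = -z$): each such equation is a local recursion along the cycle, and reading it around the two arcs of the cycle cut out by $v_i$ and $v_j$ produces the $x$-values as partial sums. The main computation is to show that when the arcs have length $\geq 2$ on both sides (i.e. $v_i, v_j$ non-adjacent), the sequence of $x$-values, after sorting, contains a jump exceeding $1+\sum$ of the smaller entries, i.e. violates (\ref{e: change}).

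The hard part will be the last step: converting the non-adjacency of $v_i$ and $v_j$ into a concrete arithmetic obstruction to change-making. I expect the cleanest route is to solve the recursion $C\overline x = -z$ explicitly along the cycle, exhibiting the $x$-values as (signed) partial sums of an integer sequence whose structure is governed by the positions of $i$ and $j$; the contradiction should emerge because a ``gap'' in the cycle forces either a repeated large value or a value too big to be reachable by the smaller coins. It may be more efficient to argue contrapositively, showing that if change-making \emph{holds} then the reachable-value interpretation of $\{\sum \beta_k x_k\}$ being a full interval forces the two distinguished rows to sit adjacently, mirroring the heuristic in Subsection \ref{ss: example}. This is where the methodology inspired by Lisca \cite{Lisca} enters, and where the detailed combinatorics of the embedding must be unwound; I would isolate it as the technical core and handle the cycle geometry first so that the arithmetic obstruction is the only remaining ingredient.
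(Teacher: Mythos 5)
Your endgame coincides with the paper's: assume $\langle v_i, v_j \rangle = 0$, exploit the orthogonality relations $C\overline{x} = (0,\dots,0,1,-1)^T$ together with $\det C = \pm 1$, and derive a violation of the change-making condition (\ref{e: change}). Indeed, in the paper the contradiction is exactly of the flavor you predict: after putting $C$ in a normal form, one finds $\overline{x} = (t, m_1 t, \dots, m_k t, u, n_1 u, \dots, n_\ell u)$ with $m_s, n_s \geq 1$ and $(t,u) = \pm(\delta+\beta, -(\gamma+\alpha))$ where $\alpha,\beta,\gamma,\delta \geq 1$, so every ``coin'' has value at least $2$ and one cannot make change for the amount $1$.

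However, there is a genuine gap at the step you call a ``local recursion along the cycle.'' The equations $\langle x, v_k \rangle = 0$ are written in the orthonormal coordinates of $\Z^{r+2}$, so the system $C\overline{x} = -z$ is governed by the actual coordinate vectors $v_{kj}$ --- that is, by the \emph{embedding} --- and not by the Gram matrix alone. Knowing that the Goeritz pairings follow the cycle pattern tells you nothing direct about which columns the supports of the various rows share; a fixed Gram matrix can admit genuinely inequivalent embeddings into the standard lattice (compare the two matrices $A_1$ and $A_2$ in the proof of Proposition \ref{p: non-qa}), and different embeddings give different matrices $C$, hence different vectors $\overline{x}$. So ``reading the equations around the two arcs cut out by $v_i$ and $v_j$'' is not a well-defined recursion until the overlap structure of the rows is determined. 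Supplying exactly that structure is the technical core of the paper's argument: it isolates the submatrix on the rows $v_1,\dots,v_r,y$ as an element of the set $\B_0$ (Definition \ref{d: B}), classifies $\B_0$ by Lisca-style contraction/expansion (Lemma \ref{l: expand}), shows that $\langle v_i, v_j \rangle = 0$ forces the matrix to be built from $M_1$ or $M_2$ by expansions of the first type only (Lemma \ref{l: expand 2}), and only then extracts the block-triangular form (\ref{e: C}) of $C$ (Lemma \ref{l: C structure}) from which the formula for $\overline{x}$ and the arithmetic contradiction follow. Your proposal defers precisely this classification to ``the cycle geometry,'' so as written it assumes what needs to be proved; the constraints you do cite --- Lemma \ref{l: v_i & v_j} and (\ref{e: vij}) --- are the correct starting point, but they are far from pinning down $C$, and closing that distance is where essentially all of the work of Section \ref{s: sig 0} lies.
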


\noindent Thus, when $r > 2$, Claim \ref{claim: (v_i,v_j) = 1} amounts to the assertion that $\langle v_i, v_j \rangle = 1$.  To see how Theorem \ref{thm: main} follows, suppose that Claim \ref{claim: (v_i,v_j) = 1} holds.  Then the regions corresponding to $v_i$ and $v_j$ abut at some crossing.  Change it.  The result is a new 3-braid knot $K'$ with Goeritz matrix $G' = -C C^T$.  This is an almost-alternating 3-braid knot with determinant 1, so we conclude once more by Proposition \ref{p: almost-alt} that $K' = U$.  Hence Theorem \ref{t: criterion} again identifies an unknotting crossing in the 3-braid diagram of $K$, this time one adjoining the regions $v_i$ and $v_j$.

Curiously, Claim \ref{claim: (v_i,v_j) = 1} appears to require a substantial effort to establish, and we prove it over the course of the next few Subsections.  It would be very satisfying to obtain a proof of Theorem \ref{thm: main} in the case of signature 0 which is nearly as simple as the case of signature 2.  However, more effort is definitely needed in this case, since the change-making condition (\ref{e: change}) did not come to bear in section \ref{s: sig 2}, but it does in the present situation; compare the example of $10_{79}$ in Subsection \ref{ss: example}.

Here is an overview of our approach.  We want to study a matrix $A$ which fulfills the conclusions of Theorem \ref{t: criterion}.  To do so, we first focus on the submatrix $B$ of $A$ spanned by the rows $v_1,\dots,v_r,$ and $y$.  More precisely, we make the following definition.

\begin{defin}\label{d: B}

Given positive integers $a_1,\dots,a_m,b_1,\dots,b_m$, let $K$ denote the closure of $x^{a_1} y^{-b_1} \cdots x^{a_m} y^{-b_m}$, and let $G_K$ denote its Goeritz matrix.  Let $\B$ denote the set of those $(r+1) \times (r+2)$ integer matrices $B$ for which $- B B^T = G_K \oplus (-2)$ for some such $G_K$ and whose rows sum to {\bf 1}, and $\B_0 \subset \B$ those for which $\sum_i a_i = \sum_i b_i$.

\end{defin}

\noindent In Subsection \ref{ss: B_0}, we describe how any matrix $B \in \B_0$ can be built up by a simple process from one of three small matrices.  This is accomplished in Lemma \ref{l: expand}.  In Subsection \ref{ss: B_0 2}, we sharpen this construction in the case of a matrix $B \in \B_0$ for which $\langle v_i, v_j \rangle = 0$, which we describe precisely in Lemma \ref{l: expand 2} and refine somewhat in Lemma \ref{l: C structure}.  Finally, in Subsection \ref{ss: killer}, we show that no such matrix $B$ can be extended to a matrix $A$ fulfilling the conclusions of Theorem \ref{t: criterion}.  This establishes Claim \ref{claim: (v_i,v_j) = 1} and hence Theorem \ref{t: criterion}.



\subsection{Contraction, expansion, and the set $\B_0$.}\label{ss: B_0}
Drawing inspiration from Lisca's work \cite{Lisca}, we characterize the matrices in the set $\B_0$ by means of the process of {\em expansion}. Before introducing this notion, we state a preparatory Lemma.

\begin{lem}\label{l: B columns}

Given $B \in \B_0$, the multi-set of non-zero values in any column takes the form $\{1,1,-1\}$, $\{2,-1\}$, or $\{1\}$.

\end{lem}

\begin{proof}

We proceed in two steps, relying in each on (\ref{e: vij}).

{\em 1. If a column contains a $-1$, then it contains a single $-1$.}  For suppose that $v_s$ and $v_t$ were two distinct rows containing a $-1$ in the same column.  Every other entry in these rows is non-negative, which implies that $\langle v_s, v_t \rangle \leq -1$, a contradiction.

{\em 2. If a column contains a $2$, then it contains a $-1$, and every other entry is $0$.}  If a column contains a $2$, then for the column sum to equal $1$, it must contain some negative entry, and the only possibility is a $-1$.  If there were some additional non-zero entry in the column, then to keep the column sum $1$, there must again be another $-1$ entry, in contradiction to the first step of the argument.

The statement of the Lemma now follows.

\end{proof}

Now choose a row $v_s$ of $B$, and suppose that $v_s^2 = -2$.  If $v_s = v_i$ or $v_j$, then $\langle v_i, v_j \rangle = 2$ and $r=2$.  We handle this case separately in a moment, so for now assume that $r > 2$.  Now the entries of $v_s$ consist of one 1, one -1, and the rest 0's.  Consider the submatrix of $B$ induced on the columns containing the support of $v_s$ and on the rows whose support meets these two columns.  In light of Lemma \ref{l: B columns}, there are three possibilities.  In each case, the row induced by $v_s$ is the one containing both $1$ and $-1$.

\[ \left( \begin{matrix} 0 & 1 \\ 1 & -1 \\ 0 & 1 \end{matrix} \right), \quad \left( \begin{matrix} 1 & 2 \\ 1 & -1 \\ -1 & 0 \end{matrix} \right), \quad \left( \begin{matrix} 1 & 1 \\ 0 & 1 \\ 1 & -1 \\ -1 & 0 \end{matrix} \right). \]  Let $a$ (resp. $b$) denote in each case the row of $B$ which induces the one appearing directly above (resp. below) the one induced by $v_s$.  Let $c$ denote the remaining row in the third case.  Note that in the first case, not both of $a$ and $b$ can have square $-2$, as $\langle a, b \rangle \geq 0$.  So we assume without loss of generality that $a^2 < -2$ in that case.

\begin{defin}\label{d: contract}

By {\em contraction} we mean the following process.  Modify the vectors $a,b$, and in the third case $c$ as well, so that they induce one of the following patterns on the column containing the $-1$ entry of $v_s$:

\[ \left( \begin{matrix} 1 \\ 0 \end{matrix} \right), \quad \left( \begin{matrix} 2 \\ -1  \end{matrix} \right), \quad \left( \begin{matrix} 1 \\ 1 \\ -1 \end{matrix} \right). \]  Then delete the row $v_s$ and the column containing its $1$ entry.

\end{defin} \noindent Note that Lisca's definition of contraction \cite[Definition 3.4]{Lisca} coincides with a contraction of the first type in Definition \ref{d: contract}.  Indeed, this type will be the one we work with most often.

Let $a',b',c'$ denote the modified vectors.  Observe that $\langle a', b' \rangle = \langle a, b \rangle - 1$ following contraction, and that the pairing between any other pair of distinct vectors remains unchanged.  Furthermore, if $r = 3$, then $\langle a', b' \rangle = 2$, and if $r > 3$, then $\langle a', b' \rangle = 1$.  It follows that contraction carries a matrix $B \in \B$ to another matrix in $\B$ of smaller rank.  Moreover, if we denote by $\{ a'_i, b'_i \}$ the parameters corresponding to the contracted matrix, then $\sum_i a'_i = \sum_i b'_i$.  Therefore, the process of contraction carries a matrix $B \in \B_0$ to smaller one in $\B_0$.  It follows that by applying a sequence of contractions to any $B \in \B_0$, we obtain a matrix in $\B_0$ for which $r = 2$, or $r > 2$ and every $v_k$ has square $< -2$.  In the first case, we obtain the two possibilities

\begin{equation}\label{e: possibility 1 & 2} M_1 = \left( \begin{matrix} 1 & -1 & 1 & 1 \\ -1 & 1 & 0 & 0 \\ \hline 1 & 1 & 0 & 0 \end{matrix} \right), \quad M_2 = \left( \begin{matrix} 1 & -1 & 1 & 0 \\ -1 & 1 & 0 & 1 \\ \hline 1 & 1 & 0 & 0 \end{matrix} \right). \end{equation}  In the second case, we have $b_i = 1$ for all $i$, and so $a_i = 1$ for all $i$ as well.  Hence every row except the last has square exactly $-3$.  To satisfy $\langle v_i, v_j \rangle \geq 0$, it must be that $\langle v_i, v_j \rangle = 1$, and so $v_{i3} = v_{j3} = 1$ on permuting the columns.  Hence there is an index $k$ for which $v_{k3} = -1$.  It follows that $\langle v_i, v_k \rangle = \langle v_j, v_k \rangle = 1$, and we obtain the single possibility \begin{equation}\label{e: possibility 3} M_3 = \left( \begin{matrix} 0 & 0 & -1 & 1 & 1 \\ 1 & -1 & 1 & 0 & 0 \\ -1 & 1 & 1 & 0 & 0 \\ \hline 1 & 1 & 0 & 0 & 0 \end{matrix} \right). \end{equation}  Since contraction preserves the presence of a 2 in a matrix, and none of the matrices $M_1, M_2, M_3$ of (\ref{e: possibility 1 & 2}) and (\ref{e: possibility 3}) has one, it follows that only the first and third type of contraction occur in this process.  Before summarizing the foregoing, we make one further definition.

\begin{defin}\label{d: expand}

By {\em expansion} we mean the reverse process to contraction.  More precisely, identify vectors $a',b',$ and in the third case $c'$, for which $\langle a', b' \rangle \geq 1$, there is a column distinct from the first or second whose support is contained amongst these rows, and the submatrix induced on these rows and this column takes one of the forms displayed in Definition \ref{d: contract}.  Add a new column, a new row $v_s$, and modify the primed vectors so that together with $v_s$ they meet the two columns in one of the three patterns displayed just before Definition \ref{d: contract}.  Moreover, we require that $v_s^2 = - 2$, and the support of the two columns meets only the rows involved here.

\end{defin}

Thus, we may rephrase the preceding deductions as follows.

\begin{lem}\label{l: expand}

The set $\B_0$ consists of those matrices obtained from one of the matrices $M_1, M_2, M_3$ displayed in (\ref{e: possibility 1 & 2}) and (\ref{e: possibility 3}) by means of a sequence of expansions of the first and third type, up to permutations of its rows and columns.

\end{lem}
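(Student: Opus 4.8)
The plan is to prove Lemma \ref{l: expand} by establishing that contraction and expansion are mutually inverse operations on $\B_0$, and that iterated contraction from any $B \in \B_0$ terminates at one of $M_1, M_2, M_3$. The bulk of the argument has in fact already been assembled in the preceding discussion, so the proof amounts to organizing those deductions into the two required directions. For the forward direction, I would argue that starting from any $B \in \B_0$ and repeatedly applying contraction reduces the rank while staying inside $\B_0$ (this was verified above: contraction lowers $\langle a',b'\rangle$ by exactly $1$, preserves the class $\B$, preserves the balance condition $\sum_i a_i = \sum_i b_i$, and lowers the rank). Since the rank strictly decreases and is bounded below, the process terminates, and at termination we have either $r=2$ or $r>2$ with every $v_k^2 < -2$. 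The case analysis carried out just before Definition \ref{d: expand} shows the only terminal matrices are $M_1, M_2, M_3$, and that only the first and third types of contraction ever arise (since none of the $M_i$ contains a $2$, and contraction preserves the presence of a $2$).

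For the reverse direction, I would verify that expansion, as defined in Definition \ref{d: expand}, is genuinely the inverse of contraction and that it carries $\B_0$ back into $\B_0$. The key point is that expansion introduces a new row $v_s$ with $v_s^2 = -2$ and a new column, raising the rank by one while increasing $\langle a', b' \rangle$ by $1$ and leaving all other pairings intact; one must check that the resulting matrix $B'$ still satisfies $-B'B'^T = G_{K'} \oplus (-2)$ for an appropriate Goeritz matrix $G_{K'}$ and still has rows summing to $\mathbf{1}$. The row-sum condition follows because each of the three insertion patterns is designed so that the new column sums to $1$ and the modification of the primed vectors preserves their existing row sums; the inner-product condition follows from the local computation that the induced submatrix realizes exactly the prescribed pairings. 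One then observes that the balance $\sum_i a_i = \sum_i b_i$ is maintained under expansion of the first and third type, so $B' \in \B_0$.

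The main subtlety, and the step I would treat most carefully, is confirming that \emph{every} matrix in $\B_0$ genuinely arises this way, i.e. that the reachability is exactly characterized rather than merely contained in one direction. Concretely, I must ensure that whenever $r>2$ and not all rows have square $< -2$, there is always a legitimate row $v_s$ with $v_s^2 = -2$ on which to contract, and that the local structure around its support matches one of the three patterns exhibited above. This is where Lemma \ref{l: B columns} does the essential work: it constrains the multi-set of nonzero entries in each column to $\{1,1,-1\}$, $\{2,-1\}$, or $\{1\}$, which forces the submatrix induced on the support of a square-$(-2)$ row into exactly the three displayed shapes. I would invoke Lemma \ref{l: B columns} to rule out any other local configuration, thereby guaranteeing that contraction is always applicable and well-defined at each stage. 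With these pieces in place, the equivalence ``reachable by expansions from some $M_i$'' $\iff$ ``lies in $\B_0$'' follows, and the lemma is proved up to the stated permutations of rows and columns, which do not affect membership in $\B_0$ or the defining relation $-BB^T = G_K \oplus (-2)$.
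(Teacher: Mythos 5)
Your proposal is correct and takes essentially the same approach as the paper, which states this lemma expressly as a summary of the preceding deductions: contraction preserves $\B_0$ and lowers rank, iterated contraction terminates at one of $M_1$, $M_2$, $M_3$, only the first and third types occur because contraction preserves the presence of a $2$ and none of the $M_i$ contains one, and expansion is by definition the reverse process, with Lemma \ref{l: B columns} forcing the three local patterns around any square-$(-2)$ row. The extra care you devote to the converse inclusion (that expansions of the first and third type carry $\B_0$ back into $\B_0$) is the same point the paper folds into the requirements of Definition \ref{d: expand}, so no genuinely new ingredient is added or missing.
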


We abuse notation by identifying a row in a matrix with the corresponding row after an expansion.  In particular, every matrix $B \in \B_0$ has a distinguished pair of rows $v_i$ and $v_j$.


\subsection{Further analysis.}\label{ss: B_0 2}

Having characterized the matrices in the set $\B_0$, we now obtain finer information about a matrix $B \in \B_0$ for which $\langle v_i, v_j \rangle = 0$.

\begin{defin}\label{d: dist entry}

A {\em distinguished entry} in a matrix is one which is the only non-zero value in its column.

\end{defin}

Evidently, the process of expansion preserves the number of distinguished entries in a matrix.  Since the matrices appearing in (\ref{e: possibility 1 & 2}) and (\ref{e: possibility 3}) have two apiece, so does every $B \in \B_0$.

\begin{lem}\label{l: dist entry}

If two distinct rows $v_t$ and $v_u$ of $B \in \B_0$ contain a distinguished entry, then either $B = M_2$, or one of those rows has square $-2$.

\end{lem}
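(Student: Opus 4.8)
The plan is to induct on the number of expansions used to build $B$ from one of $M_1, M_2, M_3$, as furnished by Lemma \ref{l: expand}, tracking the two distinguished entries together with the squares of the rows that carry them. Recall that a distinguished entry is a lone nonzero in its column, so by Lemma \ref{l: B columns} it must equal $1$, and being a lone $1$ it lies in one of the first $r$ rows, never in $y = (1,1,0,\dots,0)$ (whose two nonzero entries sit in columns of type $\{1,1,-1\}$). I would prove the following invariant for every $B \in \B_0$: either $B = M_2$, or the two distinguished entries lie in a common row, or they lie in distinct rows at least one of which has square $-2$. The Lemma is the immediate consequence of this invariant. For the base cases one checks directly that $M_1$ and $M_3$ place both distinguished entries in a single row, while $M_2$ is the listed exception.

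For the inductive step, the first observation is that a first-type expansion is always performed \emph{at} a distinguished entry: its defining column carries the pattern $\left(\begin{smallmatrix}1\\0\end{smallmatrix}\right)$ with support confined to the two rows $a',b'$, so the $1$ sits alone in the row $a'$. Such an expansion therefore relocates that distinguished entry off of $a'$ and onto the freshly created row $v_s$, whose square is $-2$, while leaving the other distinguished entry fixed, since the only change to $b'$ occurs in the column of $a'$ rather than in a distinguished column. Consequently, whatever the previous configuration, after a first-type expansion the two entries occupy distinct rows one of which, namely $v_s$, has square $-2$; the invariant is maintained. This argument applies verbatim to the matrices descending from $M_2$, since only first-type expansions apply directly to $M_2$ (it has no column of type $\{1,1,-1\}$ outside its first two, so no third-type expansion is available).

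The third-type expansion demands more care, and this is the main obstacle. Such an expansion neither creates, destroys, nor relocates any distinguished entry, so the two carrier rows persist; the only danger is that it modifies one of them and lowers its square. Comparing the column patterns of Definition \ref{d: contract} before and after expansion shows that, among the rows it touches, only the row $a$ strictly decreases in square, as it gains a $1$, whereas $b$ merely relocates its special entry and $c$ is untouched. The key point to verify is that the row $a$ can never simultaneously carry a distinguished entry and have square $-2$: in the expansion pattern $a$ already owns a $1$ in the shared column (of type $\{1,1,-1\}$, hence non-distinguished), so if it also owned a distinguished $1$ it would possess at least two entries equal to $1$ alongside its unique special entry guaranteed by (\ref{e: vij}), forcing its square to be at most $-3$. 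Hence a distinguished-entry row of square $-2$ is never the modified row $a$, and so survives a third-type expansion with its square intact; a configuration with both entries in one row likewise survives. This preserves the invariant in every case and completes the induction, establishing the Lemma.
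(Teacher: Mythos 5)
Your proof is essentially correct, but it takes a genuinely different route from the paper's. The paper's argument is short and non-inductive: assuming neither $v_t$ nor $v_u$ has square $-2$, it deletes the two distinguished columns from $B$, notes that the resulting matrix $\widehat{B}$ has too small a rank for $-\widehat{B}\widehat{B}^T = \widehat{G} \oplus (-2)$ to be nonsingular, interprets $\widehat{G}$ as the Goeritz matrix of the white graph with one edge to the hub vertex $v_{r+1}$ removed at each of $v_t, v_u$ (this is exactly where the hypothesis that neither square is $-2$ enters), and then invokes the Matrix--Tree Theorem: $\det(\widehat{G}) = 0$ means zero spanning trees, forcing $v_{r+1}$ to become isolated, hence $\sum_i a_i = 2$ and $B = M_2$. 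Your induction over the expansion sequence of Lemma \ref{l: expand} avoids the Matrix--Tree Theorem entirely and tracks the two distinguished entries step by step; it yields somewhat finer structural information (away from $M_2$, whenever the entries occupy distinct rows, one of those rows is the square-$(-2)$ row $v_s$ created by the last first-type expansion), at the cost of a longer case analysis. Your base cases, the observation that a first-type expansion always occurs at a distinguished column, and the resulting preservation of the invariant under first-type expansions are all correct.

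Two points need repair, the second of which is a real (though patchable) gap. First, a notational slip: in the third-type expansion of Definitions \ref{d: contract} and \ref{d: expand}, the row whose square strictly drops is $c$ (it gains a $1$ in the new column), while $a$ is untouched and $b$ merely relocates its $-1$; you have swapped the roles of $a$ and $c$. This is harmless, because your key claim only uses that the modified row owns a $1$ in the shared $\{1,1,-1\}$ column, which is true of $c$ just as of $a$. Second, your assertion that a distinguished entry can never lie in the row $y$ is justified only by a parenthetical remark that is evident for $M_1, M_2, M_3$ but is not proven for a general $B \in \B_0$ --- and the assertion is genuinely load-bearing: your key claim invokes (\ref{e: vij}), which holds for the rows $v_1, \dots, v_r$ but fails for $y$ (whose entries are two $1$'s with no special $-1$ or $2$ entry), so a square-$(-2)$ row carrying both a distinguished entry and a $1$ in the shared column is not a priori impossible if that row is $y$. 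The fix fits naturally inside your own induction: carry the additional invariant that neither distinguished column has its nonzero entry in $y$. It holds in the base cases; in a first-type expansion the new distinguished entry lands in $v_s \ne y$ while the other distinguished column is untouched; in a third-type expansion no distinguished column is touched at all; and $y$ can never serve as $a'$ or $b'$, since $\langle y, v_k \rangle = 0$ for every $k$ whereas an expansion requires $\langle a', b' \rangle \geq 1$. With that invariant in place, any row carrying a distinguished entry is among $v_1, \dots, v_r$, so (\ref{e: vij}) applies and your argument closes.
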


\begin{proof}

Suppose that neither row has square $-2$, and delete the columns containing the distinguished entries from $B$.  The result is an $r \times (r-1)$ matrix $\widehat{B}$, hence of rank $\leq r-1$.  On the other hand, $\widehat{B} \widehat{B}^T = \widehat{G} \oplus (-2)$ for a suitable matrix $\widehat{G}$.  Since this is an $r \times r$ matrix of rank $\leq r-1$, it follows that $\det(\widehat{G}) = 0$.  Let $\Gamma$ denote the graph with Goeritz matrix $G$.  By hypothesis, each of the vertices corresponding to $v_t$ and $v_u$ has an edge to the hub vertex $v_{r+1}$.  Removing these two edges results in a graph $\widehat{\Gamma}$ with Goeritz matrix $\widehat{G}$.  By the Matrix-Tree Theorem \cite[Theorem 5.6.8]{Stanley}, the quantity $|\det(\widehat{G})|$ equals the number of spanning trees of $\widehat{\Gamma}$.  Since this value is zero, it must be that $v_{r+1}$ is an isolated vertex in $\widehat{\Gamma}$.  This in turn implies that $r = \sum a_i = 2$, and we see at once that $B = M_2$, as claimed.

\end{proof}

\begin{lem}\label{l: expand 2}

If $B \in \B_0$ and $\langle v_i, v_j \rangle = 0$, then $B$ is constructed from one of $M_1$ or $M_2$ by a sequence of expansions of the first type.

\end{lem}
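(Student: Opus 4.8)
The plan is to start from Lemma \ref{l: expand}, which already exhibits $B$ as a sequence of first- and third-type expansions of one of $M_1,M_2,M_3$, and then to whittle this down in two stages: first discard the base $M_3$, then discard the third-type expansions. The engine for both is a single structural invariant I would establish at the outset: \emph{throughout any expansion sequence, $v_i$ and $v_j$ have all their entries in $\{0,1\}$ outside the first two columns.} Indeed, by Lemma \ref{l: v_i & v_j} these rows carry their exceptional $-1$ in columns $1$ and $2$, so by (\ref{e: vij}) they can never acquire a second exceptional ($-1$ or $2$) entry; and the expansion patterns only ever add $+1$'s to a row, unless it plays the lower role $b$ of a third-type pattern, which demands a $-1$ off the first two columns. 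Thus $v_i,v_j$ can never be the $b$-row of a third-type expansion, and $\langle v_i,v_j\rangle$ is altered only by an expansion whose modified pair is exactly $\{v_i,v_j\}$, which must then be first type and drives $\langle v_i,v_j\rangle$ down by one.

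To eliminate $M_3$ I would track the two distinguished entries, which are created only on the row freshly introduced by an expansion; hence no pre-existing row ever \emph{gains} a distinguished entry. Since $M_3$ begins with both of them on its central row $v_1\neq v_i,v_j$, in any $M_3$-derived matrix neither $v_i$ nor $v_j$ carries a distinguished entry. But a pair-involving first-type expansion needs a distinguished entry on $v_i$ or $v_j$ (as the $a$-row), while a pair-involving third-type expansion is barred by the invariant above. So no expansion ever touches the pair $\{v_i,v_j\}$, and $\langle v_i,v_j\rangle$ is frozen at the value $1$ it has in $M_3$. As $\langle v_i,v_j\rangle=0$ by hypothesis, the base must be $M_1$ or $M_2$.

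For the absence of third-type expansions I would induct on $r$, contracting one first-type row at a time and carrying the strengthened hypothesis $P(B)$: \emph{either $\langle v_i,v_j\rangle=0$, or one of $v_i,v_j$ carries a distinguished entry.} The first point is that $P(B)$ forces the two distinguished entries onto distinct rows: having both on one row is the pattern of $M_1$ and $M_3$, which is preserved by third-type expansions but destroyed by a first-type one, and $M_1,M_2$ admit no third-type expansion as a first move (they have no $\{1,1,-1\}$ column past the first two). Hence a matrix with $r>2$ and both distinguished entries on one row is necessarily $M_3$-derived, so $\langle v_i,v_j\rangle=1$ and $v_i,v_j$ carry no distinguished entry—contradicting $P(B)$. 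With the distinguished entries on two rows, Lemma \ref{l: dist entry} yields a square-$(-2)$ row $v_s$ carrying one of them; by Lemma \ref{l: B columns} the column of its $-1$ is of type $\{1,1,-1\}$, so $v_s$ is first-type contractible. Contracting $v_s$ gives a smaller $B'\in\B_0$, and I would check $P(B')$: if the contraction does not involve $\{v_i,v_j\}$ then $\langle v_i,v_j\rangle$ is unchanged and any distinguished entry on $v_i,v_j$ survives; if it does, then $\langle v_i,v_j\rangle$ rises by one, but the contraction deposits a fresh distinguished entry on the $a$-row, which is $v_i$ or $v_j$. By induction $B'$, and hence $B$, is built from $M_1$ or $M_2$ by first-type expansions.

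The main obstacle is exactly this bookkeeping around pair-involving contractions. One cannot simply maintain $\langle v_i,v_j\rangle=0$ throughout: already the length-four cycle with $\langle v_i,v_j\rangle=0$ has \emph{every} contractible row pair-involving, so any contraction pushes $\langle v_i,v_j\rangle$ up to $1$ and leaves the $\langle v_i,v_j\rangle=0$ regime. The role of the auxiliary clause in $P$ is precisely to certify, past such a step, that one has not drifted into the $M_3$ family. The delicate verifications are that $P$ is genuinely restored after each first-type contraction—in particular that a pair-involving contraction re-creates a distinguished entry on $v_i$ or $v_j$—and that, under $P$, a first-type contractible row always exists, so that no third-type contraction is ever forced. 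Carrying out these two checks cleanly, through the case dichotomy furnished by Lemma \ref{l: dist entry}, is the heart of the argument.
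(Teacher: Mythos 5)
Your proof is correct, and it is built from the same ingredients as the paper's, but it organizes the descent differently, and the difference is worth recording. The shared core: Lemma~\ref{l: dist entry} is what manufactures first-type contractions, distinguished entries are born only on the row that an expansion creates, and any $M_3$-derived matrix has $\langle v_i, v_j\rangle$ frozen at $1$ with no distinguished entry on $v_i$ or $v_j$. The paper, however, performs \emph{all} first-type contractions at once: if the terminal matrix $M$ is not $M_1$ or $M_2$, then by Lemma~\ref{l: dist entry} its two distinguished entries lie on a single row, and the paper then applies third-type contractions to $M$ and shows the result has no rows of square $-2$, hence equals $M_3$; so $B$ itself is $M_3$-derived and the frozen-pairing argument yields the contradiction in one stroke. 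Because the contradiction is derived globally, the drift you identify --- a pair-involving contraction pushing $\langle v_i,v_j\rangle$ from $0$ up to $1$ --- never has to be controlled, and no analogue of your invariant $P$ appears in the paper. Your one-row-at-a-time induction pays for its locality with exactly that invariant; in exchange, you never perform a third-type contraction and never need the paper's identification of $M_3$ as the unique matrix in $\B_0$ with $r>2$ and no rows of square $-2$. You replace it by the forward-tracking claim that a matrix with $r>2$ whose two distinguished entries sit on one row must be $M_3$-derived, since that configuration is preserved by third-type expansions, destroyed by first-type ones, and $M_1$ admits no third-type expansion. Two details to make explicit in a final write-up: when you declare your square $-2$ row first-type contractible, Lemma~\ref{l: B columns} alone still allows its $-1$ to sit in a $\{2,-1\}$ column, so you should note that no matrix in $\B_0$ contains a $2$ (the expansions of Lemma~\ref{l: expand} never create one); and your ``merged implies $M_3$-derived'' step tacitly uses that once the two distinguished entries sit on distinct rows they can never re-merge, which does follow from your own principle that no pre-existing row ever gains a distinguished entry, but deserves a sentence.
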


\begin{proof}

Given $B \in \B_0$, perform a sequence of contractions of the first type until no more are possible.  By Lemmas \ref{l: expand} and \ref{l: dist entry}, we obtain a matrix $M$ which is either one of the two matrices $M_1, M_2$ appearing in (\ref{e: possibility 1 & 2}), or distinct from these two and for which there is just one vector containing a distinguished entry.

Let us pursue this second possibility.  Perform a sequence of contractions of the third type to $M$ until no more are possible, resulting in a matrix $M'$.  Is it possible to perform a contraction of the first type to $M'$?  No, because there is still just one row containing a distinguished entry, and since it contains two such and some other non-zero entry, it has square $< -2$.  Therefore, there are no vectors of square $-2$ in $M'$ besides possibly one of $v_i$ and $v_j$.  However, neither $v_i$ nor $v_j$ can have square $-2$, for then we would have $M' = M_2$, and it is impossible to apply an expansion of the third type to it, so $M = M' = M_2$, in contradiction to the assumption $M \ne M_2$.  Hence, $M$ is the matrix $M_3$ of (\ref{e: possibility 3}).  Consider the sequence of expansions that carries the matrix $M_3$ to the original one $B$.  At no point does either vector $v_i$ or $v_j$ contain a $-1$ amongst the columns apt for expansion, nor does either contain a distinguished entry.  It follows that at no point does their inner product change from what they are in $M_3$; thus the rows $v_i$ and $v_j$ in $B$ have inner product $1$.

Therefore, if $\langle v_i, v_j \rangle = 0$, then the first possibility must occur, which proves the statement of the Lemma.


\end{proof}



\begin{lem}\label{l: C structure}

Given a matrix $B \in \B_0$ with $\langle v_i, v_j \rangle = 0$, its upper-right $r \times r$ submatrix can be put in the form
\begin{equation}\label{e: C}
C =
\left(
\begin{array}{cccccccccccc}
1 & \vline & -1     &   &   & \vline & & \vline & & & \\
 & \vline & & \ddots & & \vline & & \vline & & &  \\
 & \vline & *   & & -1     & \vline & & \vline & & &  \\
\hline
 & \vline & & & & \vline & 1 & \vline & -1    &   &    \\
 & \vline & & & & \vline & & \vline &  & \ddots &   \\
 & \vline & & & & \vline & & \vline & *   & & -1   \\
\hline
 & \vline & * & \cdots & * & \vline & & \vline & * & \cdots & *   \\
 & \vline & * & \cdots & * & \vline & & \vline & * & \cdots & *   \\
\end{array}
\right)
\begin{array}{c}
\\ \\ \\ \\ \\ \\ \\ v_i \\ v_j
\end{array}
\end{equation}
after reordering its rows and columns.  Here a starred entry takes the value 0 or 1, a blank one takes the value 0, and the truncations of rows $v_i$ and $v_j$ are labeled.  Moreover, in each of the four blocks of starred entries appearing in $v_i$ and $v_j$, at least one entry is non-zero.

\end{lem}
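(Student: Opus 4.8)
The plan is to take a matrix $B \in \B_0$ with $\langle v_i, v_j \rangle = 0$ and apply Lemma \ref{l: expand 2}, which tells us that $B$ is built from $M_1$ or $M_2$ by a sequence of expansions of the \emph{first} type only. The key structural fact about a first-type expansion is that it introduces a new row $v_s$ whose support consists of a single $+1$ in a newly added column and a single $-1$ in an existing column (the column carrying the $-1$ of the pattern), together with a modification that leaves all other inner products fixed except raising one pairing by $1$. Crucially, a first-type expansion never creates a $2$ and never alters which rows carry the distinguished entries. So throughout the process the two distinguished entries remain lodged in the rows $v_i$ and $v_j$ (these are the rows carrying the $+1$'s of the original $M_1$ or $M_2$), and every other column acquires its values only through the expansion patterns.

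First I would set up coordinates: reorder rows and columns so that the rows naturally group according to which distinguished row they ``hang off'' during the expansion tree. Because first-type expansions attach a $-1$ into the column beneath either $v_i$ or $v_j$, each non-distinguished row gets associated to exactly one of $v_i, v_j$, partitioning the $r-2$ remaining rows into two blocks. Within each block the expansion process produces exactly the bidiagonal pattern shown in (\ref{e: C}): a $+1$ on the diagonal, a $-1$ just to its right (coming from the newly created column of the expansion), starred entries (values $0$ or $1$) below, and zeros elsewhere. I would verify that the $+1/-1$ bidiagonal structure is forced by tracking, inductively over the number of expansions, where the support of each newly added column lands; the off-diagonal $-1$ is precisely the $-1$ of the expansion pattern, and the starred entries below record accumulated modifications. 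The two bottom rows $v_i$ and $v_j$ collect whatever entries the expansions deposited into the columns belonging to the opposite block, hence the four starred blocks.

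The main obstacle I anticipate is the final assertion: that in \emph{each} of the four starred blocks of $v_i$ and $v_j$ at least one entry is non-zero. The inner-product and squared-norm constraints alone do not obviously force this, so I would argue as follows. The condition $\langle v_i, v_j \rangle = 0$ together with $v_{i1} = -v_{i2} = 1 = -v_{j1} = v_{j2}$ (from Lemma \ref{l: v_i & v_j}) means the first-two-column contribution to $\langle v_i, v_j\rangle$ is $-2$, so the remaining $r$ coordinates must contribute $+2$; since all starred entries are in $\{0,1\}$, this already shows that $v_i$ and $v_j$ share at least two columns where both are $1$, so the starred rows of $v_i$ and $v_j$ cannot be identically zero. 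To upgrade this to non-vanishing in \emph{each} of the four blocks, I would use the graph/connectivity interpretation from the proof of Lemma \ref{l: dist entry}: if, say, the block of $v_i$ lying over $v_j$'s columns were entirely zero, then the column-support partition would disconnect the Goeritz graph $\Gamma$ (the two blocks would fail to communicate through $v_i$ or $v_j$), contradicting the fact that $\Gamma$ is connected (being the white graph of a \emph{knot} diagram, whose determinant is odd and non-zero).

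I would organize the write-up in three short steps: (1) invoke Lemma \ref{l: expand 2} and describe the block partition induced by the expansion tree; (2) establish the bidiagonal shape of $C$ by induction on expansions, pinning down the locations of the $+1$'s, $-1$'s, and starred entries; (3) prove the four non-vanishing claims via the $\langle v_i, v_j\rangle = 0$ count combined with the connectivity of $\Gamma$. The bookkeeping in step (2) is routine but must be done carefully to justify that no stray non-zero entries appear outside the displayed pattern; the genuine content is concentrated in step (3), where the connectivity argument is the crux.
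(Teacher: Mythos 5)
Your proposal rests on a structural claim that is false, and the error removes exactly the mechanism the paper's proof turns on. You assert that a first-type expansion ``never alters which rows carry the distinguished entries,'' so that the two distinguished entries ``remain lodged in the rows $v_i$ and $v_j$'' and the remaining rows organize into chains hanging off $v_i$ and $v_j$. In fact the opposite happens: by Definition \ref{d: expand}, the existing column used in a first-type expansion is one in which $a'$ has a distinguished entry, and after the expansion that column contains three non-zero entries ($a$'s $1$, $v_s$'s $-1$, and the new $1$ given to $b$), while the newly created column contains only the $+1$ of $v_s$. So each expansion transfers a distinguished entry from $a'$ to the new row; the chains grow off the newly created rows, and $v_i,v_j$ can only ever appear in the passive role of $b'$. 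This is visible in the target form (\ref{e: C}) itself: there the two distinguished entries sit in the chain rows (rows $1$ and $k+1$), while $v_i$ and $v_j$ are the all-starred bottom rows, so your picture contradicts the very statement you are trying to prove.

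Because of this, your induction has no way to produce the two blank (zero) blocks---chain-one rows over chain-two columns and vice versa---or to explain why $v_i,v_j$ are the two rows containing no $-1$. The paper supplies exactly this missing step: since an expansion changes only the pairing of the pair $(a',b')$ involved, and since $v_i$ carries no distinguished entry after the first expansion, the only way to lower $\langle v_i,v_j\rangle$ from $1$ to $0$ is a single expansion with $a'=v_j$ and $b'=v_i$; this expansion can be commuted to the outset, after which neither $v_i$ nor $v_j$ ever carries a distinguished entry, and the remaining expansions sort into two chains whose column supports are disjoint (a row of one chain pairs to $0$ with the end of the other chain, hence can never serve as $b'$ there)---this disjointness is precisely what forces the blank blocks. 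Once that structure is in place, the four-block non-vanishing claim is automatic, with no connectivity argument needed: after the two initial expansions, $v_i$ and $v_j$ each carry a $1$ in the first column of each chain, and expansions never erase existing entries. Your substitute argument for that claim is also too weak as stated: connectedness of the white graph $\Gamma$ does not suffice, because $\Gamma$ contains the marked hub vertex, and a configuration in which one chain meets $\{v_i,v_j\}$ only through $v_j$ would still be connected through the hub; one would need that the induced graph on $v_1,\dots,v_r$ is a cycle, hence $2$-connected. Your counting observation that $\langle v_i,v_j\rangle=0$ forces $v_i$ and $v_j$ to share exactly two columns of $1$'s is correct, but by itself it does not place one shared column in each chain, which is what the statement requires.
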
 \noindent For example, the block in the (1,2) position is a lower triangular matrix with $-1$'s on its diagonal and $0$'s and $1$'s below it.

\begin{proof}

Lemma \ref{l: expand 2} asserts that the submatrix $C$ can be obtained by applying a sequence of expansions of the first type to one of \[ \left( \begin{matrix} 1 & 1 \\ 0 & 0 \end{matrix} \right) \begin{array}{c} v_i \\ v_j \end{array} \quad \text{or} \quad \left( \begin{matrix} 1 & 0 \\ 0 & 1 \end{matrix} \right) \begin{array}{c} v_i \\ v_j. \end{array}\]  Let us examine how this can occur.  There is essentially one expansion that we can apply to either of these matrices, and after reordering its rows and columns, the resulting matrix takes the form \[ M_4 = \left( \begin{matrix} 1 & -1 & 0 \\ 0 & 1 & 0 \\ 0 & 1 & 1 \end{matrix} \right) \begin{array}{c} \\ v_i \\ v_j \end{array} \] in either case.  Now, a sequence of expansions will not change the inner product $\langle v_i, v_j \rangle = 1$ until we perform one for which $v_j$ plays the role of $a'$.  Once we perform such an expansion, neither of the resulting vectors $v_i$ or $v_j$ will contain a distinguished entry, so their inner product will remain constant throughout all subsequent expansions.  It follows that in order to obtain a matrix $B$ for which $\langle v_i, v_j \rangle = 0$, the expansion for which $v_j$ plays the role of $a'$ must use $v_i$ in the role of $b'$.  Furthermore, we can perform this expansion at the outset to the matrix $M_4$ to obtain \[ M_5 = \left( \begin{matrix} 1 & \vline & -1 & \vline & 0 & \vline & 0 \\ \hline 0 & \vline & 0 & \vline & 1 & \vline & -1 \\ \hline 0 & \vline & 1 & \vline & 0 & \vline & 1 \\ 0 & \vline & 1 & \vline & 0 & \vline & 1 \end{matrix} \right) \begin{array}{c} \\ \\ v_i \\ v_j, \end{array} \] and perform the remaining expansions in order, and thereby produce the same matrix $C$, up to reordering its rows and columns, as did the original sequence of expansions.  Similarly, we can reorder the remaining expansions, without affecting the resulting matrix $C$ up to its order of rows and columns, in the following way.  The first $k$ expansions have the property that the first row of $M_5$ plays the role of $a'$ in the first expansion, and in each of the next $k-1$, the role of $a'$ is played by the vector $v_s$ from the previous expansion.  Then the remaining $\ell$ expansions have the property that the second row of $M_5$ plays the role of $a'$ in the first such, and in each subsequent one, the role of $a'$ is played by the vector $v_s$ from the previous expansion.  To annotate this process, we append each new row/column pair created by one of the first $k$ expansions to the top/front of the matrix, and each new pair created by one the succeeding $\ell$ expansions to where the first horizontal line/second vertical line appear.  With this order of its rows and columns, the matrix $C$ takes the stated form.

\end{proof}

\subsection{Final\'e.}\label{ss: killer}

At last, suppose that $A$ is a matrix fulfilling the conclusions of Theorem \ref{t: criterion} for an alternating 3-braid knot $K$ with signature $0$, and suppose by way of contradiction that $\langle v_i, v_j \rangle = 0$.  Thus, its upper-right $r \times r$ submatrix $C$ takes the form described by Lemma \ref{l: C structure} after permuting its rows and columns.  By adding non-negative integer multiples of columns $2$ through $k+1$ to the first, and columns $k+3$ through $k + \ell + 2$ to the $(k+2)^{nd}$, we transform $C$ into a matrix $C'$ of equal determinant and of the form

\begin{equation}\label{e: C'}
C' =
\left(
\begin{array}{cccccccccccc}
& \vline & -1     &   &   & \vline & & \vline &  &  &\\
& \vline &  & \ddots & & \vline & & \vline &  &  & \\
& \vline & *   & & -1     & \vline & & \vline &  &  & \\
\hline
& \vline & & & & \vline & & \vline & -1    &   &   \\
& \vline & & & & \vline & & \vline &  & \ddots & \\
& \vline & & & & \vline & & \vline & *   & & -1   \\
\hline
\alpha & \vline & * & \cdots & * & \vline & \beta & \vline & * & \cdots & *  \\
\gamma & \vline & * & \cdots & * & \vline & \delta & \vline & * & \cdots & * \\
\end{array}
\right),
\end{equation} with each of $\alpha, \beta, \gamma, \delta \geq 1$.  Thus, Theorem \ref{t: criterion} implies that \begin{equation}\label{e: det 1} \pm 1 = \det C = \det C' = \pm \det  \left( \begin{matrix} \alpha & \beta \\ \gamma & \delta \end{matrix} \right). \end{equation}

Consider now the penultimate row $x$ of the matrix $A$, with its entries permuted in accordance with the permutation of the columns of $A$ that puts $C$ in the stated form (\ref{e: C}).  Its truncation to its last $r$ entries is a vector $\overline{x}$ which is orthogonal to the first $r-2$ rows of $C$.  By Lemma \ref{l: C structure}, it follows that this vector takes the form \[ \overline{x} = (t, m_1 t, \dots, m_k t, u, n_1 u, \dots, n_\ell u) \] for some integers $t,u$ and {\em positive} integers $m_1,\dots,m_k,n_1,\dots,n_\ell$.  The sequence of column operations that transforms $C$ into $C'$ carries $\overline{x}$ to a corresponding  vector $\overline{x}' = (t,*,\dots,*,u,*,\dots,*)$ for which $C \overline{x} = C' \overline{x}'$, which together with (\ref{e: C}) shows that in fact $\overline{x}' = (t,0,\dots,0,u,0,\dots,0)$. Since $x$ is orthogonal to the first $r$ rows of $A$, we obtain $C \overline{x} = (0,\dots,0,1,-1)^T$. It follows that \[ \left( \begin{matrix} \alpha & \beta \\ \gamma & \delta \end{matrix} \right) \left( \begin{matrix} t \\ u \end{matrix} \right) = \pm \left( \begin{matrix} 1 \\ -1 \end{matrix} \right),\] whence, by (\ref{e: det 1}), \[ \left( \begin{matrix} t \\ u \end{matrix} \right) = \pm \left( \begin{matrix} \delta & -\beta \\ -\gamma & \alpha \end{matrix} \right) \left( \begin{matrix} 1 \\ -1 \end{matrix} \right) = \pm \left( \begin{matrix} \delta + \beta \\ -(\gamma + \alpha) \end{matrix} \right).\]  However, in order for the vector $x$ to obey the change-making condition (\ref{e: change}), we must have \[ \min \{ |t|, |m_1 t|, \dots, |m_k t|, |u|, |n_1 u|, \dots, |n_\ell u| \}| \leq 1,\] while the left-hand side reduces to $\min \{ |t|, |u| \} = \min \{ \delta + \beta, \gamma + \alpha \} \geq 2$.  Hence $x$ fails the change-making condition.  It follows that no there is no matrix $A$ for which $\langle v_i, v_j \rangle = 0$ and which fulfills the conclusions of Theorem \ref{t: criterion} for an alternating 3-braid knot $K$ with $\sigma(K) = 0$.  This establishes Claim \ref{claim: (v_i,v_j) = 1}, and completes the proof of Theorem \ref{t: criterion}.


\section{Conclusion.}\label{s: conc}

\subsection{Non-alternating 3-braid knots.}

A slight modification of Theorem \ref{t: criterion} applies to any knot $K$ whose branched double-cover is an L-space which bounds a sharp 4-manifold $X$.  The relevant change is that the Goeritz matrix $G_K$ must be replaced by a matrix representing the intersection pairing $Q_X$.  In particular, it applies to at least one of $K$ and $\overline{K}$ for a 3-braid knot $K$ for which $d= \pm 1$ when put in normal form, although we do not elaborate on the construction of $X$ here.  Granted this fact, we can try to proceed exactly as with the case of an alternating 3-braid knot.  The story begins to unfold as in Section \ref{s: sig 0}, and we confront a combinatorial problem analogous to describing the set $\B_0$.  However, things quickly grow complicated.  For example, the analogous building blocks to $M_1,M_2,M_3$ are much more numerous, and no simple way for enumerating and handling them all emerged to this author.  In principle, this is a tractable problem, and with enough effort we could hope to prove the following result.

\begin{conj}\label{conj: 3braids}

Suppose that $K$ is a 3-braid knot with unknotting number one.  Then $K$ contains an unknotting crossing in normal form, and $|d| \leq 1$.

\end{conj}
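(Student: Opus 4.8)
The plan is to imitate the proof of Theorem~\ref{thm: main}, with the intersection pairing of a sharp filling playing the role of the Goeritz form throughout. First I would reflect $K$ if necessary so that $\sigma(K) \geq 0$; both assertions of the Conjecture are invariant under mirroring, which sends $d \mapsto -d$ and carries unknotting crossings to unknotting crossings, so this costs nothing. Proposition~\ref{p: d bound} then confines $d$ to $\{-1,0,1,2\}$, and the case $d = 0$, in which $K$ is alternating, is precisely Theorem~\ref{thm: main}. The remaining task is therefore to treat $d = \pm 1$ by exhibiting an unknotting crossing, and to eliminate $d = 2$; together these give both conclusions of the Conjecture.

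For $d = \pm 1$ the knot $K_d$ is quasi-alternating \cite[Theorem 8.7]{B}, so $\Sigma(K)$ is an L-space, and as indicated above the double cover of $K$ (or of $\overline{K}$) bounds a sharp negative-definite $4$-manifold $X$ whose intersection pairing $Q_X$ is carried by an explicit negative-definite plumbing generalizing the cyclic white graph of the alternating case. The modified form of Theorem~\ref{t: criterion} then furnishes an integer matrix $A$ with $-A A^T = Q_X \oplus R_n$ subject to the change-making condition~(\ref{e: change}) and $\det C = \pm 1$. I would then run the characteristic-vector argument of Sections~\ref{s: sig 2} and~\ref{s: sig 0}: assemble a distinguished characteristic vector out of the rows $v_i$ (together with $y$ when $\sigma = 0$), compute its square to be $-N$ with $N = \rk Q_X + 2$, and deduce after negating basis vectors that it equals $\mathbf{1}$; reading off the rows meeting the first two columns then pins down the shape of $A$. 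As before, the aim is to show that the embedding locates a crossing whose change yields a $3$-braid knot of determinant one, which, by the requisite generalization of Proposition~\ref{p: almost-alt} characterizing such diagrams, is the unknot, so that the crossing is an unknotting crossing in normal form.

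The case $d = 2$ I would split according to the direction of the crossing change: the subcase $\sigma(K) = 0$ with a negative-to-positive change is excluded at once by the signed bounds on $\sigma$ and the Rasmussen invariant recalled in Propositions~\ref{p: d bound} and~\ref{p: s}, while the surviving subcases must be ruled out by the refined embedding obstruction once the corresponding sharp filling is available. In every instance the analysis reduces, just as in Subsections~\ref{ss: B_0}--\ref{ss: killer}, to the admissible matrices $B$ with $-B B^T = Q_X \oplus (-2)$ and unit row-sums: one proves a column-pattern lemma in the spirit of Lemma~\ref{l: B columns}, isolates a finite list of building blocks generalizing $M_1, M_2, M_3$, and characterizes all such $B$ through contraction and expansion, the final contradiction (for $d = 2$) or localization of the crossing (for $d = \pm 1$) again resting on the change-making condition~(\ref{e: change}).

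The hard part is precisely the enumeration and bookkeeping of these building blocks, together with the prior construction of the sharp fillings $X$ beyond the alternating case. Where the alternating plumbing is a single cycle with spokes and collapses under contraction to just $M_1, M_2, M_3$, the richer graphs arising here yield a substantially larger and less uniform family of minimal matrices, and controlling the interaction of the distinguished rows $v_i$ and $v_j$ with this combinatorics is the delicate point. I expect that organizing the seeds according to the structure of the underlying plumbing graph, rather than treating them ad hoc, will be the key to making the problem tractable; once they are classified, the concluding change-making contradiction should go through essentially as in Subsection~\ref{ss: killer}.
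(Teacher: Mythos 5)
The statement you are trying to prove is, in the paper itself, exactly that: a conjecture (Conjecture \ref{conj: 3braids}), which the paper explicitly does not prove, and your proposal does not prove it either. What you have written is essentially a restatement of the paper's own roadmap from Section \ref{s: conc}: reduce to $d \in \{-1,0,1,2\}$ via Proposition \ref{p: d bound}, dispose of $d = 0$ by Theorem \ref{thm: main}, and attack $d = \pm 1$ and $d = 2$ with a modified Theorem \ref{t: criterion} followed by a contraction/expansion analysis of admissible embedding matrices. The genuine gap is that every hard step in this plan is deferred rather than executed. First, the construction of the sharp negative-definite filling for $d = \pm 1$ is only asserted, not carried out (the paper likewise declines to elaborate on it). Second, for $d = 2$ the space $\Sigma(K)$ is \emph{not} an L-space (it has $\rk \, HF_{red} = 1$), so the modified criterion does not apply as you state it; one must first prove an analogue of Theorem \ref{t: symmetry} using \cite[Theorem 6.2]{B} and the argument of \cite[Section 4]{OSunknotting}, a point your proposal glosses over by speaking of ``the corresponding sharp filling'' as though the L-space hypothesis were available. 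Third, and decisively, the enumeration of the building blocks generalizing $M_1, M_2, M_3$ and the ensuing case analysis --- the step on which your entire endgame rests --- is precisely the step the author reports could not be completed (``an avalanche of case analysis halted this author's progress''; ``no simple way for enumerating and handling them all emerged''). Writing that you ``expect'' organizing the seeds by the plumbing graph ``will be the key'' is a research program, not an argument: nothing in your proposal shows the list of seeds is finite in a usable sense, nor that the distinguished rows $v_i, v_j$ interact with them so as to force the change-making contradiction.

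To be fair, the skeleton is the right one, and your observation that the subcase $d = 2$, $\sigma(K) = 0$, unknotted by a negative-to-positive change, is excluded by the signature and $s$-invariant bounds is correct and consistent with the proof of Proposition \ref{p: d bound} (which shows $d = 2$ forces $\sigma(K) = 2$ in that situation). But since the combinatorial core is untouched, the proposal establishes nothing beyond what Theorem \ref{thm: main} and Proposition \ref{p: d bound} already give, and the statement remains open --- as the paper says it does.
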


We note that there certainly do exist 3-braid knots with unknotting number one and $d = \pm 1$, such as $8_{20}$ and $8_{21}$.  Comparing with Proposition \ref{p: d bound}, Conjecture \ref{conj: 3braids} asserts that $d = 2$ is not a possibility.  Let us provide some justification for this assertion.  Suppose that $K$ is the closure of $h^2 \cdot w$ with $w$ an alternating word, and let $K_0$ denote the closure of $w$.  The space $Y = \Sigma(K)$ is obtained by $(-1)$-surgery on a suitable null-homologous knot in $\Sigma(K_0)$.  We perform the corresponding handle attachment to $\Sigma(K_0) = \del X_{K_0}$, and in this way produce a negative definite 4-manifold with boundary $Y$.  While $Y$ is not an L-space, it is nearly so, in that $\rk \; HF_{red}(Y) = 1$, and its correction terms can be determined from those of $\Sigma(K_0)$ \cite[Theorem 6.2]{B}.  The argument of \cite[Section 4]{OSunknotting} then pushes through in this case to give an analogous statement to Theorem \ref{t: symmetry}.  Thus, we can try to proceed in this case, just as with $|d| \leq 1$.  However, as with the case $d = \pm 1$ just discussed, an avalanche of case analysis halted this author's progress.  Nevertheless, we expect this analysis to show that there is no 3-braid knot with $d=2$ and unknotting number one.  As an exercise, the reader may try to argue that the normal form of such a knot cannot contain an unknotting crossing.  In this vein, we pose a question.  According to \cite[Proposition 1.6]{B}, a 3-braid knot $K$ with 4-ball genus $g_4(K)=0$ must have $|d| \leq 1$ (which follows, alternatively, by combining the calculations of $\sigma$ and $s$ in Propositions \ref{p: sig} and \ref{p: s}).

\begin{q}

If $K$ is a 3-braid knot and $g_4(K)=1$, does it follow that $|d| \leq 1$?

\end{q}  \noindent If so, this would directly yield the last assertion of Conjecture \ref{conj: 3braids}.

\subsection{Quasi-alternating links and sharp 4-manifolds.}

Let $Y$ denote the branched double-cover of a quasi-alternating link.  This space bounds a negative definite 4-manifold with vanishing $H_1$, and one might be inclined to believe that it necessarily bounds a sharp 4-manifold.  However, Lemma \ref{l: d diff 2} can be used to show that this is not always the case.

\begin{prop}\label{p: non-qa}

The branched double-cover of the knot $\overline{8}_{20}$ does not bound a sharp 4-manifold.

\end{prop}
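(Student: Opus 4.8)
The plan is to suppose, for contradiction, that $\Sigma(\overline{8}_{20})$ bounds a sharp $4$-manifold and to run the machinery of Lemma \ref{l: d diff 2} to a numerical contradiction, in exactly the spirit of the analysis of $10_{79}$ in Subsection \ref{ss: example}. First I would record the relevant data: $8_{20}$ is quasi-alternating with $\sigma(8_{20}) = 0$ and $\det(8_{20}) = 9 = 2 \cdot 5 - 1$, so $\Sigma(8_{20})$ is an L-space, and since $u(8_{20}) = 1$ the signed Montesinos trick (Proposition \ref{p: Mont}) presents one orientation as $S^3_{-9/2}(\kappa)$. With the orientation conventions of Section \ref{s: criterion} (and $\sigma = 0$, so $\epsilon = 1$), this is $Y := \Sigma(8_{20}) = S^3_{-9/2}(\kappa)$, whence the space we wish to obstruct is precisely $-Y = \Sigma(\overline{8}_{20})$. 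I would take $Y' = S^3_{-9/2}(U)$ with its sharp trace $W' = X_{T_5}$, note that $R_5$ of \eqref{e: R_n} is the relevant matrix, and check that hypothesis \eqref{e: moot} holds: $d(\Sigma(8_{20}),0) = -\sigma(8_{20})/4 = 0 = d(S^3_{-9/2}(U),0)$, the latter because $9 \equiv 1 \pmod 4$. Thus Theorem \ref{t: symmetry} is available.

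Now suppose that $-Y = \Sigma(\overline{8}_{20})$ bounds a sharp $4$-manifold $X$. The hypotheses preceding Lemma \ref{l: d diff 2} are then met with $n = 5$, so equality holds in \eqref{e: d diff 2} for every $\t \in \Spc(Y) \cong \cok(R_5) \cong \Z/9\Z$. I would then reproduce the odd case $n = 2k+1$ of the proof of Theorem \ref{t: criterion}: Donaldson's theorem identifies $H^2(X \cup_Y W)$ with a standard negative-definite $\Z^N$; the maximizers tabulated in Table \ref{table: maximizers} force the last two rows of an embedding matrix $A$ into the normal form \eqref{e: x and y}; and the equality $S = S'$ supplied by Theorem \ref{t: symmetry} forces the entries $x_3,\dots,x_N$ of the row $x$ to be non-negative and to obey the change-making condition \eqref{e: change}. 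The crucial feature is that the set
\[ S = \{\, 0 \le 2j \le 2k \; : \; d(S^3_{-9/2}(\kappa),j) = d(S^3_{-9/2}(U),j) \,\} \]
is determined purely by the correction terms of $\Sigma(8_{20})$ and $S^3_{-9/2}(U)$, independently of the hypothetical filling $X$, and that sharpness forces $S$ to consist of \emph{exactly} the even integers in an interval $[0, S_{\max}]$.

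The heart of the argument is therefore the explicit computation of these correction terms and the verification that the resulting $S$ cannot have the required form. I would compute $d(\Sigma(8_{20}),\cdot)$ from a negative-definite plumbing bounding the small Seifert fibered space $\Sigma(8_{20})$ (equivalently from its quasi-alternating structure), read off $d(S^3_{-9/2}(U),\cdot)$ from the maximizers $(2i+1,-2)$, $(2i-1,2)$ (for $|i|\le k$) and $(2i-9,0)$ (for $k < |i| \le n$) of Table \ref{table: maximizers}, and compare the two lists over $\Z/9\Z$. I expect the computation to reveal that $S$ omits some even value below its maximum, i.e. that it is not an initial segment of the even numbers; equivalently, the row $x$ extracted from the maximizers violates \eqref{e: change}. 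This is precisely the phenomenon exhibited by $10_{79}$, and it contradicts the structure just shown to be forced by sharpness, so no sharp $X$ can exist.

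I expect the main obstacle to be twofold. The genuinely delicate part is the correction-term computation together with the orientation and $\spc$-labeling bookkeeping needed to identify $\Sigma(8_{20})$ with $S^3_{-9/2}(\kappa)$ and to match the $\Z/9\Z$ indexing of $\cok(R_5)$ against the plumbing calculation; a sign error there would propagate through $S$. The conceptual content, by contrast, is entirely contained in Lemma \ref{l: d diff 2} and the proof of Theorem \ref{t: criterion}. The only novelty is that the hypothesis $u(K) = 1$, which in Theorem \ref{t: criterion} supplies the sharp filling of $\Sigma(K)$, is here replaced by the \emph{hypothetical} sharpness of a filling of the reversed orientation $\Sigma(\overline{8}_{20})$; the asymmetry between the two orientations, dictated by $\Sigma(8_{20})$ rather than its mirror being the negative surgery, is exactly what allows the obstruction to fire for $\overline{8}_{20}$ even though $8_{20}$ itself has unknotting number one.
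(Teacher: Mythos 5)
Your strategy cannot yield the contradiction you are expecting, and the reason is structural, not a matter of bookkeeping. Once the filling $X$ of $\Sigma(\overline{8}_{20})$ is merely hypothetical, its intersection form $Q_X$ (even its rank) is unknown, so you cannot enumerate embedding matrices as in the $10_{79}$ example; the only consequences of Lemma \ref{l: d diff 2} you can actually test are the purely numerical ones, namely that $d(S^3_{-9/2}(U),\t) \le d(\Sigma(8_{20}),\t)$ for every $\t$ and that the equality set $S=S'$ is an initial segment of even integers compatible with a change-making row. But these conditions are automatic for any knot of unknotting number one with the appropriate signs --- they record the existence of the half-integer surgery description of $\Sigma(8_{20})$, not the existence of $X$ --- and $8_{20}$ \emph{does} have unknotting number one. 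Concretely, the knot $\kappa$ supplied by the Montesinos trick here is a trefoil, and the rational surgery formula (torsion coefficients $t_0=1$, $t_j=0$ for $j\ge 1$) shows that $d(\Sigma(8_{20}),\t)-d(S^3_{-9/2}(U),\t)$ equals $2$ on exactly one conjugate pair of $\spc$ structures and $0$ on the remaining seven; Theorem \ref{t: symmetry}, which holds unconditionally in this situation, forces that pair to sit at $\pm 2$ in the $\Z/9\Z$-labeling, so that $S=S'=\{0,2\}$. This is a perfectly good initial segment, realizable by the change-making row $x=(0,1,1,0,\dots,0)$, so no contradiction appears: the computation you deferred (``I expect the computation to reveal that $S$ omits some even value'') comes out the opposite way. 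The orientation issue you flag is real as well --- you never verify which mirror of $8_{20}$ unknots by changing a negative crossing, hence whether your machinery is even aimed at $\Sigma(\overline{8}_{20})$ rather than $\Sigma(8_{20})$ --- but it is moot, since the numerical test is vacuous for either orientation.

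The paper's proof inverts the roles of known and unknown. It uses a \emph{known} sharp filling of $Y=\Sigma(8_{20})$, the negative definite plumbing of Figure \ref{f: plumbing} (sharp by \cite[Theorem 1.2]{OSplumbed}), as both $W$ and $W'$ in Lemma \ref{l: d diff 2}. The right-hand side of (\ref{e: d diff 2}) is then identically zero, so sharpness of a hypothetical $X$ with $\del X = -Y$ forces every class of $\cok(M)$ to be represented by a vector in $\{\pm 1\}^n$ under an embedding of the \emph{known} plumbing form $M$ into the standard diagonal lattice. Since $M$ admits only two such embeddings up to automorphism ($A_1$ for $n=5$, and additionally $A_2$ for $n>5$), this is a finite check, and it fails. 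The lesson is that this obstruction lives at the level of lattice embeddings of a known form, and that form must sit on the side opposite the hypothetical filling; your setup places the unknown form $Q_X$ there, leaving only correction-term conditions that $u(8_{20})=1$ guarantees are satisfied.
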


\noindent  Proposition \ref{p: non-qa} is a negative result, and begs for an effecient means of calculating the correction terms of the branched double-cover of a quasi-alternating link in general.  We remark that the space in question is the result of $(-9)$-surgery on the right-hand trefoil, whose associated trace of surgery is a negative definite 4-manifold with vanishing $H_1$.

\begin{proof}

\begin{figure}
\centering
\includegraphics[width=2.5in]{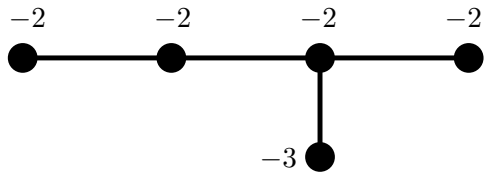}
\put(-180,55){$-2$}
\put(-125,55){$-2$}
\put(-70,55){$-2$}
\put(-15,55){$-2$}
\put(-85,2){$-3$}
\caption{Plumbing along this graph gives a sharp 4-manifold with boundary $\Sigma(8_{20})$.}  \label{f: plumbing}
\end{figure}

The knot $8_{20}$ is the pretzel knot $P(3,-3,2)$, so the space $Y = \Sigma(8_{20})$ is the boundary of plumbing on the graph shown in Figure \ref{f: plumbing}.  This is a sharp 4-manifold according to \cite[Theorem 1.2]{OSplumbed}.  The associated intersection pairing is

\[
M =
\left(
\begin{matrix}
-2 &  1 &  0 &  0 &  0 \\
 1 & -2 &  1 &  0 &  0 \\
 0 &  1 & -2 &  1 &  1 \\
 0 &  0 &  1 & -2 &  0 \\
 0 &  0 &  1 &  0 & -3
\end{matrix}
\right),
\] which it is easy to check has a unique embedding, up to automorphism, into $-\Z^5$, and two embeddings into $-\Z^n$ for all $n > 5$; the embedding matrices are

\[
A_1 =
\left(
\begin{matrix}
 1 & -1 &  0 &  0 &  0 & 0 & \cdots & 0 \\
 0 &  1 & -1 &  0 &  0 & 0 & \cdots & 0 \\
 0 &  0 &  1 & -1 &  0 & 0 & \cdots & 0 \\
 0 &  0 &  0 &  1 & -1 & 0 & \cdots & 0 \\
-1 & -1 & -1 &  0 &  0 & 0 & \cdots & 0
\end{matrix}
\right)
\quad \text{and} \quad
A_2 =
\left(
\begin{matrix}
 1 & -1 &  0 &  0 &  0 & 0 & 0 & \cdots & 0 \\
 0 &  1 & -1 &  0 &  0 & 0 & 0 & \cdots & 0 \\
 0 &  0 &  1 & -1 &  0 & 0 & 0 & \cdots & 0 \\
 0 &  0 &  0 &  1 & -1 & 0 & 0 & \cdots & 0 \\
 0 &  0 &  0 &  1 &  1 & 1 & 0 & \cdots & 0
\end{matrix}
\right).
\]  Suppose by way of contradiction that $\Sigma(\overline{8_{20}}) = -Y = \del X$, with $X$ sharp.  By Lemma \ref{l: d diff 2}, it follows that there exists some $n \geq 5$ and value $i \in \{ 1,2 \}$ such that for every class $\t \in \cok(M)$, we can find a vector $\alpha \in \{ \pm 1 \}^n$ for which $[A_i \cdot \alpha] = \t$.  However, it is straightforward to check that there is no such $\alpha$ corresponding to $20$ of the $25$ classes $\t$ in case of $A_1$, and to $\t = 0$ in case of $A_2$.

\end{proof}

\subsection{Further speculation.}

If $K$ is an alternating knot with unknotting number one, then the knot $\kappa$ guaranteed by the Montesinos trick is a knot admitting an L-space surgery, or an {\em L-space knot}, for short.  L-space knots are very special.  For example, the Alexander polynomial and knot Floer homology of an L-space knot are highly constrained \cite{OSlens}, and the knot must be fibered \cite{Ghiggini,Juhasz,Ni}.  Furthermore, there is a conjecturally complete list, due to Berge, of the knots admitting a lens space surgery \cite{Berge}.  Inspired by this body of work, it seems plausible that a deeper understanding of the topology of L-space knots could ultimately lead to their classification.  According to this line of thought, the constraints on the knot $\kappa$ might be so strong that it must arise in correspondence to an unknotting crossing in an alternating diagram of $K$, and thereby establish Conjecture \ref{conj: main}.


At a more approachable level, we may ask to what extent Theorem \ref{t: criterion} captures the full strength of the symmetry stated in Theorem \ref{t: symmetry}.  As it stands, the proof of Theorem \ref{t: criterion} only makes use of Equation (\ref{e: symmetry}) in the event that the differences therein vanish.  This seems like a tractable combinatorial problem, and its resolution could be useful in understanding Question \ref{q: main}.


\begin{thebibliography}{99}

\bibitem{B}

	J.A. Baldwin.  Heegaard Floer homology and genus one, one boundary component open books. {\em J. Topology} 1 (4) (2008)  963-992.

\bibitem{Berge}

	J. Berge.  Some knots with surgeries yielding lens spaces.  {\em Unpublished manuscript}.

\bibitem{knotinfo}

	J. C. Cha and C. Livingston. KnotInfo: Table of Knot Invariants, {\tt http://www.indiana.edu/~knotinfo}, December 10, 2008.

\bibitem{CL}

	T. D. Cochran and W. B. R. Lickorish.  Unknotting information from 4-manifolds.  {\em Trans. Amer. Math. Soc.} 297 (1986) 125-142.

\bibitem{D}

	S.K. Donaldson.  The orientation of {Y}ang-{M}ills moduli spaces and 4-manifold topology. {\em J. Diff. Geom.} 26(3) (1987) 397-428.

\bibitem{E}

	D. Erle.  Calculation of the signature of a 3-braid link.  {\em Kobe J. Math.} 16(2) (1999) 161-175.

\bibitem{Ghiggini}

	P. Ghiggini.  Knot Floer homology detects genus-one fibred knots.  {\em Amer. J. Math.}  130 no. 5 (2008) 1151-1169.

\bibitem{GLi}

	C. McA. Gordon and R. Litherland.  On the signature of a link. {\em Invent. Math.} 47 (1978) 53-69.

\bibitem{GLu}

	C. McA. Gordon, J. Luecke.  Knots with unknotting number 1 and essential Conway spheres. {\em Alg. Geom. Top.}  6  (2006) 2051-2116.

\bibitem{GJ}

	J. Greene and S. Jabuka.  The slice-ribbon conjecture for 3-stranded pretzel knots. {\tt arXiv:0706.3398} (2007).

\bibitem{Jabuka}

	S. Jabuka. The Witt unknotting number of a knot. {\em To appear}.

\bibitem{Juhasz}

	A. Juh\'asz. Floer homology and surface decompositions.  {\em Geom. Top.}  12 no. 1 (2008) 299-350.

\bibitem{KM}

    T. Kanenobu and H. Murakami, 2-bridge knots of unknotting number one. {\em Proc. Amer. Math. Soc.} 96 no.3 (1986), 499-502.

\bibitem{Kauffman}

	L. Kauffman.  State models and the Jones polynomial.  {\em Topology} 26 (1987) 395-407.

\bibitem{Kobayashi}

    T. Kobayashi, Minimal genus Seifert surfaces for unknotting number 1 knots. {\em Kobe J. Math} 6 (1989) 53-62.

\bibitem{Kohn}

	P. Kohn.  Two-bridge links with unlinking number one.  {\em Proc. Amer. Math. Soc.} 113 no. 4 (1991) 1135-1147.

\bibitem{Lick}

	W.B.R. Lickorish.  \emph{An introduction to knot theory}, Graduate Texts in Math. 175, Springer, New York (1997).

\bibitem{Lisca}

	P. Lisca.  Lens spaces, rational balls and the ribbon conjecture. {\em Geom. Top.} 11 (2007) 429-472

\bibitem{Liv}

	C. Livingston.  Computations of the Ozsv\'ath-Szab\'o knot concordance invariant.  {\em Geom. Top.} 8 (2004) 735-742.

\bibitem{MOw}

	C. Manolescu and B. Owens.  A concordance invariant from the Floer homology of double branched covers. {\em Int. Math. Res. Not.} {\tt doi:10.1093/imrn/rnm077} (2007).

\bibitem{MOzs}

	C. Manolescu and P. Ozsv\'ath.  On the Khovanov and knot Floer homologies of quasi-alternating links. {\em Proc. Fourteenth G\"{o}kova Geom. Top. Conf.} (2007) 60-81.

\bibitem{MenThis}

	W. Menasco and M. Thistlethwaite.  The Tait flyping conjecture.  {\em Bull. Amer. Math. Soc.}  25 (1991) 403-412.

\bibitem{Monty}

    J.M. Montesinos. Surgery on links and double branched covers of $S^3$. in: Knots, Groups, and 3-manifolds (Papers dedicated to the memory of R. H. Fox), {\em Ann. Math. Studies}, No. 84 (1975) 227–259.

\bibitem{Msig}

	K. Murasugi.  On a certain numerical invariant of link types.  {\em Trans. Amer. Math. Soc.} 117 (1965) 387-422.

\bibitem{M3braid}

	\underline{\hspace{.5in}}. {\em On closed 3-braids.}  Mem. Amer. Math. Soc. 151 (1974).

\bibitem{Malt}

	\underline{\hspace{.5in}}. Jones polynomials and classical conjectures in knot theory.  {\em Topology}  26  (1987) 187-194.

\bibitem{Ni}

	Y. Ni. Knot Floer homology detects fibred knots. {\em Invent. Math.}  170 no. 3 (2007) 577-608.

\bibitem{OSgrading}

	P. Ozsv\'ath and Z. Szab\'o.  Absolutely graded Floer homologies and intersection forms for four-manifolds with boundary. {\em Adv. Math.} 173 (2003) 179-261.

\bibitem{OStau}

	\underline{\hspace{.5in}}.  Knot Floer homology and the four-ball genus. \emph{ Geom. Top.} 7 (2003) 615-639.

\bibitem{OSplumbed}

	\underline{\hspace{.5in}}.  On the Floer homology of plumbed three-manifolds.  \emph{Geom. Top.} 7 (2003) 185-224.

\bibitem{OSlens}

    \underline{\hspace{.5in}}.  On knot Floer homology and lens space surgeries.  {\em Topology} 44 (2005) 1281-1300.

\bibitem{OSdoublecover}

    \underline{\hspace{.5in}}.  On the Heegaard Floer homology of branched double-covers.  \emph{Adv. Math.} 194 (2005) 1-33.

\bibitem{OSunknotting}

	\underline{\hspace{.5in}}.  Knots with unknotting number one and Heegaard Floer homology. {\em Topology} 44 (2005) 705-745.

\bibitem{R}

	J. Rasmussen.  Khovanov homology and the slice genus.  {\tt math.GT/0402131} (2004).

\bibitem{Staron}

    E. Staron.  {\em Private communication.} (February 2009).

\bibitem{Stanley}

    R. Stanley. {\em Enumerative Combinatorics, Volume 2.} Cambridge Studies in Adv. Math. 62, Cambridge Univ. Press (1999).

\bibitem{Stoim}

	A. Stoimenow.  Some examples related to 4-genera, unknotting numbers and knot polynomials.  {\em J. London Math. Soc.} (2) 63 (2001) 487-500.

\bibitem{Thistle}

	M. B. Thistlethwaite.  A spanning tree expansion of the Jones polynomial.  {\em Topology}  26  (1987) 297-309.

\bibitem{T}

	T. Tsukamoto.  The almost alternating diagrams of the trivial knot. {\tt math.GT/0605018} (2006).

\bibitem{V}

	C. A. Van Cott.  Ozsv\'ath-Szab\'o and Rasmussen invariants of cabled knots. {\tt arXiv:0803.0500} (2008).

\end{thebibliography}
\end{document}